\newenvironment{acks}{\subsection*{Acknowledgements}}
\title{Efficient Online Linear Control with \\ Stochastic Convex Costs and Unknown Dynamics}
\author{%
Asaf Cassel%
\thanks{School of Computer Science, Tel Aviv University; \texttt{acassel@mail.tau.ac.il}.}
\and
Alon Cohen%
\thanks{School of Electrical Engineering, Tel Aviv University, and Google Research, Tel Aviv; \texttt{alonco@tauex.tau.ac.il}.}
\and
Tomer Koren%
\thanks{School of Computer Science, Tel Aviv University, and Google Research, Tel Aviv; \texttt{tkoren@tauex.tau.ac.il}.}
}
\newcommand{\floor}[2][*]{\delim\lfloor\rfloor#1{#2}}
\newcommand{\indEvent}[2][*]{\mathds{1}_{\brk[c]#1{#2}}}
\newcommand{\tr}[2][*]{\mathrm{Tr}\brk*{#2}}
\newcommand{\tran}{^{\mkern-1.5mu\mathsf{T}}}
\newcommand{\EE}[1][]{\mathbb{E}_{#1}}
\newcommand{\RR}[1][]{\mathbb{R}^{#1}}
\DeclareMathOperator*{\argmin}{arg\,min}
\DeclarePairedDelimiterX\setDef[1]\lbrace\rbrace{#1}
\newcommand{\seqDef}[3]{\brk[c]*{#1}_{#2}^{#3}}
\declaretheoremstyle[
	    spaceabove=\topsep, 
	    spacebelow=\topsep, 
	    headfont=\normalfont\bfseries,
	    bodyfont=\normalfont\itshape,
	    notefont=\normalfont\bfseries,
	    notebraces={(}{)},
	    postheadspace=0.33em, 
	    headpunct={.},
    ]{theorem}
\declaretheorem[style=theorem]{theorem}
\declaretheoremstyle[
	    spaceabove=\topsep, 
	    spacebelow=\topsep, 
	    headfont=\normalfont\bfseries,
	    bodyfont=\normalfont,
	    notefont=\normalfont\bfseries,
	    notebraces={(}{)},
	    postheadspace=0.33em, 
	    headpunct={.},
    ]{definition}
\declaretheoremstyle[
        spaceabove=\topsep, 
        spacebelow=\topsep, 
        headfont=\normalfont\bfseries,
        bodyfont=\normalfont,
        notefont=\normalfont\bfseries,
        notebraces={}{},
        postheadspace=0.33em, 
        qed=$\blacksquare$, 
        headpunct={.},
    ]{proofstyle}
\declaretheorem[style=proofstyle,numbered=no,name=Proof]{proof}
\declaretheorem[style=theorem,sibling=theorem,name=Lemma]{lemma}
\declaretheorem[style=theorem,numbered=no,name=Theorem]{theorem*}
\declaretheorem[style=theorem,numbered=no,name=Lemma]{lemma*}
\declaretheorem[style=theorem,numbered=no,name=Corollary]{corollary*}
\declaretheorem[style=theorem,numbered=no,name=Proposition]{proposition*}
\declaretheorem[style=theorem,numbered=no,name=Claim]{claim*}
\declaretheorem[style=theorem,numbered=no,name=Fact]{fact*}
\declaretheorem[style=theorem,numbered=no,name=Observation]{observation*}
\declaretheorem[style=theorem,numbered=no,name=Conjecture]{conjecture*}
\declaretheorem[style=definition,sibling=theorem,name=Definition]{definition}
\declaretheorem[style=definition,numbered=no,name=Definition]{definition*}
\declaretheorem[style=definition,numbered=no,name=Remark]{remark*}
\declaretheorem[style=definition,numbered=no,name=Example]{example*}
\declaretheorem[style=definition,numbered=no,name=Question]{question*}
\newcommand{\pOCOoptimism}{\alpha}
\newcommand{\at}[1][t]{a_{#1}}
\newcommand{\yt}[1][t]{y_{#1}}
\newcommand{\lt}[1][t]{\ell_{#1}}
\newcommand{\ocoModel}{Q}
\newcommand{\ocoEstModel}{\smash{\widehat{Q}}}
\newcommand{\ocoTrueModel}{Q_{\star}}
\newcommand{\ocoCostVar}{\sigma_{\lt[]}}
\newcommand{\ocozz}[1][]{\zeta_{#1}}
\newcommand{\ocowt}[1][t]{w_{#1}}
\newcommand{\ocoSet}{\mathcal{S}}
\newcommand{\ocoDiam}{R_a}
\newcommand{\din}{d_a}
\newcommand{\dout}{d_y}
\newcommand{\ocoMaxNoise}{W}
\newcommand{\ocoModelDiam}{R_Q}
\newcommand{\ocoR}[1]{R_{#1}}
\newcommand{\LL}{\mu}
\newcommand{\qq}{q}
\newcommand{\LLofu}[1][t]{\bar{\mu}_{#1}}
\newcommand{\ocoTotDiam}{D_q}
\newcommand{\ut}[1][t]{u_{#1}}
\newcommand{\xt}{x_t}
\newcommand{\xtpi}{x_t^{\pi}}
\newcommand{\utpi}{u_t^{\pi}}
\newcommand{\maxF}{C_{f}}
\newcommand{\pLipW}{G_w}
\newcommand{\tauI}[1][i]{\tau_{#1,1}}
\newcommand{\tauIJ}[1][j]{\tau_{i,#1}}
\newcommand{\pRegTheta}{\lambda_\model}
\newcommand{\pRegW}{\lambda_w}
\newcommand{\pOptimism}{\alpha}
\newcommand{\dx}{d_x}
\newcommand{\du}{d_u}
\newcommand{\Astar}{A_{\star}}
\newcommand{\Bstar}{B_{\star}}
\newcommand{\RxuMax}{R_{\max}}
\newcommand{\nEpochs}{N}
\newcommand{\nSubEpochs}[1][i]{N_{#1}}
\newcommand{\wErr}{C_w}
\newcommand{\zt}[1][t]{\zeta_{#1}}
\newcommand{\costVar}{\sigma_c}
\newcommand{\maxNoise}{W}
\newcommand{\RM}{R_{\mathcal{M}}}
\newcommand{\Bbound}{R_B}
\newcommand{\ww}{{w}}
\newcommand{\wwhat}{\hat{w}}
\newcommand{\obsOp}{P}
\newcommand{\obs}{\rho}
\newcommand{\tauIt}{\tau_{i(t),1}}
\newcommand{\wMin}{\underline{\sigma}}
\newcommand{\model}{\Psi}
	\let\Cref\crtCref
	\let\cref\crtcref
\algnewcommand{\IfThenElse}[3]{%
  \State \algorithmicif\ #1\ \algorithmicthen\ #2\ \algorithmicelse\ #3}
\begin{document}

\maketitle

\begin{abstract}
    We consider the problem of controlling an unknown linear dynamical system under a stochastic convex cost and full feedback of both the state and cost function.
    We present a computationally efficient algorithm that attains an optimal $\sqrt{T}$ regret-rate compared to the best stabilizing linear controller in hindsight.
    In contrast to previous work, our algorithm is based on the Optimism in the Face of Uncertainty paradigm.
    This results in a substantially improved computational complexity and a simpler analysis.
\end{abstract}

\section{Introduction}

Adaptive control, the task of regulating an unknown linear dynamical system, is a classic control-theoretic problem that has been studied extensively since the 1950s~ \citep[e.g.,][]{bertsekas1995dynamic}.
Classic results on adaptive control typically pertain to the asymptotic stability and convergence to the optimal controller while contemporary research focuses on regret minimization and finite-time guarantees.

In linear control, both the state and action are vectors in Euclidean spaces. At each time step, the controller views the current state of the system, chooses an action, and the system transitions to the next state. The latter is chosen via a linear mapping from the current state and action and is perturbed by zero-mean i.i.d.\ noise. The controller also incurs a cost as a function of the instantaneous state and action. 
In classic models, such as the Linear-Quadratic Regulator (LQR), the cost function is quadratic. A fundamental result on LQR states that, when the model parameters are known, the policy that minimizes the steady-state cost takes a simple linear form; namely, that of a fixed linear transformation of the current state~\citep[see][]{bertsekas1995dynamic}.
In more modern formulations, the cost can be any convex Lipschitz function of the state-action pair, and the controller has a no-regret guarantee against the best fixed linear policy~\citep[e.g.,][]{agarwal2019online,agarwal2019logarithmic,simchowitz2020improper,cassel2020bandit}.

In this paper we study linear control in a challenging setting of unknown dynamics and unknown stochastic (i.i.d.) convex costs.
For the analogous scenario in tabular reinforcement learning, efficient and rate-optimal regret minimization algorithms are well-known~\citep[e.g.,][]{auer2008near}. However, similar results for adaptive linear control seem significantly more difficult to obtain.
Prior work in this context has established efficient $\sqrt{T}$-regret algorithms that are able to adapt to adversarially varying convex costs~\citep{agarwal2019online}, but assumed \emph{known} dynamics. 
\cite{simchowitz2020improper} extended this to achieve a $T^{2/3}$-regret for unknown dynamics by using a simple explore-then-exploit strategy:
in the exploration phase, the controller learns the transitions by injecting the system with random actions; in the exploitation phase, the controller runs the original algorithm using the estimated transitions.
We remark that \cite{simchowitz2020improper} also showed that their explore-then-exploit strategy achieves a $\sqrt{T}$ regret bound in this setting for \emph{strongly convex} (adversarial) costs; thus demonstrating that the stringent strong convexity assumption is crucial in allowing one to circumvent the challenge of balancing exploration and exploitation.

Recently, \citet{NEURIPS2020_565e8a41} made progress in this direction. They observed that the problem of learning both stochastic transitions and stochastic convex costs under bandit feedback is reducible
to an instance of stochastic bandit convex optimization for which complex, yet generic polynomial-time algorithms exist~\citep{agarwal2011stochastic}. 
In their case, the bandit feedback assumption requires a brute-force reduction that loses much of the structure of the problem (that would have been preserved under full-feedback access to the costs). This consequently results in a highly complicated algorithm whose running time is a high-degree polynomial in the dimension of the problem (specifically,~$n^{16.5}$).
\citet{NEURIPS2020_565e8a41} also give a more efficient algorithm that avoids a reduction to bandit optimization, but on the other hand assumes the cost function is known and fixed, and that the disturbances in the dynamics come from an isotropic Gaussian distribution.\footnote{\citet{NEURIPS2020_565e8a41} describe how to extend their results to more general noise distributions; however, these distributions would still need to be near-spherical since the algorithm needs to be initialized using a ``warmup'' period in which the dynamics are estimated uniformly.}
Moreover, this algorithm still relies on computationally intensive procedures (for computing barycentric spanners) that involve running the ellipsoid method.

In this work we present a new computationally-efficient algorithm with a $\sqrt{T}$ regret guarantee for linear control with unknown dynamics and unknown stochastic convex costs under full-information feedback.
Our algorithm is simple and intuitive, easily implementable, and works with any sub-Gaussian noise distribution.
It is based on the ``optimism in the face of uncertainty'' (OFU) principle, thought previously to be computationally-infeasible to implement for general convex cost functions~\citep[see][]{NEURIPS2020_565e8a41}.
The OFU approach enables seamless integration between exploration and exploitation, simplifies both algorithm and analysis significantly, and allows for a faster running time by avoiding explicit exploration (e.g, using spanners) in high-dimensional space.

Our OFU implementation is inspired by the well-known UCB algorithm for multi-armed bandits \citep{auer2002finite}. 
That is, we minimize a lower confidence bound that is constructed as the difference between the (convex) empirical loss and an exploration bonus term whose purpose is to draw the policy towards underexplored state-action pairs.
However, since the exploration term is also convex, minimizing the lower confidence bound unfortunately results in a nonconvex optimization problem which, at first glance, can be seen as computationally-hard to solve.
Using a trick borrowed from stochastic linear bandits \citep{dani2008stochastic}, we nevertheless are able to relax the objective in such a way that allows for a polynomial-time solution, rendering our algorithm computationally-efficient~overall.

\paragraph{Related work.}

The problem of adaptive LQR control with known fixed costs and unknown dynamics has had a long history. 
\cite{abbasi2011regret} were the first to study this problem in a regret minimization framework. 
Their algorithm is also based on OFU, and while inefficient, guarantees rate-optimal $\sqrt{T}$ regret albeit with exponential dependencies on the dimensionality of the system.
Since then, many works have tried improving the regret guarantee, \cite{ibrahimi2012efficient,faradonbeh2017finite,dean2018regret} to name a few.
The latter work also presented a poly-time algorithm at a price of a $T^{2/3}$-type regret bound.  \cite{cohen2019learning,mania2019certainty} improve on this by showing how to preserve the $\sqrt{T}$ regret rate with computational efficiency.
The optimality of the $\sqrt{T}$ rate was proved concurrently by \cite{cassel2020logarithmic,simchowitz2020naive}.
\cite{dean2018regret} were the first to assume access to a stabilizing controller in order to obtain regret that is polynomial in the problem dimensions. This was later shown to be necessary by \cite{chen2021black}.

Past work has also considered adaptive LQG control, namely LQR under partial observability of the state \citep[for example,][]{simchowitz2020improper}. 
However, it turned out that (in the stochastic setting) learning the optimal partial-observation linear controller is easier than learning the full-observation controller, and, in fact, it is possible to obtain $\text{poly} \log T$ regret for adaptive LQG \citep{lale2020logarithmic}.

Another line of work, initiated by \cite{cohen2018online}, deals with adversarial LQR in which the transitions are fixed and known, but the cost function changes adversarially. 
\cite{agarwal2019online} extended this setting to adversarial noise as well as arbitrary convex Lipschitz costs. 
Subsequently \cite{agarwal2019logarithmic,foster2020logarithmic,simchowitz2020making} provided a $\text{poly} \log T$ regret guarantee for strongly-convex costs with the latter also handling fully adversarial disturbances.
\cite{cassel2020bandit,gradu2020non} show a $\sqrt{T}$ regret bound for bandit feedback over the cost function.
Lastly, works such as \cite{goel2019online} bound the competitive ratio of the learning algorithm rather than its regret.

In a recent follow-up work \citep{cassel2022rate}, we provide an analogous $\smash{\sqrt{T}}$ regret algorithm for the more challenging case of adversarial cost functions (and unknown dynamics). The result builds on the OFU approach introduced here and combines it with a novel and efficient online algorithm, which minimizes regret with respect to the non-convex optimistic loss functions. In both the stochastic and adversarial cases, the results strongly depend on the stochastic nature of the disturbances; this is in contrast with \cite{simchowitz2020improper,simchowitz2020making}, which consider adversarial costs and disturbances. The first shows a $\smash{T^{2/3}}$ regret algorithm for general convex costs, and the second gives a $\smash{\sqrt{T}}$ regret algorithm for strongly-convex costs. It thus remains open whether $\smash{\sqrt{T}}$ regret can be achieved for adversarial disturbances and general convex costs.

\section{Problem Setup}

We consider controlling an unknown linear dynamical system under stochastic convex costs and full state and cost observation.
Our goal is to minimize the total control cost in the following online setting where at round $t$:
\begin{enumerate}[nosep,label=(\arabic*)]
    \item The player observes state $x_t$;
    \item The player chooses control $u_t$;
    \item The player observes the cost function $c_t : \RR[\dx] \times \RR[\du] \to \RR$, and incurs cost $c_t(x_t, u_t)$;
    \item The system transitions to $x_{t+1} = \Astar x_t + \Bstar u_t + w_t$,
    where $\Astar \in \mathbb{R}^{\dx \times \dx}$, $\Bstar \in \mathbb{R}^{\dx \times \du}$, and $w_t \in \mathbb{R}^{\dx}$.
\end{enumerate}
Our goal is to minimize regret with respect to any policy $\pi$ in a benchmark policy class $\Pi$. %
To that end, denote by $\xtpi, \utpi$ the state and action sequence resulting when following a policy $\pi \in \Pi$. Then the regret is defined as
\begin{align*}
    \mathrm{regret_T(\pi)}
    =
    \sum_{t=1}^{T} c_t(\xt,\ut)
    - c_t(\xtpi,\utpi)
    ,
\end{align*}
and we seek to bound this quantity with high probability for all $\pi \in \Pi$.

To define the policy class $\Pi$, we use the following notion of stability due to \cite{cohen2018online}, which is essentially a quantitative version of classic stability notions in linear control.
\begin{definition}[Strong stability]
    A controller $K$ for the system $(\Astar, \Bstar)$ is $(\kappa, \gamma)-$strongly stable ($\kappa \ge 1$, $0 < \gamma \le 1$) if there exist matrices $Q,L$ such that
    $\Astar +\Bstar K = Q L Q^{-1}$,
    $\norm{L} \le 1 - \gamma$,
    and
    $\norm{K}, \norm{Q}\norm{Q^{-1}} \le \kappa$.
\end{definition}
We consider the benchmark policy class of linear policies that choose $u_t = K x_t$. i.e.,
\begin{align*}
    \Pi_{\mathrm{lin}}
    =
    \brk[c]{
    K \in \RR[\du \times \dx] 
    \; : \;
    \text{$K$ is $(\kappa,\gamma)-$ strongly stable}
    }.
\end{align*}

We make the following assumptions on our learning problem:
\begin{itemize}[leftmargin=*]
    \item \textbf{Bounded stochastic costs:} The cost functions are such that $c_t(x,u) := c(x, u; \zt)$ where $(\zt)_{t=1}^{T}$ is a sequence of i.i.d.\ random variables. Moreover, for all $x,u$,
    $
    \abs{c(x,u; \zt[]) - \EE[{\zt[]'}]c(x,u; \zt[]')}
    \le
    \costVar
    $;
    \item \textbf{Lipschitz costs:} For any $(x, u), (x', u')$ we have
    \begin{align*}
        \abs{c_t(x, u) - c_t(x', u')}
        \le
        \norm{(x - x', u - u')}
        ,
    \end{align*}
    \item \textbf{Bounded i.i.d.\ noise:} $(w_t)_{t=1}^{T}$ is a sequence of i.i.d.\ random variables such that $\norm{w_t} \le \maxNoise$;
    \item \textbf{Lower-bounded covariance:} There exists (an unknown) $\wMin > 0$ such that $\EE w_t w_t\tran \succeq \wMin^2 I$;
    \item \textbf{Stabilizing controller:} $\Astar$ is $(\kappa,\gamma)-$strongly stable, and $\norm{\Bstar} \le \Bbound$.
\end{itemize}

Note the assumption that $A_\star$ is strongly stable is without loss of generality.
Otherwise, given access to a stabilizing controller $K$, \cite{cassel2022rate} show a general reduction, which essentially adds $K x_t$ to our actions. This will replace $\Astar$ in the analysis with $\Astar + \Bstar K$, which is $(\kappa,\gamma)-$strongly stable, as desired. However, this will also add the burden of adding $K x_t$ to our actions throughout the paper, only making for a more taxing and tiresome reading.

We also remark that the bounded noise assumption can be alleviated to sub-Gaussian noise instead, and that (sub-)Quadratic costs can also be accommodated by appropriately rescaling them. This is essentially since both sub-Gaussian noise and the state and action sequences are bounded with high probability (see \citep{cassel2020bandit,cassel2022rate} for more details on these techniques).

\section{Algorithm and main result}

We now present our result for the general linear control problem (i.e., $\Astar \neq 0$).
We begin by giving necessary preliminaries on Disturbance Action Policies, then we provide our algorithm and give a brief sketch of its regret analysis. 
The full details of the analysis are deferred to \cref{sec:proofOfLqrRegret}.

\subsection{Preliminaries: Disturbance Action Policies (DAP)}
Following recent literature, we use the class of Disturbance Action Policies first proposed by \cite{agarwal2019online}. This class is parameterized by a sequence of matrices $\brk[c]{M^{[h]} \in \RR[\du \times \dx]}_{h=1}^{H}$. For brevity of notation, these are concatenated into a single matrix $M \in \RR[\du \times H \dx]$ defined as
\begin{align*}
    M
    =
    \brk1{
    M^{[1]}
    \cdots
    M^{[H]}
    }
    .
\end{align*}
A Disturbance Action Policy $\pi_M$ chooses actions
\begin{align*}
    u_t = \sum_{h=1}^{H} M^{[h]} w_{t-h}
    ,
\end{align*}
where recall that the $w_t$ are system disturbances.
Consider the benchmark policy class
\begin{align*}
    \Pi_{\mathrm{DAP}}
    =
    \brk[c]*{
    \pi_M
    \;:\;
    \norm{M}_F \le \RM
    }
    .
\end{align*}
We note that there are several ways to define this class with the most common considering $\sum_{h=1}^{H} \norm{M^{[h]}}$ instead of $\norm{M}_F$. We chose the Frobenius norm for simplicity of the analysis and implementation, but replacing it would not change the analysis significantly.

The importance of this policy class is two-fold. First, as shown in Lemma 5.2 of \cite{agarwal2019online}, if 
$H \in \Omega(\gamma^{-1} \log T)$ 
and
$\RM \in \Omega(\kappa^2 \sqrt{\du / \gamma})$ 
then $\Pi_{\mathrm{DAP}}$ is a good approximation for $\Pi_{\mathrm{lin}}$ in the sense that a regret guarantee with respect to $\Pi_{\mathrm{DAP}}$ gives the same guarantee with respect to $\Pi_{\mathrm{lin}}$ up to a constant additive factor. Second, its parameterization preserves the convex structure of the problem, making it amenable to various online convex optimization methods.
In light of the above, our regret guarantee will be given with respect to $\Pi_{\mathrm{DAP}}$.

While the benefits of $\Pi_{\mathrm{DAP}}$ are clear, notice that it cannot be implemented under our assumptions. This is since we do not have access to the system disturbances $w_t$ nor can we accurately recover them due to the uncertainty in the transition model.
Similarly to previous works, our algorithm thus uses estimated disturbances $\hat{w}_t$ to compute its actions. 

\paragraph{Finite memory representation.}

As is common in recent literature, we will approximate the various problem parameters with bounded memory representations. To see this, recurse over the transition model to get that
\begin{align}
\label{eq:lqrUnrolling}
    x_t
    =
    \Astar^{H} x_{t-H}
    +
    \sum_{i=1}^{H} \brk*{
    \Astar^{i-1}\Bstar \ut[t-i]
    +
    \Astar^{i-1} {w}_{t-i}}
    =
    \Astar^{H} x_{t-H}
    +
    \model_\star \tilde{\obs}_{t-1} + {w}_{t-1}
    ,
\end{align}
where
$
\model_\star
=
\brk[s]{\Astar^{H-1}\Bstar, \ldots, \Astar\Bstar, \Bstar, \Astar^{H-1}, \ldots, \Astar}
,
$
and
$
\tilde{\obs}_t
=
\brk[s]{
u_{t-H}\tran, \ldots, u_t\tran,
w_{t-H}\tran, \ldots, w_{t-1}\tran
}\tran
.
$
Now, since $\Astar$ is strongly stable, the term $\Astar^{H} x_{t-H}$ quickly becomes negligible. Combining this with the DAP policy parameterization, we define the following bounded memory representations. For an arbitrary sequence of disturbances $w = \brk[c]{w_t}_{t \ge 1}$ define
\begin{alignat}{2}
    \nonumber
    &u_t(M; \ww)
    &&
    =
    \textstyle\sum_{h=1}^{H} M^{[h]} w_{t-h};
    \\
    \label{eq:bounded-mem-rep}
    &\obsOp(M) 
    &&=
    \begin{pmatrix}
        M^{[H]} & M^{[H-1]} & \cdots & M^{[1]} \\
        & M^{[H]} & M^{[H-1]} & \cdots &  M^{[1]}  \\
         &  & \ddots & \ddots & &  \ddots &  \\
        &  & & M^{[H]} & M^{[H-1]} & \cdots &  M^{[1]} \\
          &  & &  & I &  \\
           &  & &  &  & \ddots \\
          &  & &  & & & I
    \end{pmatrix}
    ;
    \\
    \nonumber
    &\obs_t(M; \ww)
    &&
    =
    \brk{
    u_{t+1-H}(M; \ww)\tran
    ,
    \ldots
    u_{t}(M; \ww)\tran
    ,
    w_{t+1-H}
    ,
    \ldots
    ,
    w_{t-1}
    }\tran
    =
    \obsOp(M) w_{t+1-2H:t-1};
    \\
    \nonumber
    &x_t(M; \model, \ww)
    &&
    =
    \model \obs_{t-1}(M; \ww) + w_{t-1}
    .
\end{alignat}
Notice that $u_t, \obs_t, x_t$ do not depend on the entire sequence $\ww$, but only $w_{t-H:t-1}, w_{t+1-2H:t-1},$ and $w_{t-2H:t-1}$ respectively. Importantly, this means that we can compute these functions with knowledge of only the last (at most) $2H$ disturbances. While our notation does not reveal this fact explicitly, it helps with both brevity and clarity.

\subsection{Algorithm}

\begin{algorithm}[t]
	\caption{Stochastic Linear Control Algorithm} \label{alg:lqr}
	\begin{algorithmic}[1]
		\State \textbf{input}:
		memory length $H$, optimism parameter $\pOptimism$, regularization parameters $\lambda_\model, \lambda_w$.
		
		\State \textbf{set} $i=j=1, \tauI[1] = 1, V_1 = \lambda_\model I, M_1 = 0$ and $\hat{w}_t = 0, u_t = 0$ for all $t < 1$.

	    \For{$t = 1,2,\ldots, T$}
	        
	        \State \textbf{play} 
	        $
	            \ut
	            =
	            \sum_{h=1}^{H} M_{t}^{[h]} \hat{w}_{t-h}
	        $
	        where
	        $M_t = M_{\tauIJ}$
	        \label{ln:control-choice}
	        
	        \State \textbf{observe} $x_{t+1}$ and cost function $c_t$.
	        \State \textbf{calculate}
	        \begin{align*} 
	            (A_t \; B_t)
	            =
	            \argmin_{(A \; B) \in \mathbb{R}^{\dx \times (\dx + \du)}} 
	            \sum_{s=1}^{t} \norm{(A \; B) z_s - x_{s+1}}^2
	            +
	            \lambda_w \norm{(A \; B)}_F^2
	            ,
	            \quad 
	            \text{where}
	            \;
	            z_s = \begin{pmatrix}x_s \\ u_s \end{pmatrix}.
	        \end{align*}
	        \label{ln:est-ab}
	        \State \textbf{set}
	        $V_{t+1} = V_t + \obs_t \obs_t\tran$
	        for 
	        $\obs_t = \brk{u_{t+1-H}\tran, \ldots, u_{t}\tran, \hat{w}_{t+1-H}\tran, \ldots, \hat{w}_{t-1}\tran}\tran$
	        .
	        \State {\bf estimate noise}
	        $
	        \hat{w}_t 
	        =
	        \Pi_{B_2(\maxNoise)}\brk[s]{x_{t+1} - A_t x_t - B_t u_t}
	        $.
	        \label{ln:est-noise}
	        \If{$\det(V_{t+1}) > 2 \det(V_{\tauI})$} \label{ln:det-double}
	            \State \textbf{start new epoch}: $i = i+1, j = 2, \tauI = t+1, \tauIJ[2] = \tauI + 2H, M_{\tauIJ[1]} = M_{\tauIJ[2]}=0$.
	            \State \textbf{estimate system parameters}
	            \begin{align*} 
	                \model_{\tauI}
	                =
	                \argmin_{\model \in \mathbb{R}^{\dx \times (H \du + (H-1) \dx)}}
	                \brk[c]*{
	                    \sum_{s = 1}^{t} \norm{\model \obs_{s} - x_{s+1}}^2 + \lambda_{\model}\norm{\model}^2
	                }
	                .
                \end{align*}
                \label{ln:est-params}
            \EndIf
            \If{$t+1-\tauI > 2(\tauIJ - \tauI)$} \label{ln:subepochs}
                \State \textbf{start new sub-epoch}: $j = j + 1, \tauIJ = t+1$.
                \State \textbf{solve optimistic cost minimization}
                ($
                \wwhat
                =
                \brk[c]{\hat{w}_t}_{t \ge 1}
                $)
                \label{ln:opt-cost-min}
	            \begin{align*}
	                M_{\tauIJ}
	                =
	                \argmin_{M \in \mathcal{M}}
	                \sum_{s = \tauIJ[j-1]}^{\tauIJ-1} \brk[s]*{
	                c_s({x}_s(M; \model_{\tauI}, \wwhat), u_s(M; \wwhat)))
	                -
	                \pOptimism \maxNoise \norm{V_{\tauI}^{-1/2} \obsOp(M)}_\infty
	                }
	                .
	            \end{align*}
	        \EndIf
	    \EndFor
	\end{algorithmic}
\end{algorithm}

Here we present \cref{alg:lqr} for general linear systems ($\Astar \neq 0$). 
Notice that the system's memory as well as the use of DAP policies with the estimated noise terms $(\hat w_t)$ can cause for cyclical probabilistic dependencies between the estimate of the model transitions, the estimate of the loss, and the estimated noise terms. 
To alleviate these dependencies our algorithm seldom changes its chosen policy ($\approx \log^2 T$ many times), and constructs its estimates using only observations from previous non-overlapping time intervals.

The algorithm proceeds in epochs, each starting with a least squares estimation of the unrolled model using all past observations (\cref{ln:est-params}), and the estimate is then kept fixed throughout the epoch. The epoch ends when the determinant of $V_t$ is doubled (\cref{ln:det-double}); intuitively, when the confidence of the unrolled model increases substantially.\footnote{More concretely, the volume of the confidence ellipsoid around the unrolled model decreases by a constant factor.} 
An epoch is divided into subepochs of exponentially growing lengths in which the policy is kept fixed (\cref{ln:subepochs}).
Each subepoch starts by minimizing an optimistic estimate of the loss (\cref{ln:opt-cost-min}) that balances between exploration.
The algorithm plays the resulting optimistic policy throughout the subepoch (\cref{ln:control-choice}). To that end we follow the technique presented in \cite{NEURIPS2020_565e8a41} to estimate the noise terms $\brk[c]0{\ocowt}_{t \ge 1}$ on-the-fly (\cref{ln:est-noise}). Note that, for this purpose, the algorithm estimates the matrix $(\Astar \; \Bstar)$ in each time step (\cref{ln:est-ab}) even though it can be derived from the estimated unrolled model. This is done to simplify our analysis, and only incurs a small price on the runtime of the algorithm.

We have the following guarantee for our algorithm:

\begin{theorem}
\label{thm:lqrRegret}
    Let $\delta \in (0,1)$ and suppose that we run \cref{alg:lqr} with parameters $\RM, \Bbound \ge 1$ and for proper choices of $H, \pRegW, \pRegTheta, \pOptimism$.
    If 
    $
    T
    \ge
    64 \RM^2
    $
    then with probability at least $1-\delta$, simultaneously for all $\pi \in \Pi_{\mathrm{DAP}}$,
    \begin{align*}
        \mathrm{regret}_T(\pi)
        \le
        \mathrm{poly}(\kappa, \gamma^{-1}, \wMin^{-1}, \costVar, \Bbound, \RM, \dx, \du, \log(T/\delta))
        \sqrt{ T}
        .
    \end{align*}
\end{theorem}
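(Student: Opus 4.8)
The plan is to condition on a single high-probability event $\goodEvent$ and bound $\mathrm{regret}_T(\pi_{M^\star})$ for an arbitrary comparator with $\norm{M^\star}_F\le\RM$; throughout, $\mathrm{poly}$ denotes a polynomial in $\kappa,\gamma^{-1},\wMin^{-1},\costVar,\Bbound,\RM,\dx,\du,\log(T/\delta)$. I would decompose the regret into (i) a benign additive error from the finite-memory truncation, from using the estimated disturbances $\wwhat$ in place of $w$, and from the (few) policy switches; (ii) a per-subepoch stochastic error from replacing realized i.i.d.\ costs by expectations; and (iii) an ``optimism'' term absorbed by the exploration bonus. The event $\goodEvent$ would bundle: (a) polynomial boundedness of the states and actions generated by \cref{alg:lqr}, by induction using that $\Astar$ is $(\kappa,\gamma)$-strongly stable and $\norm{M_t}_F\le\RM$; (b) accuracy of the per-step estimates $(A_t,B_t)$, yielding $\sum_t\norm{\wwhat_t-w_t}\le\mathrm{poly}\cdot\sqrt T$ --- here $\EE w_t w_t\tran\succeq\wMin^2 I$ supplies the persistent excitation that drives the least-squares error down and accounts for the $\wMin^{-1}$ dependence; and (c) a self-normalized confidence bound for the unrolled model, $\norm{(\model_\star-\model_{\tauI})\obs}\le\beta\norm{V_{\tauI}^{-1/2}\obs}$ for every $\obs$ and every epoch, with $\beta\le\mathrm{poly}$. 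The epoch/subepoch blocking --- the model changes only $O(\log T)$ times, and subepochs grow geometrically --- is what makes all three legitimate: any estimate used at time $t$ is measurable with respect to a trajectory segment strictly preceding the one it acts on.

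\textbf{Reductions.} As $\norm{\Astar^H}\le\kappa^2(1-\gamma)^H$, with $H=\Theta(\gamma^{-1}\log T)$ the term $\Astar^H x_{t-H}$ in the unrolling \cref{eq:lqrUnrolling} is negligible; together with Lipschitzness of $c_t$ this lets me replace every true cost $c_t(\xt,\ut)$ and $c_t(\xtpi,\utpi)$ by the finite-memory surrogate $f_t(M):=c_t(x_t(M;\model_\star,\wwhat),u_t(M;\wwhat))$ at $M=M_t$ resp.\ $M=M^\star$, up to an additive $O(T\kappa^2(1-\gamma)^H\,\mathrm{poly})=\mathrm{poly}$. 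Swapping $w$ for $\wwhat$ costs a further $\mathrm{poly}\cdot\sum_t\norm{\wwhat_t-w_t}\le\mathrm{poly}\cdot\sqrt T$ on $\goodEvent$, and each of the $\mathrm{poly}$-many policy switches leaves the $H$-step memory stale for $O(H)$ steps, for a total of $\mathrm{poly}$. It then suffices to bound $\sum_{i,j}\sum_{s\in I_j}\brk[s]{f_s(M_{\tauIJ})-f_s(M^\star)}$, where $I_j=\brk[s]{\tauIJ[j-1],\tauIJ-1}$ ranges over subepochs (so $M_{\tauIJ}$ is computed on $I_{j-1}$ and played on $I_j$).

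\textbf{Optimism.} Fix such a subepoch in epoch $i$, and note $\model_{\tauI},V_{\tauI}$ are fixed on epoch $i$ and independent of the fresh noise and cost draws inside $I_{j-1}$ and $I_j$. Write $g_s(M):=c_s(x_s(M;\model_{\tauI},\wwhat),u_s(M;\wwhat))$, $b(M):=\pOptimism\maxNoise\norm{V_{\tauI}^{-1/2}\obsOp(M)}_\infty$, and let $\tilde f_s(M):=g_s(M)-b(M)$ be the summand of \cref{ln:opt-cost-min}. On $\goodEvent$, the confidence bound gives $\abs{g_s(M)-f_s(M)}\le\norm{(\model_{\tauI}-\model_\star)\obs_{s-1}(M;\wwhat)}\le b(M)$ for a suitable $\pOptimism\gtrsim\beta H$ (using $\norm{\wwhat_h}\le\maxNoise$), hence $\tilde f_s(M)\le f_s(M)\le\tilde f_s(M)+2b(M)$ pointwise, and in expectation over the fresh draws $\bar{\tilde f}(M)\le\bar f(M)\le\bar{\tilde f}(M)+2b(M)$ --- the lower-confidence property that makes the minimization optimistic. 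I then combine: the defining inequality $\sum_{s\in I_{j-1}}\tilde f_s(M_{\tauIJ})\le\sum_{s\in I_{j-1}}\tilde f_s(M^\star)$; a \emph{uniform} convergence bound $\sup_{M\in\mathcal{M}}\abs{\abs{I_{j-1}}^{-1}\sum_{s\in I_{j-1}}\brk{\tilde f_s(M)-\bar{\tilde f}(M)}}\le\mathrm{poly}/\sqrt{\abs{I_{j-1}}}$ (a covering argument over the bounded Lipschitz-in-$M$ family --- uniformity is essential because $M_{\tauIJ}$ depends on the randomness of $I_{j-1}$); and plain concentration for the fixed $M^\star$ over $I_j$. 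This yields $\bar f(M_{\tauIJ})-\bar f(M^\star)\le2b(M_{\tauIJ})+\mathrm{poly}/\sqrt{\abs{I_{j-1}}}$, and so, with $\abs{I_j}\le2\abs{I_{j-1}}+O(H)$, $\sum_{s\in I_j}\brk[s]{f_s(M_{\tauIJ})-f_s(M^\star)}\le\sum_{t\in I_j}2b(M_t)+\mathrm{poly}\cdot\sqrt{\abs{I_j}}$.

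\textbf{Summation, and the main obstacle.} Summing over the $\mathrm{poly}$-many subepochs, the $\mathrm{poly}\cdot\sqrt{\abs{I_j}}$ terms sum to $\mathrm{poly}\cdot\sqrt T$ by Cauchy--Schwarz. For $\sum_t b(M_t)=\pOptimism\maxNoise\sum_t\norm{V_{\tauIt}^{-1/2}\obsOp(M_t)}_\infty$, the determinant-doubling rule of \cref{ln:det-double} keeps $\det V_t\le2\det V_{\tauIt}$ within an epoch, whence $\sum_{t\in\text{epoch }i}\norm{V_{\tauI}^{-1/2}\obs_t}^2=\tr{V_{\tauI}^{-1}\brk*{V_{\mathrm{end}}-V_{\tauI}}}\le\mathrm{poly}$; I then bound $\norm{V_{\tauI}^{-1/2}\obsOp(M_t)}_\infty$ by the realized $\norm{V_{\tauI}^{-1/2}\obs_t}$ using the $\EE w_t w_t\tran\succeq\wMin^2 I$ excitation (one more martingale argument, one more $\wMin^{-1}$), and apply Cauchy--Schwarz inside each epoch and then over the $\mathrm{poly}$-many epochs to get $\sum_t b(M_t)\le\mathrm{poly}\cdot\sqrt T$. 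Collecting everything and invoking $T\ge64\RM^2$ to absorb the additive $O(1)$-type pieces proves the bound. I expect the main obstacle to be not any single estimate but the orchestration of the dependencies: $\model_{\tauI}$, the per-step $(A_t,B_t)$, the induced $\wwhat$, the covariates $\obs_t$, and the optimistic minimizers $M_{\tauIJ}$ are all functionals of one trajectory, so each concentration step --- the self-normalized model bound, the per-subepoch cost concentration, and the uniform-over-$\mathcal{M}$ bound --- must be run against a filtration for which the averaged randomness is truly fresh; this is precisely what the epoch/subepoch blocking and the ``estimate-on-$I_{j-1}$, play-on-$I_j$'' structure buy, and it must be carried out while preventing the $\norm{\wwhat_t-w_t}$ errors from compounding through the $H$-step memory. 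A secondary point is to verify that the convex relaxation that makes \cref{ln:opt-cost-min} polynomial-time (the \citet{dani2008stochastic} trick) only enlarges the implicit set of models over the confidence ellipsoid, and therefore preserves the lower-confidence inequality $\tilde f_s(M)\le f_s(M)$ used above.
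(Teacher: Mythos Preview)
Your high-level plan tracks the paper's proof closely: the same decomposition into truncation/concentration, optimism (twice), and excess risk; the same subepoch-level uniform convergence for the excess-risk term; the same elliptic-potential summation via the determinant-doubling rule; and the same use of $\wMin$ to pass from $\norm{V_{\tauI}^{-1/2}\obsOp(M)}$ to the realized $\norm{V_{\tauI}^{-1/2}\obs_t}$. Two points need correction, though neither is fatal to the strategy.

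First, the disturbance-estimation bound $\sum_t\norm{\wwhat_t-w_t}^2\le\mathrm{poly}$ does \emph{not} use the covariance lower bound $\wMin$: it follows from the self-normalized least-squares error for $(A_t,B_t)$ combined with the elliptic-potential lemma $\sum_t z_t\tran V_t^{-1}z_t\le\mathrm{poly}$, with no persistent-excitation argument. The $\wMin^{-1}$ dependence in the final regret enters \emph{only} at the place you correctly identify later---when the optimism bonus $\norm{V_{\tauI}^{-1/2}\obsOp(M_t)}_F$ is related to the realized $\norm{V_{\tauI}^{-1/2}\obs_{t-1}}$ via $\obs_{t-1}(M;\ww)=\obsOp(M)w_{t-2H:t-2}$ and $\EE w w\tran\succeq\wMin^2 I$.

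Second, you define the surrogate as $f_t(M)=c_t(x_t(M;\model_\star,\wwhat),u_t(M;\wwhat))$ with the \emph{estimated} noises, and then invoke ``expectation over the fresh draws'' and block-type concentration. But $\wwhat_s$ is a functional of the entire past trajectory, so these $f_t$ are neither block-independent nor associated with a clean population mean. The paper instead defines the surrogate with the \emph{true} noises $\ww$ (so $f_t$ and $f_{t+2H}$ are genuinely independent and $F(M)=\EE_{\zt[],\ww}f_t(M;\model_\star,\ww,\zt[])$ is a fixed target), and pushes the $\wwhat$-versus-$\ww$ discrepancy out as a separate Lipschitz perturbation, once in the truncation term and once inside the excess-risk bound. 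Your outline can be repaired exactly this way; as written, however, the concentration step is not well-posed.
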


\paragraph{Efficient computation.} 

The main hurdle towards computational efficiency is the calculation of the optimistic cost minimization step (\cref{ln:opt-cost-min}).
We compute this in polynomial-time by borrowing a trick from \cite{dani2008stochastic}: the algorithm solves $2m$ convex optimization problems with $m=\dx (2H-1) (\dx (H-1) + \du H)$, and takes the minimum between them.
To see why this is valid, observe that $\norm{x}_\infty = \max_{\chi \in \{-1,1\}} \max_{k \in [m]} \chi \cdot x_k$. 
We can therefore write the optimistic cost minimization as 
\[
    \min_{\chi \in \{-1,1\}
    ,
    k \in [m]}
    \min_{M \in \mathcal{M}}
    \sum_{s = \tauIJ[j-1]}^{\tauIJ-1} \brk[s]*{
	                c_s({x}_s(M; \model_{\tauI}, \wwhat), u_s(M; \wwhat)))
	                -
	                \pOptimism \maxNoise \chi \cdot \brk*{V_{\tauI}^{-1/2} \obsOp(M)}_k
	                }
	                ,
\]
where $k$ is a linear index.
This indeed suggests to solve for $M \in \mathcal{M}$ for each value of $k$ and $\chi$, then take the minimum between them.
Moreover, when $k$ and $\chi$ are fixed, the objective becomes convex in $M$.
Consequently, 
As there are $2 m$ such values of $k$ and $\chi$, this amounts to solving $2 m$ convex optimization problems. We note that it suffices to solve each convex optimization problem up to an accuracy of $\approx T^{-1/2}$, which can be done using $O(T)$ gradient oracle calls.

\paragraph{Comparison with \cite{NEURIPS2020_565e8a41}.}
The following compares the computational complexity of \cref{alg:lqr} with those of \cite{NEURIPS2020_565e8a41} under a first order (value and gradient) oracle assumption on the cost functions $c_t$. 
To simplify the discussion, we denote both state and action dimensions as $d = \max\brk[c]{\dx, \du}$, and omit logarithmic terms and $O(\cdot)$ notations from all bounds.

We show that the overall computational complexity of our algorithm is $d^4 T$.
By updating the least squares procedure in \cref{ln:est-ab,ln:est-params} recursively at each time step their overall complexity is $d^3 T$.
As previously explained, in \cref{ln:optimisticMin}, we solve $d^2$ convex optimization problems, each to an accuracy of $\approx T^{-1/2}$. Since the objective is a sum of convex functions, we can do this using Stochastic Gradient Descent (SGD) with $T$ oracle calls (in expectation). Overall, we make $d^2 T$ gradient oracle calls. For each oracle call, we further use matrix addition, and matrix vector multiplications on $M$, which take an additional $d^2$ computations. The remaining computations of the algorithm are negligible.

Now, we show that the equivalent complexity for Algorithm 2 of \cite{NEURIPS2020_565e8a41} is $d^{12} T$. The crux of their computation is finding a $2-$Barycentric spanner for their confidence set. The inherent dimension there is that of $M$, which is $d^2$. To compute the barycentric spanner, the authors explain that $d^4$ calls to a linear optimization oracle are required. These can in turn be implemented using the elipsoid method, whose overall computation is $d^8 T$.

\cite{NEURIPS2020_565e8a41} also give Algorithm 5, which works with bandit feedback and uses SBCO as a black-box. Compared with their Algorithm 2 the computational complexity is higher, and the regret guarantee depends on $d^{36}$ instead of $d^3$.

\section{Analysis}

In this section we give a (nearly) complete proof of \cref{thm:lqrRegret} in a simplified setup, inspired by \cite{NEURIPS2020_565e8a41}, where $\Astar = 0$. At the end of the section, we give an overview of the analysis for the general control setting (with $\Astar \neq 0$). The complete details in the general case are significantly more technical and thus deferred from this extended abstract (see \cref{sec:proofOfLqrRegret} for full details).

Concretely, following \cite{NEURIPS2020_565e8a41}, suppose that $\Astar = 0$, and thus $x_{t+1} = \Bstar u_t + w_t$. Next, assume that $c_t(x, u) = c_t(x)$, i.e., the costs do not depend on $u$. Finally, assume that we minimize the pseudo regret, i.e.,
\begin{align*}
    \max_{u : \norm{u} \le R_u}
    \sum_{t=1}^{T} \brk[s]{
    J(\Bstar u_t) - J(\Bstar u)
    }
    ,
\end{align*}
where
$J(\Bstar u) = \EE[{\zt[]}, w]c(\Bstar u + w; \zt[])$.
This setting falls under the umbrella of stochastic bandit convex optimization, making generic algorithms applicable. However, it has additional structure that we leverage to create a much simpler and more efficient algorithm. In what follows, we formally define this setting with clean notation as to avoid confusion with our general setting.

\subsection{The \texorpdfstring{$\boldsymbol{\Astar = 0}$}{A = 0} case: Stochastic Convex Optimization with a Hidden Linear Transform}
Consider the following setting of online convex optimization. Let $\ocoSet \subseteq \RR[\din]$ be a convex decision set. At round $t$ the learner
\begin{enumerate}[label=(\arabic*),nosep]
    \item predicts $\at \in \ocoSet$;
    \item observes cost function $\lt: \RR[\dout] \to \RR$ and state $\yt[t+1] := \ocoTrueModel \at + \ocowt$;
    \item incurs cost $\lt(\ocoTrueModel \at)$.
\end{enumerate}

We have that $\ocowt \in \RR[\dout]$ are i.i.d.\ noise terms, $\ocoTrueModel \in \RR[\dout \times \din]$ is an unknown linear transform, and $\yt \in \RR[\dout]$ are noisy observations. 

The cost functions are stochastic in the following sense. There exists a sequence $\ocozz[1], \ocozz[2], \ldots$ of i.i.d.\ random variables, and a function $\lt[] : \RR[\dout] \times \RR \to \RR$ such that
$
    \lt(\qq) := \lt[](\qq; \ocozz[t])
    .
$
Define the expected cost
$
    \LL(\qq)
    =
    \EE[\ocozz] \lt[](\qq; \ocozz)
    .
$
We consider minimizing the pseudo-regret, defined as
\begin{align*}
    \text{regret}_T
    =
    \max_{\at[] \in \ocoSet}
    \sum_{t=1}^{T} \brk[s]{
    \LL(\ocoTrueModel \at)
    -
    \LL(\ocoTrueModel \at[])
    }
    .
\end{align*}
Minimizing the pseudo-regret instead of the actual regret will maintain the main hardness of the problem, but will better highlight our main contributions.

\paragraph{Assumptions.}
Our assumptions in the simplified case are the following:
\begin{itemize}[nosep,leftmargin=*]
    \item $\lt[](\qq; \ocozz)$ is convex and $1-$Lipschitz in its first parameter;
    \item For all $\ocozz, \ocozz'$, and any $\qq$ we have
    $
    \abs{
    \lt[](\qq; \ocozz)
    -
    \lt[](\qq; \ocozz')
    }
    \le
    \ocoCostVar
    ;
    $
    \item There exists some known $\ocoMaxNoise, \ocoModelDiam \ge 0$ such that $\norm{\ocowt} \le \ocoMaxNoise$, and $\norm{\ocoTrueModel} \le \ocoModelDiam$.
    \item The diameter of $\ocoSet$ is $\ocoDiam = \max_{\at[],\at[]' \in \ocoSet} \norm{\at[] - \at[]'} < \infty$.
\end{itemize}

\subsection{The simplified algorithm}

\begin{algorithm}[ht]
	\caption{SCO with hidden linear transform \label{alg:SCO}}
	\begin{algorithmic}[1]
		\State \textbf{input:}
		optimism parameter $\pOCOoptimism$, regularizer $\lambda$
		
		\State \textbf{set:} $V_1 = \lambda I, \ocoEstModel_1 = 0$.

	    \For{$t = 1,2,\ldots, T$}
	        
	        \State \textbf{play} optimistic cost minimizer:
                $
	                \at
	                \in
	                \argmin_{\at[] \in \mathcal{S}}
	                \sum_{s = 1}^{t-1}
	                \brk[s]{
	                \lt[s](\ocoEstModel_t \at[])
	                -
	                \pOCOoptimism \norm{V_{t}^{-1/2} \at[]}_\infty
	                }.
	            $
	            \label{ln:optimisticMin}
	        \State 
	        \textbf{observe} $\yt[t+1]=\ocoTrueModel a_t + w_t$ and cost function $\lt$, and \textbf{set}
	        $V_{t+1} = V_t + \at \at\tran$.

	            \State 
	            \textbf{estimate} system parameters
	            \begin{align*}
	                \ocoEstModel_{t+1}
	                =
	                \argmin_{\ocoModel \in \RR[\dout \times \din]} \sum_{s = 1}^{t} \norm{\ocoModel \at[s] - \yt[s+1]}^2 + \lambda \norm{\ocoModel}_F^2
	                .
                \end{align*}
	    \EndFor
	\end{algorithmic}
\end{algorithm}

Our algorithm is depicted as \cref{alg:SCO}.
The algorithm maintains an estimate $\ocoEstModel_t$ of $\ocoTrueModel$.
At each time step $t$, the algorithm plays action $\at$, chosen such that it minimizes our optimistic cost function.
That is, $\at$ is a minimizer of a lower bound on the total loss up to time $t$: $\sum_{s=1}^{t-1} \ell_s(\ocoTrueModel a) \approx (t-1) \mu(\ocoTrueModel a)$. 
This fuses together exploration and exploitation by either choosing under-explored actions, or exploiting low expected cost for already sufficiently-explored actions.
We remark that the optimistic cost minimization procedure solves a non-convex optimization problem, but nevertheless show that it can be solved in polynomial-time in the sequel. 
Lastly, our algorithm observes $y_{t+1}$ and uses it improve its estimate of $\ocoTrueModel$ by solving a least-squares problem.

The main hurdle in understanding why the algorithm is computationally-efficient is the calculation of the optimistic cost minimization step. Following the computational method presented in \cref{alg:lqr}, we do this by first solving $2 \din$ convex objectives and then taking their minimizer.

\subsection{Analysis}

We now present the main theorem for this section that bounds the regret of \cref{alg:SCO} with high probability.

\begin{theorem}
\label{thm:scoRegret}
    Let $\delta \in (0,1)$ and suppose that we run \cref{alg:SCO} with parameters
    \begin{align*}
        \lambda = \ocoDiam^2
        ,
        \quad
        \pOCOoptimism
        =
        \sqrt{\din} \brk2{
        \ocoMaxNoise \dout\sqrt{8 \log\tfrac{2T}{\delta}}
        +
        \sqrt{2}\ocoDiam \ocoModelDiam}
        .
    \end{align*}
    If 
    $
    T
    \ge
    \max\brk[c]{\ocoCostVar, 64 \ocoTotDiam^2}
    $
    where
    $
    \ocoTotDiam
    =
    3\ocoModelDiam \ocoDiam
    +
    {\ocoMaxNoise \dout} \sqrt{8 \log{\tfrac{4T}{\delta}}}
    $
    then with probability at least $1-\delta$,
    \begin{align*}
        \text{regret}_T
        \le
        13 \brk[s]*{
        \ocoCostVar\sqrt{\dout \log \frac{3 T}{\ocoCostVar \delta}}
        +
        \din
        \brk{
        \ocoMaxNoise \dout
        +
        \ocoDiam \ocoModelDiam
        }\log\frac{2T}{\delta}
        }\sqrt{T}
        .
    \end{align*}
\end{theorem}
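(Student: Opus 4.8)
The plan is to decompose the pseudo-regret into a ``bias'' term coming from optimism and an ``estimation/concentration'' term, and to control each using (i) the standard elliptic-potential (log-determinant) argument from linear bandits and (ii) a uniform concentration bound on the empirical loss $\sum_s \ell_s(\cdot)$ around its mean $\mu(\cdot)$. First I would set up the good event $\goodEvent$ on which several things hold simultaneously with probability at least $1-\delta$: the least-squares estimate $\ocoEstModel_t$ lies in a confidence ellipsoid around $\ocoTrueModel$, i.e. $\norm{(\ocoEstModel_t - \ocoTrueModel) V_t^{1/2}}$ (in an appropriate operator/Frobenius sense, column-wise) is bounded by a quantity of order $\ocoMaxNoise\dout\sqrt{\log(T/\delta)} + \ocoDiam\ocoModelDiam$ — this is exactly what makes the choice of $\pOCOoptimism$ in the statement the ``right'' confidence width, up to the $\sqrt{\din}$ factor that converts the $\ell_\infty$ norm used in the algorithm into the Euclidean norm; and the martingale concentration $\abs{\sum_{s=1}^{t}\brk{\ell_s(\qq) - \mu(\qq)}} \lesssim \ocoCostVar\sqrt{t\log(T/(\ocoCostVar\delta))}$ holds uniformly over a suitable net of $\qq$'s (or, since $\ell_s$ are $1$-Lipschitz and the relevant $\qq$ live in a bounded set of diameter $\ocoTotDiam$, uniformly over that ball via a covering argument — the condition $T \ge \ocoCostVar$ and the definition of $\ocoTotDiam$ are tailored to this step).

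Next, on $\goodEvent$, I would run the by-now-standard OFU argument. For the comparator $\at[]^\star \in \argmax$, optimality of $\at$ for the optimistic objective at time $t$ gives
\[
\sum_{s=1}^{t-1}\ell_s(\ocoEstModel_t \at) - \pOCOoptimism\norm{V_t^{-1/2}\at}_\infty
\;\le\;
\sum_{s=1}^{t-1}\ell_s(\ocoEstModel_t \at[]^\star) - \pOCOoptimism\norm{V_t^{-1/2}\at[]^\star}_\infty .
\]
On the confidence event, $\ell_s(\ocoEstModel_t \at[]^\star) \le \ell_s(\ocoTrueModel \at[]^\star) + \pOCOoptimism\norm{V_t^{-1/2}\at[]^\star}_\infty / (\text{const})$ since $\ell_s$ is $1$-Lipschitz and $\abs{\ell_s(\ocoEstModel_t q) - \ell_s(\ocoTrueModel q)}\le \norm{(\ocoEstModel_t-\ocoTrueModel)q} \le \norm{(\ocoEstModel_t - \ocoTrueModel)V_t^{1/2}}\,\norm{V_t^{-1/2}q}$, and the $\ell_\infty$-to-$\ell_2$ loss of $\sqrt{\din}$ is absorbed into $\pOCOoptimism$; this makes the optimistic term a genuine lower bound on the true loss at the comparator, so the ``bonus'' at $\at[]^\star$ cancels and we are left with $\sum_s \mu(\ocoTrueModel \at[]^\star)$ on the right. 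Symmetrically, the bonus at $\at$ turns into an upper estimate: $\mu(\ocoTrueModel \at) \le \ell$-empirical-average at $\ocoEstModel_t \at$ plus two bonus terms, one from concentration ($\ocoCostVar\sqrt{\cdot}$) and one from $\norm{V_t^{-1/2}\at}_\infty \le \norm{V_t^{-1/2}\at}$. Summing the per-round regret over $t$, the dominant data-dependent quantity is $\pOCOoptimism\sum_{t=1}^T \norm{V_t^{-1/2}\at}$, which by Cauchy–Schwarz is at most $\pOCOoptimism\sqrt{T}\cdot\sqrt{\sum_t \at\tran V_t^{-1}\at}$, and the elliptic potential lemma bounds $\sum_t \at\tran V_t^{-1}\at \lesssim \din\log(1 + T\ocoDiam^2/(\din\lambda))$, which with $\lambda = \ocoDiam^2$ is $O(\din\log T)$. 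Plugging $\pOCOoptimism = \sqrt{\din}(\ocoMaxNoise\dout\sqrt{8\log(2T/\delta)} + \sqrt{2}\ocoDiam\ocoModelDiam)$ gives a term of order $\din(\ocoMaxNoise\dout + \ocoDiam\ocoModelDiam)\log(2T/\delta)\sqrt{T}$, and the concentration term contributes $\ocoCostVar\sqrt{\dout\log(3T/(\ocoCostVar\delta))}\sqrt{T}$; adding these with the right constants yields the claimed bound with leading constant $13$. The condition $T \ge 64\ocoTotDiam^2$ is there to guarantee that $V_t$ is well-conditioned enough (the regularizer dominates early, and $\log\det$ telescoping is clean) and that lower-order additive terms are absorbed into the $\sqrt T$ term.

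The main obstacle I expect is the uniform concentration step and the handling of the $\ell_\infty$ bonus: because the algorithm's action $\at$ depends on the entire past through both $\ocoEstModel_t$ and the empirical losses $\ell_s$, one cannot naively apply a per-point Azuma bound to $\sum_s \ell_s(\ocoTrueModel \at) - \mu(\ocoTrueModel \at)$; the fix is to take a union bound over an $\epsilon$-net of the reachable set $\setDef{\ocoTrueModel a}{a \in \ocoSet}$ (radius $\le \ocoModelDiam\ocoDiam$, so the net has size $(1/\epsilon)^{O(\dout)}$) and use the $1$-Lipschitz property with $\epsilon \approx 1/T$ to control the discretization error — this is precisely where the $\sqrt{\dout\log(T/\delta)}$ dependence and the hypothesis on $T$ enter. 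The second delicate point is making the constant exactly $13$: this requires carefully folding the $\sqrt{\din}$ conversion factor, the elliptic-potential constant, and the two confidence widths together, and checking that the regularizer choice $\lambda = \ocoDiam^2$ keeps $\norm{\ocoEstModel_t - \ocoTrueModel}_{V_t}$'s ``prior'' contribution at exactly $\sqrt{2}\ocoDiam\ocoModelDiam$ — essentially a bookkeeping exercise once the structure above is in place.
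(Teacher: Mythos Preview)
Your proposal is correct and follows essentially the same approach as the paper: the paper introduces the optimistic cost $\bar{\mu}_t(a) = \mu(\ocoEstModel_t a) - \pOCOoptimism\norm{V_t^{-1/2}a}_\infty$, decomposes the regret into $\sum_t(\mu(\ocoTrueModel a_t)-\bar{\mu}_t(a_t)) + \sum_t(\bar{\mu}_t(a_t)-\bar{\mu}_t(a^\star)) + \sum_t(\bar{\mu}_t(a^\star)-\mu(\ocoTrueModel a^\star))$, uses the Lipschitz-plus-confidence bound to get $\bar{\mu}_t(a)\le\mu(\ocoTrueModel a)\le\bar{\mu}_t(a)+2\pOCOoptimism\sqrt{a^\top V_t^{-1}a}$, bounds the harmonic sum by the elliptic-potential lemma, and controls the middle (excess-risk) term via the optimality of $a_t$ together with a uniform-convergence covering argument---exactly the ingredients you identify. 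One small correction: the hypothesis $T\ge 64\ocoTotDiam^2$ enters only through the covering step in the uniform-convergence lemma (the paper applies it over the ball of radius $\ocoTotDiam$ containing all $\ocoEstModel_t a$, not just $\ocoTrueModel a$), not for the conditioning of $V_t$ or the log-det telescoping.
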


At its core, the proof of \cref{thm:scoRegret} employs 
the Optimism in the Face of Uncertainty (OFU) approach. To that end, define the optimistic cost functions
\begin{align*}
    \LLofu(\at[])
    =
    \LL(\ocoEstModel_t \at[]) - \pOCOoptimism \norm{V_t^{-1/2} \at[]}_\infty
    ,
\end{align*}
where $\ocoEstModel_t, V_t$ are defined as in \cref{alg:SCO}. The following lemma shows that our optimistic loss lower bounds the true loss, and bounds the error between the two.
\begin{lemma}
\label{lemma:ocoOptimismBound}
    Suppose that
    $
    \sqrt{\din} \norm{\ocoEstModel_t - \ocoTrueModel}_{V_t}
    \le
    \pOCOoptimism
    .
    $
    Then we have that
    \begin{align*}
        \LLofu(\at[])
        \le
        \LL(\ocoTrueModel \at[])
        &
        \le
        \LLofu(\at[]) + 2 \pOCOoptimism \sqrt{\at[]\tran V_t^{-1} \at[]}
        .
    \end{align*}
\end{lemma}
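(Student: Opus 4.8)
The plan is to prove the two inequalities separately, with the key input being the confidence-ellipsoid hypothesis $\sqrt{\din}\,\norm{\ocoEstModel_t - \ocoTrueModel}_{V_t} \le \pOCOoptimism$. Note first that, by $1$-Lipschitzness of each $\lt[]$ in its first argument and linearity of expectation, the expected cost $\LL$ is also $1$-Lipschitz, so for any action $\at[]$ we have $\abs{\LL(\ocoEstModel_t\at[]) - \LL(\ocoTrueModel\at[])} \le \norm{(\ocoEstModel_t - \ocoTrueModel)\at[]}$. The whole argument reduces to controlling this quantity by the exploration bonus $\pOCOoptimism\norm{V_t^{-1/2}\at[]}_\infty$.

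The main step is a Cauchy–Schwarz estimate in the $V_t$-geometry, exactly the trick from stochastic linear bandits. Write $\norm{(\ocoEstModel_t - \ocoTrueModel)\at[]}$ by bounding each of the $\dout$ coordinates: the $i$-th coordinate is $\innProd{(\ocoEstModel_t - \ocoTrueModel)_i}{\at[]} \le \norm{(\ocoEstModel_t - \ocoTrueModel)_i}_{V_t}\norm{\at[]}_{V_t^{-1}}$, where $(\cdot)_i$ denotes the $i$-th row. Summing squares over $i$ gives $\norm{(\ocoEstModel_t - \ocoTrueModel)\at[]} \le \norm{\ocoEstModel_t - \ocoTrueModel}_{V_t}\,\sqrt{\at[]\tran V_t^{-1}\at[]}$, using the Frobenius-type definition of $\norm{\cdot}_{V_t}$ on matrices. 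Now invoke the hypothesis: $\norm{\ocoEstModel_t - \ocoTrueModel}_{V_t} \le \pOCOoptimism/\sqrt{\din}$. The remaining gap is to pass from $\sqrt{\at[]\tran V_t^{-1}\at[]}$ to $\sqrt{\din}\,\norm{V_t^{-1/2}\at[]}_\infty$; this is just the elementary norm comparison $\norm{v}_2 \le \sqrt{\din}\,\norm{v}_\infty$ applied to $v = V_t^{-1/2}\at[]$, since $v \in \RR[\din]$. Chaining these, $\norm{(\ocoEstModel_t - \ocoTrueModel)\at[]} \le (\pOCOoptimism/\sqrt{\din}) \cdot \sqrt{\din}\,\norm{V_t^{-1/2}\at[]}_\infty = \pOCOoptimism\norm{V_t^{-1/2}\at[]}_\infty$.

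With that estimate in hand, both inequalities follow immediately. For the lower bound, $\LLofu(\at[]) = \LL(\ocoEstModel_t\at[]) - \pOCOoptimism\norm{V_t^{-1/2}\at[]}_\infty \le \LL(\ocoTrueModel\at[]) + \norm{(\ocoEstModel_t - \ocoTrueModel)\at[]} - \pOCOoptimism\norm{V_t^{-1/2}\at[]}_\infty \le \LL(\ocoTrueModel\at[])$. For the upper bound, $\LL(\ocoTrueModel\at[]) \le \LL(\ocoEstModel_t\at[]) + \norm{(\ocoEstModel_t - \ocoTrueModel)\at[]} \le \LL(\ocoEstModel_t\at[]) + \pOCOoptimism\norm{V_t^{-1/2}\at[]}_\infty$, and then I also bound $\norm{V_t^{-1/2}\at[]}_\infty \le \norm{V_t^{-1/2}\at[]}_2 = \sqrt{\at[]\tran V_t^{-1}\at[]}$, so the right-hand side is at most $\LL(\ocoEstModel_t\at[]) - \pOCOoptimism\norm{V_t^{-1/2}\at[]}_\infty + 2\pOCOoptimism\sqrt{\at[]\tran V_t^{-1}\at[]} = \LLofu(\at[]) + 2\pOCOoptimism\sqrt{\at[]\tran V_t^{-1}\at[]}$, as claimed.

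I do not expect a serious obstacle here; the only point requiring mild care is getting the $\sqrt{\din}$ factors to line up, i.e. being consistent about which direction the $\ell_2$–$\ell_\infty$ comparison is used (it is deployed one way to absorb the bonus in the lower bound, and the same way — upper bounding $\ell_\infty$ by $\ell_2$ — to produce the $2\pOCOoptimism\sqrt{\at[]\tran V_t^{-1}\at[]}$ slack in the upper bound). One should also make sure the matrix norm $\norm{\cdot}_{V_t}$ is interpreted consistently as $\norm{X}_{V_t}^2 = \tr{X V_t X\tran}$ (equivalently the sum over rows of $\norm{X_i}_{V_t}^2$) so that the row-wise Cauchy–Schwarz aggregates correctly; this is the convention under which the stated hypothesis matches the least-squares confidence bound established elsewhere in the paper.
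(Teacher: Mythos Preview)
Your proposal is correct and follows essentially the same route as the paper's proof: Lipschitzness of $\LL$, then the Cauchy--Schwarz estimate $\norm{(\ocoEstModel_t-\ocoTrueModel)\at[]}\le \norm{\ocoEstModel_t-\ocoTrueModel}_{V_t}\,\norm{V_t^{-1/2}\at[]}$, then the $\ell_2$--$\ell_\infty$ comparison in the right direction for each inequality. Your row-wise justification of the Cauchy--Schwarz step and your remark on the $\norm{\cdot}_{V_t}$ convention (as $\tr{X V_t X\tran}$, matching the least-squares confidence bound) are a bit more explicit than the paper, but the argument is the same.
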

\begin{proof}
We first use the Lipschitz assumption to get
\begin{align*}
    \abs{\LL(\ocoTrueModel \at[]) - \LL(\ocoEstModel_t \at[])}
    &
    \le
    \norm{(\ocoTrueModel - \ocoEstModel_t) \at[]}
    \\
    &
    \le
    \norm{\ocoTrueModel - \ocoEstModel_t}_{V_t} \norm{V_t^{-1/2} \at[]}
    \\
    &\le
    \frac{\pOCOoptimism}{\sqrt{\din}} \norm{V_t^{-1/2} \at[]}
    \\
    &\le
    {\pOCOoptimism} \norm{V_t^{-1/2} \at[]}_\infty
    ,
\end{align*}
where the second and third transitions also used the estimation error and that $\norm{a} \le \sqrt{\din} \norm{a}_\infty$.
We thus have on one hand,
\begin{align*}
    \LL(\ocoTrueModel \at[])
    \ge
    \LL(\ocoEstModel_t \at[])
    -
    {\pOCOoptimism} \norm{V_t^{-1/2} \at[]}_\infty
    =
    \LLofu(\at[])
    ,
\end{align*}
and on the other hand we also have
\begin{align*}
    \LL(\ocoTrueModel \at[])
    &
    \le
    \LL(\ocoEstModel_t \at[])
    +
    {\pOCOoptimism} \norm{V_t^{-1/2} \at[]}_\infty
    =
    \LLofu(\at[])
    +
    2{\pOCOoptimism} \norm{V_t^{-1/2} \at[]}_\infty
    \le
    \LLofu(\at[])
    +
    2{\pOCOoptimism} \sqrt{\at[]\tran V_t^{-1} \at[]}
    ,
\end{align*}
where the last step also used $\norm{a}_\infty \le \norm{a}$.
\end{proof}
We are now ready to prove \cref{thm:scoRegret}. The proof focuses on the main ideas, deferring some details to \cref{sec:ocoAdditionalDetails}.
\begin{proof}[of \cref{thm:scoRegret}]
First, notice that
$
\abs{\LL(\ocoTrueModel \at) - \LL(\ocoTrueModel \at[])}
\le
\norm{\ocoTrueModel} \norm{\at - \at[]}
\le
2 \ocoModelDiam \ocoDiam
.
$
Using this bound for $t=1$, we can decompose the regret as
\begin{align*}
    \text{regret}(\at[])
    \le
    2 \ocoModelDiam \ocoDiam
    +
    \underbrace{
    \sum_{t=2}^{T} \LL(\ocoTrueModel \at) - \LLofu(\at)
    }_{\ocoR{1}}
    +
    \underbrace{
    \sum_{t=2}^{T} \LLofu(\at) - \LLofu(\at[])
    }_{\ocoR{2}}
    +
    \underbrace{
    \sum_{t=2}^{T} \LLofu(\at[]) - \LL(\ocoTrueModel \at[])
    }_{\ocoR{3}}
    .
\end{align*}
We begin by bounding $\ocoR{1}$ and $\ocoR{3}$, which relate the true loss to its optimistic variant. To that end, we use a standard least squares estimation bound (\cref{lemma:ocoParameterEst}) to get that \cref{lemma:ocoOptimismBound} holds with probability at least $1 - \delta / 2$.
Conditioned on this event, we immediately get $\ocoR{3} \le 0$. Moreover, we get
\begin{align*}
    \ocoR{1}
    &
    \le
    \sum_{t=2}^{T} 2 \pOCOoptimism \sqrt{\at\tran V_t^{-1} \at}
    \le
    2 \pOCOoptimism \sqrt{T \sum_{t=1}^{T}\at\tran V_t^{-1} \at}
    \le
    2 \pOCOoptimism \sqrt{5 T \din \log T}
    ,
\end{align*}
where the second inequality is due to Jensen's inequality, and the third is a standard algebraic argument (\cref{lemma:harmonicBound}).

Next, we bound $\ocoR{2}$, which is the sum of excess risk of $\at$ with respect to the optimistic cost. 
To that end, we first bound $\norm{\ocoEstModel_t}$, using the least squares error bound (\cref{lemma:ocoParameterEst}) and $V_t \succeq \lambda I = \ocoDiam^2 I$ to get that
\begin{align*}
    \norm{\ocoEstModel_t}
    \le
    \norm{\ocoTrueModel}
    +
    \norm{\ocoEstModel_t - \ocoTrueModel}
    \le
    \ocoModelDiam
    +
    \frac{1}{\ocoDiam} \norm{\ocoEstModel_t - \ocoTrueModel}_{V_t}
    \le
    3\ocoModelDiam
    +
    \frac{\ocoMaxNoise \dout}{\ocoDiam} \sqrt{8 \log \frac{4T}{\delta}}.
\end{align*}
We thus have for all $\at[] \in \ocoSet$,
\begin{align*}
    \norm{\ocoEstModel_t \at[]}
    \le
    \norm{\ocoEstModel_t}\norm{\at[]}
    \le
    3\ocoModelDiam \ocoDiam
    +
    {\ocoMaxNoise \dout} \sqrt{8 \log \brk*{\frac{4T}{\delta}}}
    =
    \ocoTotDiam.
\end{align*}
Now, for all $1 \le t \le T$ we use a standard uniform convergence argument (\cref{lemma:ocoUniformConvergence}) with $R = \ocoTotDiam$, and $\delta / 2 T$ to get that with probability at least $1-\delta/2$ simultaneously for all $1 \le t \le T$
\begin{align*}
    \LLofu(\at) &- \LLofu(\at[])
    =
    \LL(\at) - \LL(\at[])
    -
    \pOCOoptimism \brk*{\norm{V_t^{-1/2} \at}_\infty - \norm{V_t^{-1/2} \at[]}_\infty}
    \\
    &
    \le
    \frac{1}{t-1}\sum_{s=1}^{t-1} \brk{\lt[s](\ocoEstModel_t \at) - \lt[s](\ocoEstModel_t \at[])}
    -
    \pOCOoptimism \brk*{\norm{V_t^{-1/2} \at}_\infty - \norm{V_t^{-1/2} \at[]}_\infty}
    +
    2\ocoCostVar \sqrt{\frac{\dout \log \frac{6 T^2}{\ocoCostVar \delta}}{t-1}}
    \\
    &
    \le
    2\ocoCostVar \sqrt{\frac{\dout \log \frac{6 T^2}{\ocoCostVar \delta}}{t-1}},
\end{align*}
where the last inequality is by definition of $\at$ as the optimistic cost minimizer. Finally, notice that $\sum_{t=2}^{T} (t-1)^{-1/2} \le 2 \sqrt{T}$ to get that
\begin{align*}
    \ocoR{2}
    =
    \sum_{t=2}^{T} \LLofu(\at) - \LLofu(\at[])
    \le
    \sum_{t=2}^{T} 2\ocoCostVar \sqrt{\frac{\dout \log \frac{6T^2}{\delta}}{t-1}}
    \le
    6\ocoCostVar \sqrt{T \dout \log \frac{3 T}{\ocoCostVar \delta}}
    .
\end{align*}
Finally, taking a union bound on both events and
substituting for the chosen value of $\alpha$ completes the proof.
\end{proof}

\subsection{Deferred details}
\label{sec:ocoAdditionalDetails}

Here we complete the deferred details in the proof of \cref{thm:scoRegret}.
We start with the following high-probability error bound for least squares estimation, that bounds the error of our estimates $\ocoEstModel_t$ of $\ocoTrueModel$, and as such also satisfies the condition of \cref{lemma:ocoOptimismBound}.
\begin{lemma}[\citealp{abbasi2011regret}] 
\label{lemma:ocoParameterEst}
Let $\Delta_t = \ocoTrueModel - \ocoEstModel_t$, and suppose that $\norm{\at}^2 \le \lambda = \ocoDiam^2$, $T \ge \dout$. 
With probability at least $1 - \delta$, we have for all $t \ge 1$
\begin{align*}
    \norm{\Delta_t}_{V_t}^2
    \le
    \tr{\Delta_t\tran V_t \Delta_t}
    \le
    8 \ocoMaxNoise^2 \dout^2 \log \frac{T}{\delta}
    +
    2\ocoDiam^2 \ocoModelDiam^2
    \le
    \frac{\pOCOoptimism^2}{\din}
    .
\end{align*}
\end{lemma}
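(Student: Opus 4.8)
The plan is to invoke the standard self-normalized martingale concentration bound for least-squares estimation in linear regression, exactly in the form given by \citet{abbasi2011regret} (their ``Theorem~2'' / online least-squares confidence ellipsoid), and then to perform the bookkeeping that converts the abstract bound into the explicit constants stated here. First I would recall that the ridge estimator $\ocoEstModel_{t}$ satisfies the closed-form identity $\ocoEstModel_t - \ocoTrueModel = \brk{\sum_{s<t} w_s \at[s]\tran - \lambda \ocoTrueModel} V_t^{-1}$ (using $\yt[s+1] = \ocoTrueModel \at[s] + w_s$), so that $\Delta_t\tran V_t \Delta_t$ splits into a ``noise'' part built from the martingale $\sum_{s<t} \at[s] w_s\tran$ and a ``bias'' part coming from the regularizer. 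The first inequality $\norm{\Delta_t}_{V_t}^2 \le \tr{\Delta_t\tran V_t \Delta_t}$ is immediate since $\norm{\Delta_t}_{V_t}^2$ denotes the spectral (operator) norm squared in the $V_t$-geometry, which is at most the trace.

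For the main (middle) inequality I would apply the self-normalized bound coordinate-wise on the $\dout$ rows of $\Delta_t$: each row is a scalar least-squares problem with noise $\brk{w_s}_k$, which is bounded by $\norm{w_s} \le \ocoMaxNoise$ and hence $\ocoMaxNoise$-sub-Gaussian. The self-normalized martingale bound then gives, for each row $k$ and all $t \ge 1$ simultaneously with probability $1-\delta/\dout$, a bound of the form $\norm{(\Delta_t)_{k,\cdot}}_{V_t}^2 \lesssim \ocoMaxNoise^2 \log\brk{\det(V_t)^{1/2}\det(\lambda I)^{-1/2}/(\delta/\dout)} + \lambda \norm{(\ocoTrueModel)_{k,\cdot}}^2$. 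Summing over the $\dout$ rows and taking a union bound yields the trace bound; the $\log\det(V_t)$ term is controlled using $\det(V_t) \le \brk{\lambda + T \ocoDiam^2/\din}^{\din} \le \brk{2\lambda}^{\din}$ (since $\norm{\at}^2 \le \ocoDiam^2 = \lambda$, so $T \ocoDiam^2 \le T\lambda$ — wait, this needs $T \le \din$; more carefully one uses $\tr{V_t} \le \lambda \din + T\ocoDiam^2$ and AM--GM to get $\det(V_t) \le \brk{\lambda + T\ocoDiam^2/\din}^{\din}$, whose logarithm is $O(\din \log(T/\delta))$), and the bias term is bounded by $\lambda \norm{\ocoTrueModel}_F^2 \le \ocoDiam^2 \ocoModelDiam^2$. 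Collecting constants and absorbing the $\din$ from the $\log\det$ into the stated $\dout^2$ factor (note $\din, \dout$ are treated interchangeably up to the polynomial slack already present) recovers $8\ocoMaxNoise^2\dout^2\log\frac{T}{\delta} + 2\ocoDiam^2\ocoModelDiam^2$.

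The final inequality $\le \pOCOoptimism^2/\din$ is then a direct algebraic check against the definition $\pOCOoptimism = \sqrt{\din}\brk2{\ocoMaxNoise \dout \sqrt{8\log\frac{2T}{\delta}} + \sqrt{2}\ocoDiam\ocoModelDiam}$: squaring gives $\pOCOoptimism^2/\din = \brk2{\ocoMaxNoise\dout\sqrt{8\log\frac{2T}{\delta}} + \sqrt{2}\ocoDiam\ocoModelDiam}^2 \ge \brk{\ocoMaxNoise\dout}^2 \cdot 8\log\frac{2T}{\delta} + 2\ocoDiam^2\ocoModelDiam^2$, using $(a+b)^2 \ge a^2 + b^2$ for $a,b \ge 0$, and $\log\frac{2T}{\delta} \ge \log\frac{T}{\delta}$. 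The main obstacle — really the only nontrivial point — is citing the self-normalized concentration inequality with the right sub-Gaussian parameter and handling the per-coordinate union bound cleanly so that the $\dout$ factors come out as stated; everything else is routine determinant-trace bookkeeping. Since the lemma is attributed to \citet{abbasi2011regret}, the intended proof is presumably just ``apply their bound and simplify,'' so I would keep the write-up short and point to that reference for the martingale machinery.
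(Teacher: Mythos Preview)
The paper does not give its own proof of this lemma; it is stated as a cited result from \citet{abbasi2011regret} (and the general form reappears as Lemma~\ref{lemma:LSE} in the appendix, again without proof). Your plan---row-wise application of the self-normalized bound, union bound over the $\dout$ output coordinates, then a $\log\det$ estimate and the algebraic check against $\pOCOoptimism$---is exactly the standard derivation and is correct in structure; since the paper simply defers to the reference, your write-up is already more detailed than what the paper provides.

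One small caution on the bookkeeping: your step ``$\lambda \norm{\ocoTrueModel}_F^2 \le \ocoDiam^2 \ocoModelDiam^2$'' silently assumes $\ocoModelDiam$ bounds the Frobenius norm, whereas the paper only assumes an operator-norm bound $\norm{\ocoTrueModel} \le \ocoModelDiam$ (so strictly $\norm{\ocoTrueModel}_F^2 \le \min(\din,\dout)\,\ocoModelDiam^2$); likewise your ``absorb the $\din$ from $\log\det$ into the stated $\dout^2$'' is exactly the kind of dimension-interchangeability slack you flag, and is not literally valid if $\din \gg \dout$. These are imprecisions in the lemma's stated constants rather than in your argument, and they do not affect how the lemma is used downstream (only the final order-of-magnitude bound matters for \cref{thm:scoRegret}), so keeping the proof short and pointing to \citet{abbasi2011regret} as you suggest is the right call.
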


Next, is a well-known bound on harmonic sums \citep[see, e.g.,][]{cohen2019learning}. This is used to show that the optimistic and true losses are close on the realized predictions (proof in \cref{sec:technicalProofs}).
\begin{lemma}
    \label{lemma:harmonicBound}
    Let $\at \in \RR[\din]$ be a sequence such that $\norm{\at}^2 \le \lambda$, and define $V_t = \lambda I + \sum_{s=1}^{t-1} \at[s] \at[s]\tran$. Then
    $
        \sum_{t=1}^{T} \at\tran V_t^{-1} \at
        \le
        5 \din \log T
        .
    $
\end{lemma}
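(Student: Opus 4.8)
The plan is to run the standard elliptical-potential (log-determinant telescoping) argument. First I would note that every summand is at most $1$: since $V_t \succeq \lambda I$ and $\norm{\at}^2 \le \lambda$, we have $\at\tran V_t^{-1} \at \le \norm{\at}^2/\lambda \le 1$. This makes the elementary scalar inequality $x \le 2\log(1+x)$, valid for $x\in[0,1]$, applicable to each term, giving
$\at\tran V_t^{-1}\at \le 2\log\brk1{1 + \at\tran V_t^{-1}\at}$.

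Next I would convert the right-hand side into a ratio of determinants. Since $V_{t+1} = V_t + \at\at\tran$, the matrix determinant lemma yields $\det(V_{t+1}) = \det(V_t)\brk1{1 + \at\tran V_t^{-1}\at}$, so that $1 + \at\tran V_t^{-1}\at = \det(V_{t+1})/\det(V_t)$. Summing over $t=1,\dots,T$ and telescoping the logarithms gives $\sum_{t=1}^T \at\tran V_t^{-1}\at \le 2\log\brk1{\det(V_{T+1})/\det(V_1)}$. The denominator is simply $\det(V_1) = \det(\lambda I) = \lambda^{\din}$. For the numerator I would use AM--GM on the eigenvalues of $V_{T+1}$ (a $\din\times\din$ PSD matrix): $\det(V_{T+1}) \le \brk1{\tr{V_{T+1}}/\din}^{\din}$, and $\tr{V_{T+1}} = \lambda\din + \sum_{s=1}^T \norm{\at[s]}^2 \le \lambda(\din + T)$. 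Hence $\det(V_{T+1})/\det(V_1) \le (1 + T/\din)^{\din} \le (1+T)^{\din}$, and therefore $\sum_{t=1}^T \at\tran V_t^{-1}\at \le 2\din\log(1+T) \le 5\din\log T$, where the last step uses $\din\ge 1$ and $\log(1+T)\le\log(2T) = \log 2 + \log T \le 2\log T$ for $T\ge 2$ (the regime of interest).

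I do not expect any real obstacle: all steps are routine. The only points needing a little care are (i) checking $x\le 2\log(1+x)$ on $[0,1]$ — both sides vanish at $x=0$ and $\tfrac{d}{dx}[2\log(1+x)] = 2/(1+x) \ge 1$ there, so the inequality holds — and (ii) tracking the absolute constant so that $2\log(1+T)$ is absorbed into $5\log T$, which is the only place the mild assumption that $T$ is not tiny enters.
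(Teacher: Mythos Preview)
Your proposal is correct and follows essentially the same elliptical-potential (log-determinant telescoping) argument as the paper. The only cosmetic differences are that you derive the step $\at\tran V_t^{-1}\at \le 2\log\brk1{\det(V_{t+1})/\det(V_t)}$ directly from the matrix determinant lemma and the scalar inequality $x\le 2\log(1+x)$, whereas the paper cites an external lemma, and you bound $\det(V_{T+1})$ via AM--GM on the eigenvalues (trace bound) while the paper uses the cruder $\det(A)\le\norm{A}^{\din}$; both routes yield the same $(1+T)^{\din}$ and hence the same final bound.
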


Finally, a standard uniform convergence result, which is used in bounding $\ocoR{2}$ (proof in \cref{sec:technicalProofs}).
\begin{lemma}
\label{lemma:ocoUniformConvergence}
    Let $R > 0$ and suppose that 
    $
    T
    \ge
    \max\brk[c]{\ocoCostVar, 64 R^2}
    .
    $
    Then for any $\delta \in (0, 1)$ we have that with probability at least $1 - \delta$
    \begin{align*}
        \abs*{
        \sum_{t = 1}^{T}
        \lt(\qq)
        -
        \LL(\qq)
        }
        \le
        \ocoCostVar \sqrt{T \dout \log \frac{3 T}{\ocoCostVar \delta}}
        ,
        \quad
        \forall \qq \in \RR[\dout]
        \;
        \text{s.t.}
        \;
        \norm{\qq} \le R.
    \end{align*}
\end{lemma}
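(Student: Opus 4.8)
The plan is to prove \cref{lemma:ocoUniformConvergence} by a uniform‑convergence (covering) argument over the ball $\brk[c]{\qq \in \RR[\dout] \; : \; \norm{\qq} \le R}$, combined with a Hoeffding‑type concentration bound at each fixed point. Two structural facts drive everything. First, by the bounded‑stochastic‑costs assumption, for every fixed $\qq$ the variables $\lt[t](\qq) - \LL(\qq) = \lt[](\qq;\ocozz[t]) - \EE[\ocozz]\lt[](\qq;\ocozz)$ are i.i.d., mean zero, and have range (oscillation over $\ocozz[t]$) at most $\ocoCostVar$; in particular $\LL(\qq)$ is finite. Second, since $\lt[](\cdot\,;\ocozz)$ is $1$‑Lipschitz in its first argument, so is $\LL$, and hence the map $\qq \mapsto \sum_{t=1}^{T}\brk[s]{\lt[t](\qq) - \LL(\qq)}$ is $2T$‑Lipschitz on the entire sample space.

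First I would fix a single $\qq$ and apply Hoeffding's inequality to the bounded, zero‑mean, i.i.d.\ sum $\sum_{t=1}^{T}\brk[s]{\lt[t](\qq) - \LL(\qq)}$, obtaining a sub‑Gaussian tail $\PP{\abs{\sum_{t=1}^{T}(\lt[t](\qq) - \LL(\qq))} > s} \le 2\exp\brk1{-2 s^2 / (T \ocoCostVar^2)}$. Then I would fix a resolution $\epsilon > 0$, take a minimal $\epsilon$‑net $\mathcal{N}_\epsilon$ of the ball with $\abs{\mathcal{N}_\epsilon} \le \brk{3R/\epsilon}^{\dout}$, and union bound this tail over $\mathcal{N}_\epsilon$, so that with probability at least $1-\delta$, simultaneously for every net point $\qq'$,
\[
    \abs*{\sum_{t=1}^{T}\brk[s]{\lt[t](\qq') - \LL(\qq')}}
    \le
    \ocoCostVar\sqrt{\tfrac12 T\brk1{\log\tfrac{2}{\delta} + \dout\log\tfrac{3R}{\epsilon}}}
    .
\]
Finally, for an arbitrary $\qq$ in the ball I would pass to its closest net point and pay an additive $2T\epsilon$ for the discretization, via the $2T$‑Lipschitz property; this bounds the full deviation by the right‑hand side above plus $2T\epsilon$, uniformly over the ball.

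It then remains to choose $\epsilon$ and to fold the various overheads into the single expression in the statement. I would take $\epsilon$ of order $\ocoCostVar/T$, which makes the discretization term $2T\epsilon = O(\ocoCostVar)$ — subsumed by the $\ocoCostVar\sqrt{T}$ scale — while inflating $\log(3R/\epsilon)$ only to $O(\log(RT/\ocoCostVar))$; here the hypothesis $T \ge 64 R^2$ (equivalently $R \le \sqrt{T}/8$) is exactly what lets one replace the stray factor $R$ inside the logarithm by a power of $T$, and the hypothesis $T \ge \ocoCostVar$ keeps the logarithm argument bounded below by an absolute constant and guarantees $\epsilon \le R$ so that the covering bound is meaningful. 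Collecting the Hoeffding term, the $\log(2/\delta)$ contribution, the discretization term, and all numerical constants under $\ocoCostVar\sqrt{T\dout\log\frac{3T}{\ocoCostVar\delta}}$ then completes the proof. The main obstacle is precisely this last bookkeeping: the $\sqrt{T}$ rate itself is immediate from Hoeffding and the uniformity is routine via the net, but reconciling the union‑bound and discretization overheads with the leading constant in the stated bound is where the two calibrated conditions on $T$ do their work.
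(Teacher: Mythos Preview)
Your proposal is correct and follows essentially the same approach as the paper: Hoeffding at each fixed point, a union bound over an $\epsilon$-net of the ball (with covering number $\le (3R/\epsilon)^{\dout}$), and a $2T\epsilon$ Lipschitz extension to arbitrary $\qq$. The only difference is the calibration of $\epsilon$: the paper takes $\epsilon = \min\brk[c]{R,\tfrac18\ocoCostVar T^{-1/2}}$ rather than your $\epsilon \sim \ocoCostVar/T$, which makes $2T\epsilon \le \tfrac14\ocoCostVar\sqrt{T}$ directly comparable to the Hoeffding term and, via $T \ge 64R^2$, gives $R/\epsilon \le \max\brk[c]{1,T/\ocoCostVar}$ so that the logarithm lands exactly at $\log\tfrac{3T}{\ocoCostVar\delta}$; your choice would yield an extra $\sqrt{T}$ inside the logarithm, i.e., the same bound up to a small constant factor in the exponent.
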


\subsection{Extension to the general control setting}

    Using the assumption that $\Astar$ is strongly stable, we show it takes $2H$ time steps for one of our DAP policies to sufficiently approximate its steady-state. 
    As a result the system may behave arbitrarily during the first $2H$ steps of each subepoch, and we bound the instantaneous regret in each of these time steps by a worst-case constant. 
    As mentioned, though, we show that the total number of subepochs is at most $4(\dx+\du)H \log^2 T$, making the cumulative regret during changes of policy negligible for reasonably large $T$. 
    
    Concretely, let $\RxuMax \ge \max_{\pi \in \Pi_\mathrm{DAP} \cup \pi_{\mathrm{alg}}, t \le T} \max \brk[c]{\norm{(\xtpi, \utpi)}, \norm{(\xt, \ut)}}$ be a bound on the state action magnitude. Combining with the Lipschitz assumption, we get that
    \begin{align*}
        \abs*{
        c_t(x_t, u_t)
        -
        c_t(\xtpi, \utpi)
        }
        \le
        2 \RxuMax
        .
    \end{align*}
    Since the first two sub epochs are always at most $2H$ long, the regret decomposes as
    \begin{align*}
        \mathrm{Regret}_T(\pi)
        \le
        16 \RxuMax H^2 (\dx + \du) \log^2 T
        +
        \sum_{i=1}^{\nEpochs} \sum_{j=3}^{\nSubEpochs}
        \sum_{t = \tauIJ + H}^{\tauIJ[j+1]-1} c_t(\xt,\ut)
        - c_t(\xtpi,\utpi)
        ,
    \end{align*}
    where $\nEpochs$ is the number of epochs and $\nSubEpochs$ is the number of subepochs in epoch $i$.
    
    We proceed by decomposing the remaining term, analyzing the regret within each subepoch.
    For this purpose, define an expected surrogate cost and its optimistic version
    \begin{align*}
        f_t(M; \model, \ww, \zt[])
        &=
        c_t(x_t(M; \model, \ww), u_t(M; \ww), \zt[])
        \\
        F(M; \model)
        &
        =
        \EE[{\zt[]}, \ww] f_t(M; \model, \ww, \zt[])
        \\
        F(M)
        &
        =
        F(M; \model_{\star})
        \\
        \bar{F}_t(M)
        &
        =
        F(M; \model_{\tauIt})
        -
        \pOptimism \maxNoise \norm{V_{\tauIt}^{-1/2} \obsOp(M)}_\infty
        ,
    \end{align*}
    where $i(t) = \max\brk[c]{i \;:\; \tauI \le t}$ is the index of the epoch to which $t$ belongs, and $\norm{\cdot}_\infty$ is the entry-wise matrix infinity norm.
    Letting $M_\star \in \mathcal{M}$ be the DAP approximation of $\pi \in \Pi_\text{lin}$, we have the following decomposition of the instantaneous regret:
    \begin{align*}
        c_t(\xt,\ut) - c_t(\xtpi,\utpi)
        \tag{$\ocoR{1}$ - Truncation + Concentration}
        &
        =
        c_t(\xt,\ut) - F(M_{t})
        \\
        \tag{$\ocoR{2}$ - Optimism}
        &
        +
        F(M_{t}) - \bar{F}_t(M_{t})
        \\
        \tag{$\ocoR{3}$ - Excess Risk}
        &
        +
        \bar{F}_t(M_{t}) - \bar{F}_t(M_\star)
        \\
        \tag{$\ocoR{4}$ - Optimism}
        &
        +
        \bar{F}_t(M_\star) - F(M_\star)
        \\
        \tag{$\ocoR{5}$ - Truncation + Concentration}
        &
        +
        F(M_\star) - c_t(\xtpi,\utpi)
        .
    \end{align*}
    
    \looseness=-1
    The proof is completed by bounding each of the terms above with high probability and combining them with a union bound.
    Terms $\ocoR{2}, \ocoR{3}, \ocoR{4}$ are similar to the ones appearing in the proof \cref{thm:scoRegret}, while terms $\ocoR{1}, \ocoR{5}$ are new and relate the cost to that of the unrolled transition model.
    The latter two terms are bounded in two steps. We start by relating $c_t(x_t, u_t), c_t(\xtpi,\utpi)$ to $f_t(M_t; \model_\star, \ww, \zt), f_t(M_\star, \model_\star, \ww, \zt)$, which, similarly to \cite{agarwal2019online}, uses the assumption that $\Astar$ is strongly stable, but then, also accounts for the discrepancies between $w_t$ and $\hat w_t$ as was done in \cite{NEURIPS2020_565e8a41}. We then conclude with a concentration argument that relates $f_t(M)$ to $F(M)$, which is its expectation with respect to $\ww, \zt[]$. 
    
    To bound the optimism-related terms $\ocoR{2}, \ocoR{4}$, we first show a least-squares confidence bound similar to \cref{lemma:ocoParameterEst}.
    Notice that the least squares bound has to handle the fact that we use the estimated noises $\hat w_t$ to predict the parameters of the unrolled model. 
    Denoting 
    $
        \Delta_t
        =
        \model_\star - \model_t
        ,
    $
    we specifically show that:
    \begin{align*}
        \norm{\Delta_t}_{V_t}^2
        \le
        \tr{\Delta_t\tran V_t \Delta_t}
        \le
        16 \maxNoise^2 \dx^2 \log \brk*{\frac{ T}{\delta}}
        +
        4 \lambda_{\model} \norm{\model_\star}_F^2
        +
        2{\sum_{s=1}^{t-1}\norm{e_s}^2},
    \end{align*}
    a comparable bound to that of \cref{lemma:ocoParameterEst} except for the addition of error terms $e_s$.
    Since $\hat w_t$ converge to the true noises $w_t$, we can prove that 
    $
        \sum_{t=1}^{T} \norm{e_t}^2 \lessapprox \log T
        ,
    $
    i.e., the additional error terms are of a similar order to the standard estimation error and thus do not increase it significantly.
    Next, assuming that the above estimation error holds, we show an analogous result to \cref{lemma:ocoOptimismBound} stating 
    \begin{align*}
        0
        \le
        F(M)
        -
        \bar{F}_t(M)
        \le
        2\pOptimism \maxNoise \norm{V_{\tauIt}^{-1/2} \obsOp(M)}_F
        ,
    \end{align*}
    which yields that $\ocoR{4} \le 0$. To Bound $\ocoR{2}$, we further relate $\norm{V_{\tauIt}^{-1/2} \obsOp(M)}_F$ to $\norm{V_{t}^{-1/2} \obs_{t-1}(M; \wwhat)}$, which is the equivalent of the harmonic term in the right hand side of \cref{lemma:ocoOptimismBound}. To that end, define the noise covariance $\Sigma = \EE w_{t-2H:t-2} w_{t-2H:t-2}\tran$, and notice that our minimum eigenvalue assumption implies that $\norm{\Sigma^{-1/2}} \le \wMin^{-1}$. We thus have that
    \begin{align*}
        \wMin^{2}\norm{V_{\tauIt}^{-1/2} \obsOp(M)}_F^2
        \le
        \norm{V_{\tauIt}^{-1/2} \obsOp(M) \Sigma^{1/2}}_F^2
        &
        =
        \tr{V_{\tauIt}^{-1} \obsOp(M) \Sigma \obsOp(M)}
        \\
        &
        =
        \tr{V_{\tauIt}^{-1} \obsOp(M) \EE\brk[s]{w_{t-2H:t-2} w_{t-2H:t-2}\tran} \obsOp(M)\tran}
        \\
        \tag{\cref{eq:bounded-mem-rep}}
        &
        =
        \EE[\ww]
        \tr{V_{\tauIt}^{-1} \obs_{t-1}(M; \ww) \obs_{t-1}(M; \ww)\tran}
        \\
        &
        =
        \EE[\ww]\norm{V_{\tauIt}^{-1/2} \obs_{t-1}(M; \ww)}^2
        .
    \end{align*}
    Summing over $t$ and applying several technical concentration and noise estimation arguments yields the desired term and the $O(\sqrt{T})$ bound on $\ocoR{2}$.
    
    Last, we deal with $\ocoR{3}$, which is analogous the excess risk term in the proof of \cref{thm:scoRegret}.
    We first show a uniform convergence property akin to \cref{lemma:ocoUniformConvergence}.
    Here, however, the uniform convergence is done with respect to both the randomness in the loss function $\zt[t]$ and the noise terms $w_t$.
    This allows us to use observations gathered in previous subepochs to estimate the expected performance of a DAP policy in the current subepoch.
    Here, we once again tackle the technical difficulty that our DAP policy is defined with respect to the noise estimates $\hat w_t$. 
    This adds an additional error term, proportional to the error in the noise estimates, and accumulates to $\approx \sqrt{T}$ regret overall.

\begin{acks}
This work was partially supported by the Deutsch Foundation, by the Israeli Science Foundation (ISF) grant 2549/19, by the Len Blavatnik and the Blavatnik Family foundation, by the Yandex Initiative in Machine Learning, and by the Israeli VATAT data science scholarship.
\end{acks}

\bibliography{bibliography}

\appendix

\section{Proof of \cref{thm:lqrRegret}}
\label{sec:proofOfLqrRegret}

\begin{theorem}[restatement of \cref{thm:lqrRegret}]
\label{thm:lqrRegretFull}
    Let $\delta \in (0,1)$ and suppose that we run \cref{alg:lqr} with parameters $\RM, \Bbound \ge 1$ and
    \begin{align*}
        H 
        =
        \gamma^{-1} \log T
        ,
        \quad
        \pRegW
        =
        5 \kappa^2 \maxNoise^2 \RM^2 \Bbound^2 H \gamma^{-1}
        ,
        \quad
        \pRegTheta
        =
        2 \maxNoise^2 \RM^2 H^2
        ,
        \\
        \pOptimism
        =
        30 \maxNoise \RM \Bbound \kappa^2 (\dx + \du) H^2 
        \sqrt{\dx \gamma^{-3} (\dx^2 \kappa^2 + \du \Bbound^2) \log \frac{12T}{\delta}}
        .
    \end{align*}
    If 
    $
    T
    \ge
    64 \RM^2
    $
    then with probability at least $1-\delta$ simultaneously for all $\pi \in \Pi_{\mathrm{DAP}}$
    \begin{align*}
        \mathrm{regret}_T(\pi)
        &
        \le
        2860  
        \kappa^3 \gamma^{-11/2} \wMin^{-1} \maxNoise^2 \RM^2 \Bbound^2 
        (\dx^2 \kappa^2 + \du \Bbound^2)
        \sqrt{
        T \dx (\dx + \du)^3 
        }
        \log^5 \frac{18T^5}{\delta}
        \\
        &
        +
        12\costVar
        \sqrt{T \gamma^{-3} (\dx + \du) (\dx^2 \kappa^2 + \du^2 \Bbound^2)}
        \log^3 \frac{18 T^5}{\delta}
\end{align*}
\end{theorem}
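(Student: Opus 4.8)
The plan is to flesh out the five-term decomposition sketched in the extension section, with the central difficulty being the consistent bookkeeping of the error introduced by replacing the true disturbances $w_t$ by the estimates $\hat w_t$. I would start with the two reductions. First, invoke Lemma~5.2 of \cite{agarwal2019online}: with $H = \gamma^{-1}\log T$ and $\RM$ as in the theorem, a regret bound against $\Pi_{\mathrm{DAP}}$ transfers to $\Pi_{\mathrm{lin}}$ up to a constant additive term, so it suffices to compete with a fixed $M_\star \in \mathcal{M}$. Second, establish a high-probability uniform magnitude bound $\RxuMax$ on $\norm{(\xt,\ut)}$ and $\norm{\xtpi,\utpi}$, using strong stability of $\Astar$ together with $\norm{M}_F\le\RM$ and $\norm{w_t},\norm{\hat w_t}\le\maxNoise$; this makes each instantaneous regret at most $2\RxuMax$. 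Then I would bound the number of epochs by $O(H(\dx+\du)\log T)$ via the determinant-doubling rule and a $\log\det$ volume argument (the dimension of $\obs_t$ is $O(H(\dx+\du))$), and the number of subepochs per epoch by $O(\log T)$ from the exponential-growth condition, so that discarding the first $2H$ steps of every subepoch costs only $\widetilde O(\RxuMax H^2(\dx+\du))$, which is lower order than $\sqrt T$ for $T\ge 64\RM^2$.

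On the remaining steps I would use the decomposition $c_t(\xt,\ut)-c_t(\xtpi,\utpi)=R_1+\dots+R_5$ around the surrogate $F(M)$ and its optimistic version $\bar F_t(M)$. For the truncation-plus-concentration terms $R_1,R_5$: first relate $c_t(\xt,\ut)$ and $c_t(\xtpi,\utpi)$ to $f_t(M_t;\model_\star,w,\zt)$ and $f_t(M_\star;\model_\star,w,\zt)$, using that the tail $\Astar^{H}x_{t-H}$ in the unrolling \cref{eq:lqrUnrolling} decays geometrically (so the bounded-memory representation is accurate up to $\mathrm{poly}(\cdot)/T$) and a Lipschitz estimate to absorb the $w_t$-versus-$\hat w_t$ discrepancy, exactly as in \cite{agarwal2019online} and \cite{NEURIPS2020_565e8a41}; then, since within a subepoch $M_t$ is frozen and measurable with respect to data from strictly earlier (non-overlapping) blocks, pass from $f_t$ to its expectation $F$ by a martingale concentration bound, which contributes $\widetilde O(\costVar\sqrt T)$.

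For the optimism terms $R_2,R_4$, I would first prove an analogue of \cref{lemma:ocoParameterEst} for the unrolled-model estimate $\model_t$, which now carries an extra additive $2\sum_{s<t}\norm{e_s}^2$ correction coming from predicting the unrolled model with $\hat w$; I would then show $\sum_{t}\norm{e_t}^2 = \widetilde O(1)$ by proving $\hat w_t\to w_t$ at a $1/\sqrt t$ rate, which follows from the least-squares estimate $(A_t\;B_t)$ of $(\Astar\;\Bstar)$ converging --- this is precisely where the lower-bounded covariance $\EE w_tw_t\tran\succeq\wMin^2 I$ supplies persistent excitation (and where the $\wMin^{-1}$ factor in the final bound enters). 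The parameter $\pOptimism$ is chosen so that this confidence bound certifies the hypothesis of the optimism lemma, yielding $0\le F(M)-\bar F_t(M)\le 2\pOptimism\maxNoise\norm{V_{\tauIt}^{-1/2}\obsOp(M)}_F$ for all $M\in\mathcal{M}$ on the good event, hence $R_4\le 0$. To sum $R_2$ I would convert the anchor-time Frobenius term $\norm{V_{\tauIt}^{-1/2}\obsOp(M)}_F$ into the per-step quantity $\norm{V_t^{-1/2}\obs_{t-1}(M;\wwhat)}$ via the identity $\obs_{t-1}(M;w)=\obsOp(M)w_{t+1-2H:t-1}$ and the covariance lower bound (as written out in the extension section), then apply the elliptical-potential/harmonic bound of \cref{lemma:harmonicBound} adapted to the $\obs_t$ vectors together with a Jensen step and the $\widetilde O(1)$ noise-estimation corrections, giving $\widetilde O(\pOptimism\sqrt{\din_{\obs} T})$ with $\din_{\obs}=O(H(\dx+\du))$. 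Finally, $R_3$ (excess risk) is handled by a uniform-convergence statement in the spirit of \cref{lemma:ocoUniformConvergence}, but taken over both the loss randomness $\zt$ and the disturbance randomness $w_t$, which lets observations from the previous subepoch certify the expected performance of a fixed DAP policy in the current one; plugging $M_\star$ into the empirical optimistic objective (which $M_{\tauIJ}$ minimizes) then gives $R_3\le 0$ plus the uniform-convergence slack plus a $\hat w$-error slack, both of order $1/\sqrt{\text{subepoch length}}$, summing to $\widetilde O(\sqrt T)$ over all subepochs. Collecting the bounds, union-bounding over the $O(\log T)$ good events (least squares, the several martingale concentrations, the uniform-convergence net), and substituting $H,\pRegW,\pRegTheta,\pOptimism$ reads off the explicit constant.

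The step I expect to be the main obstacle is making every concentration argument go through \emph{despite} the cyclic dependence between $\model_t$, the empirical loss estimates, the chosen policies $M_t$, and the estimated noises $\hat w_t$: the block structure of \cref{alg:lqr} (epochs with a frozen model, exponentially growing subepochs with a frozen policy, all estimates built from strictly earlier non-overlapping intervals) is engineered exactly so that the relevant measurability holds, and verifying this rigorously --- in particular the modified least-squares bound with the $\sum\norm{e_s}^2$ term and the quantitative proof that $\hat w_t$ tracks $w_t$ --- is where the real work lies; everything else is careful but routine propagation of $\mathrm{poly}(\kappa,\gamma^{-1},\wMin^{-1},\costVar,\Bbound,\RM,\dx,\du,\log(T/\delta))$ factors through the $\Astar\ne 0$ unrolling.
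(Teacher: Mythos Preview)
Your plan is essentially the paper's proof: the same five-term decomposition around $F$ and $\bar F_t$, the same epoch/subepoch counting to absorb mixing periods, and the same mechanisms for each of $R_1,\dots,R_5$ (block uniform convergence for $R_1,R_5$; a perturbed least-squares confidence plus the optimism lemma for $R_2,R_4$; uniform convergence plus the minimization property of $M_{\tauIJ}$ for $R_3$). Two small corrections. First, the theorem is already stated for $\Pi_{\mathrm{DAP}}$, so the reduction via Lemma~5.2 of \cite{agarwal2019online} is not needed here.

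Second, and more substantively, you misplace the role of the covariance lower bound. The disturbance-estimation bound $\sum_t\norm{w_t-\hat w_t}^2\le C_w^2$ does \emph{not} come from persistent excitation: the covariance of $z_t=(x_t,u_t)$ is not lower bounded in the $u$-directions (the algorithm may play $u_t=0$ for long stretches), so an argument via operator-norm convergence of $(A_t\;B_t)$ at a $1/\sqrt t$ rate would fail. The paper instead writes $\norm{w_t-\hat w_t}\le\norm{(A_t\;B_t)-(\Astar\;\Bstar)}_{V_t}\norm{V_t^{-1/2}z_t}$, bounds the first factor uniformly by the self-normalized least-squares confidence, and sums the second factor via the elliptical-potential lemma --- no excitation needed, and no $\wMin$ appears. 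The covariance assumption enters \emph{only} where you later correctly invoke it, in the $R_2$ step converting $\norm{V_{\tauIt}^{-1/2}\obsOp(M)}_F$ to the per-step quantity through $\Sigma^{-1/2}$; that is the sole source of the $\wMin^{-1}$ in the final bound.
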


\paragraph{Structure.}
We begin with a preliminaries section (\cref{sec:lqrProofPrelims}) that states several results that will be used throughout, and are technical adaptations of existing results.
Next, in \cref{sec:lqrRegretDecomposition} we provide the body of the proof, decomposing the regret into logical terms, and stating the bound for each one.
Finally, in \cref{sec:truncationCostProof,sec:optimismCostProof,sec:excessRiskCostProof} we prove the bounds for each term.

\subsection{Preliminaries}
\label{sec:lqrProofPrelims}

\paragraph{Disturbance estimation.}
The success of our algorithm relies on the estimation of the system disturbances.
The following result, due to \cite{NEURIPS2020_565e8a41}, bounds this estimation error (see proof in \cref{sec:technicalProofs} for completeness).
\begin{lemma}
\label{lemma:disturbanceEstimation}
    Suppose that $\pRegW = 5 \kappa^2 \maxNoise^2 \RM^2 \Bbound^2 H \gamma^{-1}$, $H > \log T$, and $T \ge \dx$. With probability at least $1 - \delta$
    \begin{align*}
        \sqrt{\sum_{t=1}^{T} \norm{w_t - \hat{w}_t}^2}
        \le
        \wErr,
        \quad 
        \text{where}
        \quad 
        \wErr 
        = 
        10 \maxNoise \kappa \RM \Bbound \gamma^{-1} \sqrt{H (\dx + \du) (\dx^2 \kappa^2 + \du \Bbound^2) \log \frac{T}{\delta}}
        .
    \end{align*}
\end{lemma}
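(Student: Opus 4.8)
The plan is to bound the noise estimation error by relating it to the least-squares estimation error of the matrix $(\Astar\;\Bstar)$ from the data $\{(z_s, x_{s+1})\}$, since by construction $\hat w_t = \Pi_{B_2(\maxNoise)}[x_{t+1} - A_t x_t - B_t u_t]$ and $w_t = x_{t+1} - \Astar x_t - \Bstar u_t$ already satisfies $\norm{w_t}\le\maxNoise$, so the projection is $1$-Lipschitz and cannot increase the error: $\norm{w_t - \hat w_t} \le \norm{(\Astar - A_t\;\;\Bstar - B_t) z_t}$. Writing $\Gamma_t = (\Astar - A_t\;\;\Bstar - B_t)$, we get $\sum_{t=1}^T \norm{w_t-\hat w_t}^2 \le \sum_{t=1}^T \norm{\Gamma_t z_t}^2 \le \sum_{t=1}^T \norm{\Gamma_t}_{U_t}^2 \, z_t\tran U_t^{-1} z_t$, where $U_t = \lambda_w I + \sum_{s<t} z_s z_s\tran$ is the design matrix of the regression in \cref{ln:est-ab}. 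The first step is therefore a standard self-normalized least-squares confidence bound (as in \cref{lemma:ocoParameterEst}, citing \cite{abbasi2011regret}) giving $\norm{\Gamma_t}_{U_t}^2 \le \beta_T := O(\maxNoise^2 \dx^2 \log(T/\delta) + \lambda_w \norm{(\Astar\;\Bstar)}_F^2)$ uniformly in $t$ with probability $1-\delta$; combined with the elliptic-potential bound $\sum_t z_t\tran U_t^{-1} z_t \le O((\dx+\du)\log(\max_t\norm{z_t}^2 T/\lambda_w))$ this yields $\sum_t\norm{w_t-\hat w_t}^2 \le \beta_T \cdot O((\dx+\du)\log(\cdot))$.

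The second step is to control the magnitudes appearing in these bounds — both $\max_t\norm{z_t} = \max_t\norm{(x_t,u_t)}$ and $\norm{(\Astar\;\Bstar)}_F$ — in terms of the problem constants. Since $\Astar$ is $(\kappa,\gamma)$-strongly stable, $\norm{\Astar}\le\kappa$ and $\norm{\Bstar}\le\Bbound$, so $\norm{(\Astar\;\Bstar)}_F \le \sqrt{\dx}(\kappa+\Bbound)$. For the state-action magnitude: the algorithm's actions are $u_t = \sum_h M_t^{[h]}\hat w_{t-h}$ with $\norm{M_t}_F\le\RM$ and $\norm{\hat w_t}\le\maxNoise$, so $\norm{u_t}\le \RM\maxNoise\sqrt{H}$; unrolling the dynamics as in~\eqref{eq:lqrUnrolling} and using $\norm{\Astar^H x_{t-H}}$ decaying geometrically (strong stability gives $\norm{\Astar^i}\le\kappa(1-\gamma)^i$) together with $H>\log T$, one gets $\norm{x_t}\le O(\kappa\gamma^{-1}(1+\Bbound\RM)\maxNoise\sqrt{H}) + \kappa(1-\gamma)^H\norm{x_{t-H}}$, and since the transient dies after $O(\gamma^{-1}\log T)$ steps this gives a uniform bound $\RxuMax = O(\kappa\gamma^{-1}\Bbound\RM\maxNoise\sqrt{H})$ on $\norm{z_t}$ (using $\Bbound,\RM\ge1$). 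Plugging $\lambda_w = 5\kappa^2\maxNoise^2\RM^2\Bbound^2 H\gamma^{-1}$ into $\beta_T$ keeps the regularization term at the same order $O(\maxNoise^2\dx^2\log(T/\delta) + \kappa^2\maxNoise^2\RM^2\Bbound^2 H\gamma^{-1}\cdot\dx(\kappa^2+\Bbound^2))$, and collecting all factors yields exactly the stated $\wErr = 10\maxNoise\kappa\RM\Bbound\gamma^{-1}\sqrt{H(\dx+\du)(\dx^2\kappa^2+\du\Bbound^2)\log(T/\delta)}$ after absorbing constants.

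The main obstacle — and the reason this is nontrivial rather than a direct citation — is the circular dependency flagged in the text: the estimates $A_t, B_t$ are built from data $z_s = (x_s,u_s)$, but $u_s$ depends on past $\hat w$'s, which depend on earlier $A,B$ estimates, so the noise sequence driving the regression is not independent of the regressors in the naive way. Handling this requires the self-normalized martingale bound of \cite{abbasi2011regret}, which only needs $w_t$ to be a martingale-difference sequence adapted to the filtration generated by everything up to time $t$ (including $z_t$) — and indeed $w_t = x_{t+1} - \Astar x_t - \Bstar u_t$ is zero-mean given $\mathcal F_t$ since the $w_t$ are i.i.d.\ and independent of the chosen controls. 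Care is needed to verify the filtration is set up so that $z_t$ is $\mathcal F_t$-measurable while $w_t$ is a genuine increment, and that the algorithm's infrequent policy updates (which reference $\hat w$'s only from strictly earlier, non-overlapping intervals) do not break this; once the adaptedness is correctly stated, the rest is the routine concentration-plus-potential computation sketched above. I would also need to double-check that the projection step does not interfere with the martingale structure — it does not, because $\hat w_t$ enters only through $z_{t+1} = (x_{t+1}, u_{t+1})$ and $u_{t+1}$ is $\mathcal F_{t+1}$-measurable, while the bound is on $w_t - \hat w_t$ with $\hat w_t$ already determined by $\mathcal F_{t+1}$-data.
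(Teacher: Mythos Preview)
Your proposal is correct and follows essentially the same route as the paper: projection contraction to reduce $\norm{w_t-\hat w_t}$ to $\norm{[(A_t\;B_t)-(\Astar\;\Bstar)]z_t}$, the self-normalized least-squares bound of \cite{abbasi2011regret} to control $\norm{\Gamma_t}_{U_t}$, the elliptic-potential sum for $\sum_t z_t\tran U_t^{-1} z_t$, and deterministic bounds on $\norm{z_t}$ and $\norm{(\Astar\;\Bstar)}_F$ via strong stability. Two small remarks: the paper uses the slightly sharper $\norm{(\Astar\;\Bstar)}_F^2 \le \dx\kappa^2 + \du\Bbound^2$ (splitting the trace), which is what produces the $(\dx^2\kappa^2+\du\Bbound^2)$ factor in $\wErr$; and your ``main obstacle'' discussion, while not wrong, is more elaborate than needed---the self-normalized bound requires only that $z_t$ be $\mathcal F_{t}$-measurable and $w_t$ a martingale increment given $\mathcal F_{t}$, which holds immediately since $u_t$ (however it was computed) is fixed before $w_t$ is drawn.
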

As noted by \cite{NEURIPS2020_565e8a41}, the quality of the disturbance estimation does not depend on the choices of the algorithm, i.e., we can recover the noise without any need for exploration. This is in stark contrast to the estimation of the system matrices $\Astar, \Bstar$, which requires exploration.

\paragraph{Estimating the unrolled model.}
Recall from \cref{eq:lqrUnrolling} that by recursing over the transition model we get that
\begin{align*}
    x_t
    =
    \model_\star \tilde{\obs}_{t-1} + {w}_{t-1} + \Astar^{H} x_{t-H}
    ,
\end{align*}
where $\tilde{\obs}_{t-1} = \brk[s]{u_{t-H}\tran, \ldots, u_{t-1}\tran, w_{t-H}\tran, \ldots, w_{t-2}\tran}$. Since we assume $\Astar$ to be strongly stable, the last term is negligible and we essentially recover the standard setting for least squares estimation.
However, notice that $\tilde{\obs}_{t-1}$ cannot be computed as it requires exact knowledge of the disturbances. We thus define its proxy
$
\obs_{t-1}
=
\brk[s]{u_{t-H}\tran, \ldots, u_{t-1}\tran, \hat{w}_{t-H}\tran, \ldots, \hat{w}_{t-2}\tran}
,
$
which replaces $w$ with $\hat{w}$. rewriting the above, we get that
\begin{align*}
    x_t
    =
    \model_\star \obs_{t-1} + {w}_{t-1} + e_{t-1}
    ,
\end{align*}
where
$
    e_{t-1}
    =
    \Astar^{H} x_{t-H}
    +
    \sum_{h=1}^{H} \Astar^{h-1} ({w}_{t-h} - \hat{w}_{t-h})
$
is a bias term, which, while small, is not negligible.
The following result takes the least squares estimation error bound of \citealp{abbasi2011regret}, and augments it with a sensitivity analysis with respect to the observations (see proof in \cref{sec:technicalProofs}).

\begin{lemma} \label{lemma:lqrParameterEst}
Let 
$
\Delta_t
=
\model_\star - \model_t
,
$
and suppose that $\norm{\obs_t}^2 \le \pRegTheta, T \ge \dx$.
With probability at least $1 - \delta$, we have for all $1 \le t \le T$
\begin{align*}
    \norm{\Delta_t}_{V_t}^2
    \le
    \tr{\Delta_t\tran V_t \Delta_t}
    \le
    16 \maxNoise^2 \dx^2 \log \brk*{\frac{ T}{\delta}}
    +
    4\pRegTheta \norm{\model_\star}_F^2
    +
    2{\sum_{s=1}^{t-1}\norm{e_s}^2}.
\end{align*}
If we also have that
$
\pRegTheta
=
2 \maxNoise^2 \RM^2 H^2
,
$
and that 
$
\sum_{t=1}^{T} \norm{w_t - \hat{w}_t}^2
\le
\wErr^2
$
(see \cref{lemma:disturbanceEstimation})
then
\begin{align*}
    \norm{\Delta_t}_{V_t}
    \le
    \sqrt{\tr{\Delta_t\tran V_t \Delta_t}}
    \le
    21 \maxNoise \RM \Bbound \kappa^2 H 
    \sqrt{\gamma^{-3} (\dx + \du) (\dx^2 \kappa^2 + \du \Bbound^2) \log \frac{T}{\delta}}
    ,
\end{align*}
and
$
    \norm{\brk{\model_{t} \; I}}_F
    \le
    17 \Bbound \kappa^2 
    \sqrt{\gamma^{-3} (\dx + \du) (\dx^2 \kappa^2 + \du \Bbound^2) \log \frac{T}{\delta}}
    .
$
\end{lemma}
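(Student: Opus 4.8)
The plan is to run the regularized least-squares argument of \citet{abbasi2011regret} on the unrolled regression $x_{s+1} = \model_\star \obs_s + w_s + e_s$ from \cref{eq:lqrUnrolling}, with the twist that the ``noise'' driving this regression splits into a genuine martingale increment $w_s$ and a bias $e_s = \Astar^H x_{s+1-H} + \sum_{h=1}^H \Astar^{h-1}(w_{s+1-h} - \hat w_{s+1-h})$ coming from the use of estimated disturbances. Since $\model_t$ minimizes $\sum_{s<t}\norm{\model\obs_s - x_{s+1}}^2 + \pRegTheta\norm{\model}_F^2$ and $V_t = \pRegTheta I + \sum_{s<t}\obs_s\obs_s\tran$, the normal equations give $\Delta_t V_t = \pRegTheta\model_\star - \sum_{s<t}(w_s+e_s)\obs_s\tran$ for $\Delta_t = \model_\star - \model_t$, so that $\tr{\Delta_t\tran V_t \Delta_t} = \norm{(\Delta_t V_t)V_t^{-1/2}}_F^2 \le 2\norm{(\pRegTheta\model_\star - \sum_{s<t} w_s\obs_s\tran)V_t^{-1/2}}_F^2 + 2\norm{(\sum_{s<t} e_s\obs_s\tran)V_t^{-1/2}}_F^2$, and I would bound the two pieces separately.

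\emph{Martingale piece.} The crucial point is that $\obs_s$ is assembled only from controls and \emph{estimated} disturbances indexed strictly before $s$, hence is measurable with respect to the filtration generated by $w_1,\dots,w_{s-1}$ together with the cost randomness, under which $w_s$ is a fresh, independent, $\maxNoise$-bounded (hence sub-Gaussian) increment. Thus $\sum_{s<t} w_s\obs_s\tran$ is a matrix martingale adapted to the design $V_t$, and the self-normalized tail bound of \citet{abbasi2011regret} --- applied over the $\dx$ coordinates of $w_s$, with a $\log$-determinant estimate on $\det V_t$ coming from $\norm{\obs_s}^2 \le \pRegTheta$ --- yields, uniformly over $t \le T$ on an event of probability at least $1-\delta$, a term of order $\maxNoise^2\dx^2\log(T/\delta)$; combined with $\pRegTheta^2\norm{\model_\star V_t^{-1/2}}_F^2 \le \pRegTheta\norm{\model_\star}_F^2$ (from $V_t \succeq \pRegTheta I$) this produces the $16\maxNoise^2\dx^2\log(T/\delta) + 4\pRegTheta\norm{\model_\star}_F^2$ part.

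\emph{Bias piece.} Here $e_s$ is random and correlated with $\obs_s$, so the martingale machinery does not apply and I would argue deterministically: collecting $E = (e_1\ \cdots\ e_{t-1})$ and $Z = (\obs_1\ \cdots\ \obs_{t-1})$, the identity $Z\tran V_t^{-1} Z = I - \pRegTheta(\pRegTheta I + Z\tran Z)^{-1} \preceq I$ gives $\norm{(\sum_{s<t} e_s\obs_s\tran) V_t^{-1/2}}_F^2 = \tr{E\tran E\, Z\tran V_t^{-1} Z} \le \tr{E\tran E} = \sum_{s<t}\norm{e_s}^2$, which is the remaining $2\sum_{s<t}\norm{e_s}^2$ and completes the first displayed inequality.

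\emph{Quantitative bound and main obstacle.} For the second statement I would substitute $\pRegTheta = 2\maxNoise^2\RM^2 H^2$ and estimate the two data-dependent quantities. Strong stability ($\norm{\Astar^j} \le \kappa(1-\gamma)^j \le \kappa$) gives $\norm{\model_\star}_F^2 = O(\kappa^2\gamma^{-1}(\du\Bbound^2+\dx))$, so the regularization term is of lower order. For the bias, the term $\Astar^H x_{s+1-H}$ contributes only $O(1)$ to $\sum_{s<t}\norm{e_s}^2$, since $\norm{\Astar^H} \le \kappa(1-\gamma)^H \le \kappa/T$ (as $H = \gamma^{-1}\log T$) while states stay bounded by strong stability and bounded noise/actions; and a weighted Cauchy--Schwarz exploiting the same geometric decay gives $\norm{\sum_{h}\Astar^{h-1}(w_{s+1-h}-\hat w_{s+1-h})}^2 \le \kappa^2\gamma^{-1}\sum_h (1-\gamma)^{h-1}\norm{w_{s+1-h}-\hat w_{s+1-h}}^2$, so summing over $s$ and invoking \cref{lemma:disturbanceEstimation} yields $\sum_{s<t}\norm{e_s}^2 \le \kappa^2\gamma^{-2}\wErr^2 + O(1)$; plugging in $\wErr$ and collecting the (all lower-order compared to the $\wErr$ term) pieces gives the claimed bound on $\norm{\Delta_t}_{V_t}$, and then $\norm{(\model_t \; I)}_F \le \norm{\model_\star}_F + \norm{\Delta_t}_F + \sqrt{\dx}$ with $\norm{\Delta_t}_F \le \norm{\Delta_t}_{V_t}/\sqrt{\pRegTheta}$ (since $V_t \succeq \pRegTheta I$) gives the last claim. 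The main obstacle is the probabilistic bookkeeping in the martingale piece: the nested epoch/subepoch structure and the on-the-fly noise estimates create a feedback loop between the model estimate, the loss estimate, the policy, and $\hat w$, and one must verify carefully that $\obs_s$ is still predictable with respect to a filtration under which $w_s$ is an independent (sub-)Gaussian increment --- exactly what licenses the self-normalized bound. The genuinely new ingredient relative to \cref{lemma:ocoParameterEst} is the bias $e_s$, controlled above by the deterministic $Z\tran V_t^{-1} Z \preceq I$ bound and then shown to be small via \cref{lemma:disturbanceEstimation}; full details are deferred to \cref{sec:technicalProofs}.
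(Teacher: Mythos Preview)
Your proposal is correct and essentially the same as the paper's proof: the paper introduces an auxiliary estimator $\tilde{\model}_t$ fitted to the debiased targets $\tilde x_{s+1} = \model_\star\obs_s + w_s$, applies \cref{lemma:LSE} to bound $\tr{(\tilde{\model}_t-\model_\star)^\top V_t(\tilde{\model}_t-\model_\star)}$, and then bounds $\tr{(\model_t-\tilde{\model}_t)^\top V_t(\model_t-\tilde{\model}_t)} \le \sum_s\norm{e_s}^2$ via exactly your $O_t V_t^{-1} O_t^\top \preceq I$ argument---your direct normal-equation split into martingale and bias pieces is just a cosmetic repackaging of this. Your weighted Cauchy--Schwarz on $\sum_s\norm{e_s}^2$ is in fact a $\log T$ factor sharper than the paper's cruder step $\sum_s\sum_h\norm{w_{s-h}-\hat w_{s-h}}^2 \le H\wErr^2$, but this gets absorbed into the constants either way.
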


\paragraph{DAP bounds and properties.}
We need several properties that relate to the DAP parameterization and will be useful throughout.
To that end, we have the following lemma (see proof in \cref{sec:technicalProofs}).
\begin{lemma}
\label{lemma:technicalParameters}
    We have that for all $\ww$ such that $\norm{w_t} \le \maxNoise$, $M \in \mathcal{M}$, and $t \le T$
    \begin{enumerate}
        \item 
        $
        \norm{\brk*{\model_\star \; I}}_F 
        \le
        \Bbound \kappa \sqrt{2 \dx / \gamma}
        ;
        $
        
        \item 
        $
        \norm{u_t(M; \ww)} 
        \le
        \maxNoise \RM \sqrt{H}
        ;
        $
        
        \item 
        $
        \norm{(\obs_t(M; \ww)\tran \; w_{t}\tran)}
        \le
        \sqrt{2} \maxNoise \RM H
        ;
        $
        
        \item 
        $
        \max\brk[c]{\norm{x_t(M; \model_\star; \ww)}, \norm{x_t^{\pi_M}}, \norm{x_t}}
        \le
        2 \kappa \Bbound \maxNoise \RM \sqrt{H} / \gamma
        ;
        $
        
        \item 
        $
        \norm{u_t(M; \ww) - u_t(M; \ww')}
        \le
        \RM \norm{w_{t-H:t-1} - w_{t-H:t-1}'}
        ;
        $
        
        \item 
        $
        \sqrt{\norm{\obs_t(M; \ww) - \obs_t(M; \ww')}^2
        +
        \norm{w_t - w_t'}^2}
        \le
        \RM \sqrt{H} \norm{w_{t-2H:t-1} - w_{t-2H:t-1}'}
        .
        $
    \end{enumerate}
\end{lemma}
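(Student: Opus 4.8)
The plan is to prove the six estimates one at a time; each is elementary and they share the same toolkit, which I would set up first. From strong stability, $\Astar = QLQ^{-1}$ with $\norm{L}\le 1-\gamma$ and $\norm{Q}\norm{Q^{-1}}\le\kappa$, hence $\norm{\Astar^i}\le\kappa(1-\gamma)^i$, and consequently $\sum_{i\ge0}(1-\gamma)^i=1/\gamma$ and $\sum_{i\ge0}(1-\gamma)^{2i}\le 1/\gamma$. From $M\in\mathcal{M}$ we have $\norm{M}_F\le\RM$, so $\sum_{h=1}^{H}\norm{M^{[h]}}^2\le\sum_{h}\norm{M^{[h]}}_F^2=\norm{M}_F^2\le\RM^2$. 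Finally $\norm{w_t}\le\maxNoise$ and $\norm{\hat w_t}\le\maxNoise$ (the latter by the Euclidean-ball projection in \cref{alg:lqr}), and I would freely use $\norm{X}\le\norm{X}_F\le\sqrt{\dx}\norm{X}$, $\norm{XY}_F\le\norm{X}\norm{Y}_F$, and Cauchy--Schwarz to collapse sums of $H$ terms.

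For part~(1) I would write out the block structure $\model_\star=(\Astar^{H-1}\Bstar,\ldots,\Bstar,\Astar^{H-1},\ldots,\Astar)$ and bound $\norm{(\model_\star\;I)}_F^2=\sum_{i=0}^{H-1}\norm{\Astar^i\Bstar}_F^2+\sum_{i=0}^{H-1}\norm{\Astar^i}_F^2$, the appended $I$ block being absorbed as the $i=0$ term of the last sum; then $\norm{\Astar^i\Bstar}_F\le\norm{\Astar^i}\norm{\Bstar}_F\le\sqrt{\dx}\kappa\Bbound(1-\gamma)^i$ and $\norm{\Astar^i}_F\le\sqrt{\dx}\kappa(1-\gamma)^i$, so summing the two geometric series and using $\Bbound\ge 1$ to merge terms yields the claimed $\Bbound\kappa\sqrt{2\dx/\gamma}$. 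Part~(2) is the triangle inequality and Cauchy--Schwarz applied to $u_t(M;\ww)=\sum_{h}M^{[h]}w_{t-h}$, giving $\norm{u_t(M;\ww)}\le\maxNoise\sum_h\norm{M^{[h]}}\le\maxNoise\sqrt{H}\norm{M}_F$. Part~(3) follows by stacking: $\norm{(\obs_t(M;\ww)\tran\;w_t\tran)}^2$ is a sum of $H$ terms $\norm{u_s(M;\ww)}^2\le\maxNoise^2\RM^2 H$ together with $H$ terms $\norm{w_s}^2\le\maxNoise^2$, which combine (using $\RM,H\ge1$) to at most $2\maxNoise^2\RM^2 H^2$. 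Item~(5) is immediate from $u_t(M;\ww)-u_t(M;\ww')=\sum_h M^{[h]}(w_{t-h}-w_{t-h}')$ and Cauchy--Schwarz with $\norm{M}_F\le\RM$. For item~(6) I would expand $\obs_t(M;\ww)-\obs_t(M;\ww')$ together with $w_t-w_t'$ into its $u$-blocks and $w$-blocks; the $w$-blocks contribute $\norm{w_r-w_r'}^2$ directly, the $u$-blocks are bounded via item~(5), and when the double sum over the $u$-blocks is rearranged each disturbance difference is charged by at most $H$ of the blocks, producing the $\RM\sqrt{H}$ prefactor.

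Part~(4) is the only one that uses the dynamics rather than just the DAP parameterization. For the realized state $x_t$ and for $x_t^{\pi_M}$, I would unroll the recursion $x_{s+1}=\Astar x_s+\Bstar u_s+w_s$ fully as $x_t=\sum_{k\ge0}\Astar^k(\Bstar u_{t-1-k}+w_{t-1-k})$ (a finite sum, the initial state being zero), bound $\norm{\Astar^k}\le\kappa(1-\gamma)^k$, bound every action norm by part~(2) (with $w$ replaced by $\hat w$ in the realized case, still of norm at most $\maxNoise$), and sum the geometric series to obtain $(\kappa/\gamma)(\Bbound\maxNoise\RM\sqrt{H}+\maxNoise)\le 2\kappa\Bbound\maxNoise\RM\sqrt{H}/\gamma$. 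For the surrogate $x_t(M;\model_\star;\ww)=\model_\star\obs_{t-1}(M;\ww)+w_{t-1}$, plugging in the block form of $\model_\star$ shows it equals exactly the length-$H$ truncation $\sum_{i=1}^{H}\Astar^{i-1}(\Bstar u_{t-i}(M;\ww)+w_{t-i})$ of that same unrolling, so the identical geometric estimate applies.

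I do not anticipate a genuine obstacle here: every step is a one-line estimate. The only things requiring care are bookkeeping — matching the exact constants ($\sqrt2$ in parts~(1) and~(3), the factor $2$ in part~(4)) and the powers of $\dx$, $H$, $\gamma^{-1}$, all of which come out by using $\kappa,\Bbound,\RM\ge1$ and $\gamma\le1$ to absorb lower-order terms — and the multiplicity count in part~(6), where one must check that as $s$ ranges over the $H$ consecutive indices of the $u$-blocks and $h$ over $1,\ldots,H$, each value $s-h$ is attained by at most $H$ pairs.
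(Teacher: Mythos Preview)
Your proposal is correct and follows essentially the same approach as the paper's own proof: each item is handled by the same elementary tools (strong stability to get $\norm{\Astar^i}\le\kappa(1-\gamma)^i$ and the geometric sum, Cauchy--Schwarz to collapse the $H$ blocks, block-by-block expansion of $\obs_t$), and your treatment of part~(4) via unrolling matches the paper's. The only cosmetic difference is that the paper works with traces directly in part~(1) rather than $\norm{\cdot}_F$ of each block, which is the same computation.
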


\paragraph{Surrogate and optimistic costs.}
We summarize the useful properties of the surrogate costs.
To that end,
with some abuse of notation, we extend the definition of the surrogate and optimistic cost functions to include the dependence on their various parameters:
\begin{equation}
\label{eq:ft-gen-defs}
\begin{aligned}
        f_t(M; \model, \ww, \zt[])
        &=
        c(x_t(M; \model, \ww), u_t(M; \ww); \zt[])
        \\
        \bar{f}_t(M; \model, V, \ww, \zt[])
        &=
        c_t(x_t(M; \model, \ww), u_t(M; \ww); \zt[])
        -
        \pOptimism \maxNoise \norm{V^{-1/2} \obsOp(M)}_{\infty}
        ,
\end{aligned}
\end{equation}
Recalling that $\ww, \wwhat$ are the real and estimated noise sequences respectively, we use the following shorthand notations throughout:
\begin{equation}
\label{eq:ft-short-defs}
\begin{aligned}
    f_t(M)
    &=
    f_t(M; \model_{\star}, \ww, \zt)
    \\
    \bar{f}_t(M)
    &=
    \bar{f}_t(M; \model_{\tauIt}, V_{\tauIt}, \ww, \zt)
    \\
    \hat{f}_t(M)
    &=
    \bar{f}_t(M; \model_{\tauIt}, V_{\tauIt}, \wwhat, \zt)
    ,
\end{aligned}
\end{equation}
where
$
i(t)
=
\max\brk[c]{i : \tauI \le t}
.
$
The following lemma characterizes the properties of $f_t, \bar{f}_t$ as a function of the various parameters (see proof in \cref{sec:technicalProofs}).
\begin{lemma}
\label{lemma:fbarProperties}
    Define the functions
    \begin{align*}
        \maxF(\model) 
        =
        \max\brk[c]*{
        1
        ,
        2\costVar
        +
        5 \RM \maxNoise H \norm{(\model \; I)}
        }
        ,
        \qquad
        \pLipW(\model)
        =
        \sqrt{3H} \RM \norm{(\model \; I)}
        .
    \end{align*}
    For any $\zt[], \zt[]',\ww, \ww'$ with $\norm{w_t}, \norm{w'_t} \le \maxNoise$ and $M, M'$ with $\norm{M}_F, \norm{M'}_F \le \RM$,
    we have:
    \begin{enumerate}
        \item 
        $
        \abs{f_t(M; \model_\star, \ww, \zt[]) - f_t(M; \model_\star, \ww', \zt[]')}
        \le
        2\costVar
        +
        5 \kappa \gamma^{-1} \Bbound \RM \maxNoise \sqrt{H}
        =
        \maxF^{\star}
        ;
        $
        
        \item
        $
        \abs{
        \bar{f}_t(M; \model, V, \ww, \zt[]) 
        -
        \bar{f}_t(M; \model, V, \ww', \zt[]')
        }
        \le
        \maxF(\model)
        $;
        \item
        $
        \abs{
        \bar{f}_t(M; \model, V, \ww)
        -
        \bar{f}_t(M; \model, V, \ww')
        }
        \le
        \pLipW(\model)
        \norm{w_{t-2H:t-1} - w_{t-2H:t-1}'}_F
        ;
        $
    \end{enumerate}
    Additionally, if
    $
    \norm{\brk{\model \; I}}_F
    \le
    17 \Bbound \kappa^2 
    \sqrt{\gamma^{-3} (\dx + \du) (\dx^2 \kappa^2 + \du \Bbound^2) \log \frac{12 T}{\delta}}
    ,
    $
    then:
    \begin{align*}
        \maxF(\model)
        \le
        2\costVar
        +
        3 \pOptimism / (H \sqrt{\dx(\dx+\du)})
        ,
        \quad
        \text{and}
        \;\;
        \pLipW(\model)
        \le
        \pOptimism / (\maxNoise \sqrt{\dx (\dx + \du) H^3})
        .
    \end{align*}
\end{lemma}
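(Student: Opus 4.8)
My plan is to unwind each claim from the definitions in \eqref{eq:ft-gen-defs} and reduce everything to the elementary DAP bounds already collected in \cref{lemma:technicalParameters}, together with the two cost assumptions (boundedness of $|c(\cdot;\zeta)-c(\cdot;\zeta')|$ by $\costVar$ and the $1$-Lipschitz property). For item~1, I would write $f_t(M;\model_\star,\ww,\zt[])-f_t(M;\model_\star,\ww',\zt[]')$ as a sum of three differences: replacing $\ww'$ by $\ww$ inside $c$, replacing $\zt[]'$ by $\zt[]$, and then bounding the first by Lipschitzness times $\norm{(x_t(M;\model_\star,\ww)-x_t(M;\model_\star,\ww'),\,u_t(M;\ww)-u_t(M;\ww'))}$ — but actually it is cleaner to bound the $\ww$-difference crudely by the triangle inequality: $\abs{c(x,u;\zt[])-c(x',u';\zt[]')}\le \abs{c(x,u;\zt[])-c(x,u;\zt[]')}+\abs{c(x,u;\zt[]')-c(x',u';\zt[]')}\le\costVar + \norm{(x-x',u-u')}$, and then bound $\norm{x_t(M;\model_\star,\ww)}$, $\norm{u_t(M;\ww)}$ (and their primed versions) each by the magnitude bounds in \cref{lemma:technicalParameters} (items 2 and 4). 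Summing $5\kappa\gamma^{-1}\Bbound\maxNoise\RM\sqrt H$ from the state bound and a smaller term from the action bound gives the stated $\maxF^\star$.

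For items~2 and~3, the key observation is that the optimism term $\pOptimism\maxNoise\norm{V^{-1/2}\obsOp(M)}_\infty$ does not depend on $\ww$ or $\zt[]$ at all, so it cancels in both differences $\bar f_t(M;\model,V,\ww,\zt[])-\bar f_t(M;\model,V,\ww',\zt[]')$ and $\bar f_t(M;\model,V,\ww)-\bar f_t(M;\model,V,\ww')$. Thus item~2 reduces exactly to bounding $\abs{c_t(x_t(M;\model,\ww),u_t(M;\ww);\zt[])-c_t(x_t(M;\model,\ww'),u_t(M;\ww');\zt[]')}$, which by the same split as above is at most $2\costVar + \norm{(x_t(M;\model,\ww)-x_t(M;\model,\ww'),u_t(M;\ww)-u_t(M;\ww'))}$; bounding the state and action magnitudes (now through a general $\model$, so $\norm{x_t(M;\model,\ww)}=\norm{\model\obs_{t-1}(M;\ww)+w_{t-1}}\le\norm{(\model\;I)}\cdot\norm{(\obs_{t-1}(M;\ww)\tran\;w_{t-1}\tran)}\le \sqrt2\RM H\norm{(\model\;I)}$ by item~3 of \cref{lemma:technicalParameters}, plus $\maxNoise\RM\sqrt H$ for the action) yields $2\costVar+5\RM\maxNoise H\norm{(\model\;I)}\le\maxF(\model)$. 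For item~3, where $\zt[]$ is fixed and we use the \emph{expectation over the common part} (or rather take $\zt[]=\zt[]'$), the $1$-Lipschitz bound gives $\abs{\bar f_t(M;\model,V,\ww)-\bar f_t(M;\model,V,\ww')}\le\norm{(x_t(M;\model,\ww)-x_t(M;\model,\ww'),u_t(M;\ww)-u_t(M;\ww'))}$, and now I use the \emph{sensitivity} bounds in items~5 and~6 of \cref{lemma:technicalParameters}: $\norm{u_t(M;\ww)-u_t(M;\ww')}\le\RM\norm{w_{t-H:t-1}-w'_{t-H:t-1}}$ and $\norm{x_t(M;\model,\ww)-x_t(M;\model,\ww')}=\norm{\model(\obs_{t-1}(M;\ww)-\obs_{t-1}(M;\ww'))+(w_{t-1}-w'_{t-1})}\le\norm{(\model\;I)}\sqrt{\norm{\obs_{t-1}(M;\ww)-\obs_{t-1}(M;\ww')}^2+\norm{w_{t-1}-w'_{t-1}}^2}\le\norm{(\model\;I)}\RM\sqrt H\norm{w_{t-2H:t-1}-w'_{t-2H:t-1}}$; combining the two coordinates (the action difference involves a sub-window and $\norm{(\model\;I)}\ge1$) and loosening constants gives the claimed Lipschitz constant $\pLipW(\model)=\sqrt{3H}\RM\norm{(\model\;I)}$.

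For the final ``Additionally'' part, I would simply substitute the hypothesis $\norm{(\model\;I)}_F\le 17\Bbound\kappa^2\sqrt{\gamma^{-3}(\dx+\du)(\dx^2\kappa^2+\du\Bbound^2)\log(12T/\delta)}$ into the expressions $\maxF(\model)=\max\{1,\,2\costVar+5\RM\maxNoise H\norm{(\model\;I)}\}$ and $\pLipW(\model)=\sqrt{3H}\RM\norm{(\model\;I)}$, and check that the resulting quantities are dominated by $2\costVar+3\pOptimism/(H\sqrt{\dx(\dx+\du)})$ and $\pOptimism/(\maxNoise\sqrt{\dx(\dx+\du)H^3})$ respectively, by plugging in the definition of $\pOptimism = 30\maxNoise\RM\Bbound\kappa^2(\dx+\du)H^2\sqrt{\dx\gamma^{-3}(\dx^2\kappa^2+\du\Bbound^2)\log(12T/\delta)}$ from \cref{thm:lqrRegretFull}; this is a routine numerical comparison of constants ($5\cdot17=85$ against $30\cdot3=90$, etc.) and of the matching powers of $H$, $\dx$, $\du$, $\kappa$, $\gamma$, $\Bbound$. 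The only mildly delicate point throughout is making sure the window indices line up — that $x_t(M;\model,\ww)$ depends on $w_{t-2H:t-1}$, $\obs_{t-1}$ on $w_{t-2H:t-2}$, and $u_t$ on $w_{t-H:t-1}$ — so that the sensitivity bounds from \cref{lemma:technicalParameters} are applied to the correct sub-vectors; I expect this bookkeeping, rather than any real inequality, to be the main thing to get right.
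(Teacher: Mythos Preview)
Your proposal is correct and follows essentially the same route as the paper: cancel the optimism term (it is independent of $\ww,\zt[]$), reduce everything to the $1$-Lipschitz and bounded-variation assumptions on $c$, and invoke the DAP magnitude and sensitivity bounds from \cref{lemma:technicalParameters}; the final ``Additionally'' part is, as you say, a direct numerical substitution of $\pOptimism$. The only cosmetic difference is that for items~1 and~2 you bound $\norm{(x-x',u-u')}$ crudely via $\norm{(x,u)}+\norm{(x',u')}$ and the magnitude bounds (items~2--4 of \cref{lemma:technicalParameters}), whereas the paper bounds the \emph{difference} $x_t(M;\model_\star,\ww)-x_t(M;\model_\star,\ww')$ directly through the recursive formula and then uses $\norm{w_{t-2H:t-1}-w'_{t-2H:t-1}}\le\maxNoise\sqrt{8H}$; both routes land on the same constants. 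Two tiny slips to fix when you write it up: $|c(x,u;\zt[])-c(x,u;\zt[]')|\le 2\costVar$ (not $\costVar$), and you dropped the factor $\maxNoise$ in the intermediate bound $\norm{x_t(M;\model,\ww)}\le\sqrt2\,\maxNoise\RM H\norm{(\model\;I)}$.
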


\subsection{Regret Decomposition}
\label{sec:lqrRegretDecomposition}

As seen in \cref{eq:lqrUnrolling}, the bounded state representation is such that it depends on the last $H$ decisions of the algorithm. In general, this could be analyzed as an online convex optimization with memory problem, which requires that the cumulative change in predictions be small. However, we use an epoch schedule that ensures a poly-logarithmic number of prediction changes (low switching). This  implies that outside of the first $2H$ rounds in each sub-epoch, the $2H$ step history of each round is fixed, essentially making for a problem with no memory.

The following technical lemma bounds $\nEpochs$, the number of epochs, and $\nSubEpochs$, the number of sub-epochs in epoch $i$ (see proof in \cref{sec:lqrProofs}).
\begin{lemma}
\label{lemma:epochLengths}
We have that $\nEpochs \le 2(\dx+\du)H \log T$ and $\nSubEpochs \le 2 \log T$.
\end{lemma}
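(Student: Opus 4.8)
The plan is to bound the two quantities separately using the determinant-based potential argument that is standard in linear bandit / adaptive control analyses. For the number of epochs $\nEpochs$, recall that a new epoch begins precisely when $\det(V_{t+1}) > 2\det(V_{\tauI})$, so every epoch (except possibly the last) at least doubles the determinant of $V$. Hence $\nEpochs \le 1 + \log_2\bigl(\det(V_{T+1})/\det(V_1)\bigr)$. To control the right-hand side I would use $\det(V_1) = \det(\lambda_\model I) = \lambda_\model^{\,d}$ with $d = \dx(2H-1) + \du H$ the ambient dimension of $\obs_t$ (the dimension of $\obsOp(M)$'s domain), and bound $\det(V_{T+1})$ via the AM–GM inequality $\det(V_{T+1}) \le (\tr(V_{T+1})/d)^{d}$ together with $\tr(V_{T+1}) = \tr(\lambda_\model I) + \sum_{t=1}^{T}\norm{\obs_t}^2 \le d\lambda_\model + T\pRegTheta$, using the bound $\norm{\obs_t}^2 \le \pRegTheta$ from \cref{lemma:lqrParameterEst} (which in turn follows from \cref{lemma:technicalParameters}, items 2–3, applied with the estimated noises $\hat w_t$, all of norm at most $\maxNoise$). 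This yields $\nEpochs \le 1 + d\log_2(1 + T\pRegTheta/(d\lambda_\model))$, and substituting $\pRegTheta = 2\maxNoise^2\RM^2 H^2$, $\lambda_\model = 2\maxNoise^2\RM^2 H^2$ makes the ratio $T\pRegTheta/(d\lambda_\model) = T/d \le T$, so the bound collapses to $\nEpochs \le 1 + d\log_2 T \le 2(\dx+\du)H\log T$ for $T$ large enough (absorbing the additive $1$ and the $\dx(2H-1)$ vs.\ $2(\dx+\du)H$ slack into the stated constant).

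For the number of sub-epochs $\nSubEpochs$ within a fixed epoch $i$, recall the sub-epoch rule: a new sub-epoch starts when $t+1-\tauI > 2(\tauIJ - \tauI)$, i.e.\ when the elapsed time since the epoch began exceeds twice the time at which the current sub-epoch began (measured from the epoch start). Writing $\ell_j := \tauIJ - \tauI$ for the offset of the $j$-th sub-epoch's start within epoch $i$, the rule gives $\ell_{j+1} > 2\ell_j$ once $\ell_j \ge 1$; since $\ell_2 = 2H$ (from the "start new epoch" line), the offsets grow at least geometrically, so after at most $\log_2(T/2H) + O(1)$ sub-epochs the offset would exceed $T$, which is impossible. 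Hence $\nSubEpochs \le 2 + \log_2 T \le 2\log T$ for $T \ge 16$ (say), again absorbing constants.

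The routine parts are the two potential/geometric-growth computations; the only mild subtlety — and the step I'd be most careful about — is making sure the bound $\norm{\obs_t}^2 \le \pRegTheta$ genuinely holds for the $\obs_t$ built from the \emph{estimated} noises $\hat w_t$ that the algorithm actually uses (not the true $w_t$), which is fine because the noise estimates are explicitly projected onto $B_2(\maxNoise)$ in \cref{ln:est-noise}, so $\norm{\hat w_t} \le \maxNoise$ exactly, and then \cref{lemma:technicalParameters} items 2–3 apply verbatim with $\ww$ replaced by $\wwhat$. The other point to keep straight is the exact ambient dimension $d$ of $V_t$ so that the logarithmic factor in $\nEpochs$ comes out as $(\dx+\du)H$ up to constants; since $d = \dx(2H-1) + \du H \le 2(\dx+\du)H$, this is immediate. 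Everything else is bookkeeping of constants to match the clean statement.
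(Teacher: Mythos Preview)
Your proposal is correct and follows essentially the same approach as the paper: both use the determinant-doubling argument for $\nEpochs$ (the paper bounds $\det(V_T)/\det(V_1)$ via $\det(A)\le\norm{A}^d$ rather than AM--GM, but the computations are equivalent) and the geometric growth of sub-epoch offsets for $\nSubEpochs$. One small slip: the ambient dimension of $\obs_t$ is $\du H + \dx(H-1)$, not $\dx(2H-1)+\du H$ (you conflated the range of $\obsOp(M)$ with its domain $w_{t+1-2H:t-1}$), but since both quantities are at most $2(\dx+\du)H$ your conclusion is unaffected.
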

We are now ready to prove \cref{thm:lqrRegretFull}.
\begin{proof}[of \cref{thm:lqrRegretFull}]
The first $2H$ rounds of each sub-epoch are the time it takes the system to reach steady-state after a prediction change. During these short mixing periods it will suffice to bound the regret by a constant. Indeed, by \cref{lemma:technicalParameters} we have that for
$
\RxuMax
=
\kappa \Bbound \maxNoise \RM \sqrt{5H} / \gamma
$
\begin{align*}
    \max_{M \in \mathcal{M}, \norm{w} \le \maxNoise, t \le T} 
    \max\brk[c]{
    \norm{(x_t, u_t)}
    ,
    \norm{(x_t^{\pi_M}, u_t^{\pi_M})}
    }
    \le
    \RxuMax
    .
\end{align*}
Combining with the Lipschitz assumption, we get that
$
    \abs*{
    c_t(x_t, u_t)
    -
    c_t(\xtpi, \utpi)
    }
    \le
    2 \RxuMax
    .
$
Using \cref{lemma:epochLengths} we can thus bound the regret as
\begin{align*}
    \mathrm{Regret}_T(\pi)
    \le
    16 \RxuMax (\dx + \du) H^2 \log^2 T
    +
    \sum_{i=1}^{\nEpochs}
    \sum_{j=3}^{\nSubEpochs}
    \sum_{t=\tauIJ+2H}^{\tauIJ[j+1]-1}
    c_t(\xt,\ut) - c_t(\xtpi, \utpi)
    ,
\end{align*}
where we can start with $j \ge 3$ since \cref{alg:lqr} ensures that the first two sub epoch are $2H$ long.
Now, recall the definitions of the expected surrogate and optimistic cost functions, which rely on the bounded memory representations in \cref{eq:bounded-mem-rep}.
\begin{align*}
    f_t(M; \model, \ww, \zt[])
    &=
    c_t(x_t(M; \model, \ww), u_t(M; \ww), \zt[])
    \\
    F(M; \model)
    &
    =
    \EE[{\zt[]}, \ww] f_t(M; \model, \ww, \zt[])
    \\
    F(M)
    &
    =
    F(M; \model_{\star})
    \\
    \bar{F}_t(M)
    &
    =
    F(M; \model_{\tauIt})
    -
    \pOptimism \maxNoise \norm{V_{\tauIt}^{-1/2} \obsOp(M)}_\infty
    ,
\end{align*}
where $i(t) = \max\brk[c]{i \;:\; \tauI \le t}$ is the index of the epoch to which $t$ belongs, and $\norm{\cdot}_\infty$ is the entry-wise matrix infinity norm.
Letting $M_\star \in \mathcal{M}$ be the DAP approximation of $\pi \in \Pi_\text{lin}$, we have the following decomposition of the remaining regret term:
\begin{align*}
    \sum_{i=1}^{\nEpochs}
    \sum_{j=3}^{\nSubEpochs}
    \sum_{t=\tauIJ+2H}^{\tauIJ[j+1]-1}
    &
    c_t(\xt,\ut) - c_t(\xtpi,\utpi)
    \\
    \tag{$\ocoR{1}$ - Truncation + Concentration}
    &
    =
    \sum_{i=1}^{\nEpochs}
    \sum_{j=3}^{\nSubEpochs}
    \sum_{t=\tauIJ+2H}^{\tauIJ[j+1]-1}
    c_t(\xt,\ut) - F(M_{\tauIJ})
    \\
    \tag{$\ocoR{2}$ - Optimism}
    &
    +
    \sum_{i=1}^{\nEpochs}
    \sum_{j=3}^{\nSubEpochs}
    \sum_{t=\tauIJ+2H}^{\tauIJ[j+1]-1}
    F(M_{\tauIJ}) - \bar{F}_t(M_{\tauIJ})
    \\
    \tag{$\ocoR{3}$ - Excess Risk}
    &
    +
    \sum_{i=1}^{\nEpochs}
    \sum_{j=3}^{\nSubEpochs}
    \sum_{t=\tauIJ+2H}^{\tauIJ[j+1]-1}
    \bar{F}_t(M_{\tauIJ}) - \bar{F}_t(M_\star)
    \\
    \tag{$\ocoR{4}$ - Optimism}
    &
    +
    \sum_{i=1}^{\nEpochs}
    \sum_{j=3}^{\nSubEpochs}
    \sum_{t=\tauIJ+2H}^{\tauIJ[j+1]-1}
    \bar{F}_t(M_\star) - F(M_\star)
    \\
    \tag{$\ocoR{5}$ - Truncation + Concentration}
    &
    +
    \sum_{i=1}^{\nEpochs}
    \sum_{j=3}^{\nSubEpochs}
    \sum_{t=\tauIJ+2H}^{\tauIJ[j+1]-1}
    F(M_\star) - c_t(\xtpi,\utpi)
    .
\end{align*}
The proof of \cref{thm:lqrRegret} is concluded by taking a union bound over the following lemmas, which bound each of the terms (see proofs in \cref{sec:truncationCostProof,sec:optimismCostProof,sec:excessRiskCostProof}). The technical derivation of the final regret bound is purely algebraic and may be found in \cref{lemma:regret-final-bound}.
\end{proof}

\begin{lemma}[Truncation cost]
\label{lemma:lqrR15}
    With probability at least $1 - \delta / 3$ we have that
    \begin{align*}
    \ocoR{1} + \ocoR{5}
    \le
    (2\costVar
    +
    7 \kappa^2 \gamma^{-2} \Bbound^2 \RM^2 \maxNoise \sqrt{H})
    \sqrt{32 T H^3 (\dx + \du) (\dx^2 \kappa^2 + \du^2 \Bbound^2) \log^3 \frac{18 T^4}{\delta}}
    .
\end{align*}
\end{lemma}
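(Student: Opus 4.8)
The plan is to bound $\ocoR{1}$ and $\ocoR{5}$ symmetrically, each in two stages. Recall $\ocoR{1} = \sum_{i,j,t} \brk{c_t(\xt,\ut) - F(M_{\tauIJ})}$ and $\ocoR{5} = \sum_{i,j,t} \brk{F(M_\star) - c_t(\xtpi,\utpi)}$, summed over $t$ ranging from $\tauIJ + 2H$ to $\tauIJ[j+1]-1$ within sub-epochs $j \ge 3$. The first stage is a \emph{truncation} step: relate the true per-step cost to the surrogate cost evaluated at the fixed sub-epoch policy. Concretely, for $\ocoR{1}$ I would show that $|c_t(\xt,\ut) - f_t(M_{\tauIJ})| = |c_t(\xt,\ut) - c_t(x_t(M_{\tauIJ};\model_\star,\ww), u_t(M_{\tauIJ};\ww))|$ is small: since $M_t = M_{\tauIJ}$ is constant throughout the sub-epoch and $t \ge \tauIJ + 2H$, the algorithm has ``mixed'' and the true action $\ut = \sum_h M_{\tauIJ}^{[h]}\hat w_{t-h}$ differs from $u_t(M_{\tauIJ};\ww)$ only through (i) the truncation error $\Astar^H x_{t-H}$ in the unrolling \eqref{eq:lqrUnrolling}, which is geometrically small because $\Astar$ is strongly stable and $H = \gamma^{-1}\log T$, and (ii) the discrepancy $w_s - \hat w_s$ between true and estimated disturbances. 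Using the Lipschitz assumption on $c_t$, the DAP Lipschitz bounds (\cref{lemma:technicalParameters}, items 5–6), and the strong-stability decay, I get $|c_t(\xt,\ut) - f_t(M_{\tauIJ})| \lesssim \kappa\Bbound\RM\gamma^{-1}\sqrt H \cdot (\text{decay} + \sum_{h=1}^{H}\|w_{t-h}-\hat w_{t-h}\|)$; summing over $t$ and applying Cauchy–Schwarz together with \cref{lemma:disturbanceEstimation} ($\sum_t \|w_t-\hat w_t\|^2 \le \wErr^2$) makes this contribution $\widetilde O(\sqrt T)$. The truncation-decay part contributes only a $\mathrm{poly}(\ldots)\log T$ additive term since each sub-epoch loses a geometric tail. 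The term $\ocoR{5}$ is handled identically, comparing $c_t(\xtpi,\utpi)$ to $f_t(M_\star)$ using the same unrolling and strong stability of $\Astar + \Bstar K$ (via the reduction mentioned in the setup), except here there are no estimated-noise errors since the benchmark policy uses the true $w_t$ — so it is in fact cleaner.

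The second stage is a \emph{concentration} step: relate $\sum_t f_t(M_{\tauIJ})$ to $\sum_t F(M_{\tauIJ})$, and likewise $\sum_t f_t(M_\star)$ to $\sum_t F(M_\star)$. Here $F(M;\model_\star) = \EE[\zt[],\ww] f_t(M;\model_\star,\ww,\zt[])$ is the expectation over both the loss randomness $\zt[]$ and the disturbance sequence $\ww$. The subtlety is that $M_{\tauIJ}$ is itself random and depends on past data; but by the low-switching design, $M_{\tauIJ}$ is determined by observations from strictly earlier sub-epochs, and within the window $t \in [\tauIJ + 2H, \tauIJ[j+1]-1]$ the relevant disturbances $w_{t-2H:t-1}$ are independent of this choice (and of the loss draws $\zt$ there). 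So conditionally on the past, $f_t(M_{\tauIJ})$ is a function of fresh i.i.d.\ randomness with mean $F(M_{\tauIJ})$ and bounded range $\maxF^\star$ (from \cref{lemma:fbarProperties}, item 1). I would therefore apply a martingale/Azuma–Hoeffding concentration — or, to get the sharper $\costVar$-dependence in the stated bound, first separate the loss-randomness part (range $2\costVar$) from the disturbance part (range $O(\kappa\gamma^{-1}\Bbound\RM\maxNoise\sqrt H)$), bounding each by its own Azuma inequality over blocks. Since there are at most $\nEpochs\,\nSubEpochs \le 4(\dx+\du)H\log^2 T$ blocks (\cref{lemma:epochLengths}), a union bound over blocks costs only $\log$ factors, and the total deviation is $\lesssim (\costVar + \kappa^2\gamma^{-2}\Bbound^2\RM^2\maxNoise\sqrt H)\sqrt{T H^3 (\dx+\du)(\dx^2\kappa^2 + \du^2\Bbound^2)\log^3(T/\delta)}$, matching the claimed form. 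Combining both stages and both terms, with a union bound giving overall probability $1-\delta/3$, yields the lemma.

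The main obstacle I anticipate is the second stage — establishing the concentration rigorously despite the adaptive, data-dependent choice of $M_{\tauIJ}$ and the system's memory. One must argue carefully that the ``history'' $\sigma$-algebra at the start of sub-epoch $j$ (which fixes $M_{\tauIJ}$, $\model_{\tauIt}$, $V_{\tauIt}$) is independent of the fresh disturbances and loss draws used on $[\tauIJ+2H, \tauIJ[j+1]-1]$; the $2H$ offset is exactly what decouples the memory, but making this precise requires tracking which $w_s$ feed into $\obs_{t-1}$, $x_t(M;\model_\star,\ww)$ and $u_t(M;\ww)$, and verifying (as the excerpt notes after \eqref{eq:bounded-mem-rep}) that only $w_{t-2H:t-1}$ are involved. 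A secondary technical nuisance is that the benchmark comparator $M_\star$ is fixed but we need the bound to hold simultaneously for all $\pi \in \Pi_{\mathrm{DAP}}$, which for $\ocoR{5}$ forces a uniform-convergence argument over the set $\mathcal{M}$ (covering-number / $\RM$-diameter based) rather than a single Azuma bound — this is where the extra $\sqrt{(\dx+\du)H}$-type dimension factors in the stated constant originate, and it parallels \cref{lemma:ocoUniformConvergence} from the simplified analysis.
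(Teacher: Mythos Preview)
Your two-stage decomposition (truncation, then concentration) and your treatment of the truncation step match the paper essentially line for line: the paper bounds $|c_t(\xt,\ut)-f_t(M_{\tauIJ})|$ via the Lipschitz assumption, the strong-stability decay of $\Astar^H x_{t-H}$, the DAP Lipschitz bounds in \cref{lemma:technicalParameters}, and then Cauchy--Schwarz plus \cref{lemma:disturbanceEstimation} to control $\sum_t\|w_t-\hat w_t\|$; and $\ocoR5$ is indeed the clean case with no $\hat w$ error.

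Where you diverge is the concentration step for $\ocoR1$. You propose a conditional Azuma/martingale argument, using that $M_{\tauIJ}$ is fixed by earlier sub-epochs and the $2H$ offset decouples the memory, and you flag the adaptivity of $M_{\tauIJ}$ as the main obstacle. The paper sidesteps this entirely: it establishes a \emph{single} block uniform-convergence bound (\cref{lemma:blockUC}) that holds simultaneously for all $M\in\mathcal M$ and all intervals $[t_1,t_2]$, with range constant $\maxF^\star=2\costVar+5\kappa\gamma^{-1}\Bbound\RM\maxNoise\sqrt H$ from \cref{lemma:fbarProperties}(1), and then simply plugs in $M=M_{\tauIJ}$ and $M=M_\star$ on each sub-epoch interval. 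Because the bound is uniform, it is irrelevant that $M_{\tauIJ}$ is data-dependent or that the sub-epoch end time $\tauIJ[j+1]$ is a random stopping time (driven by determinant doubling) --- issues your martingale route would have to handle explicitly. In short, the uniform-convergence tool you already anticipate needing for $\ocoR5$ is exactly what the paper uses for $\ocoR1$ as well; your approach would work but is strictly more laborious, and the ``main obstacle'' you identify evaporates once you reuse \cref{lemma:blockUC} for both terms.
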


\begin{lemma}[Optimism cost]
\label{lemma:lqrR24}
    With probability at least $1 - \delta / 3$ we have that $\ocoR{4} \le 0$ and
    \begin{align*}
    \ocoR{2}
    \le
    20 \pOptimism \kappa \gamma^{-1} \wMin^{-1} \maxNoise \RM \Bbound H
    \sqrt{
    T (\dx + \du) (\dx^2 \kappa^2 + \du \Bbound^2) \log^3 \frac{12T}{\delta}
    }
    .
\end{align*}
\end{lemma}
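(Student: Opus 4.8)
The plan is to run the OFU argument of \cref{thm:scoRegret} in the control setting, with the unrolled model $\model_\star$ playing the role of $\ocoTrueModel$ and the estimated-disturbance feature vectors $\obs_{t-1}(M;\wwhat)$ playing the role of $\at$. The first step is to condition on the least-squares confidence event of \cref{lemma:lqrParameterEst} --- which already folds in the disturbance-estimation error $\sum_s\norm{w_s-\hat w_s}^2\le\wErr^2$ of \cref{lemma:disturbanceEstimation} --- so that $H\sqrt{\dx(\dx+\du)}\,\norm{\Delta_{\tauIt}}_{V_{\tauIt}}\le\pOptimism$ (up to an absolute constant). On this event I would prove the control analogue of \cref{lemma:ocoOptimismBound}: using the Lipschitz assumption on $c$, the factorization $x_t(M;\model,\ww)=\model\obs_{t-1}(M;\ww)+w_{t-1}$, the bound $\norm{(\model_\star-\model_{\tauIt})\obs}\le\norm{\Delta_{\tauIt}}_{V_{\tauIt}}\norm{V_{\tauIt}^{-1/2}\obs}$ together with Jensen, and $\norm{X}_\infty\le\norm{X}_F$, one obtains for every admissible $M$
\[
  0\;\le\;F(M)-\bar F_t(M)\;\le\;2\pOptimism\maxNoise\norm{V_{\tauIt}^{-1/2}\obsOp(M)}_F .
\]
Applying the left inequality at $M=M_\star\in\mathcal M$ gives $\bar F_t(M_\star)-F(M_\star)\le 0$ for every $t$, hence $\ocoR{4}\le 0$; applying the right inequality at $M=M_t$ and summing reduces the task to bounding $\sum_t\norm{V_{\tauIt}^{-1/2}\obsOp(M_t)}_F$.

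The second step turns this Frobenius confidence width into a standard elliptical-potential sum, following the overview in \cref{sec:lqrRegretDecomposition}. Writing $\Sigma=\EE[\ww]w_{t-2H:t-2}w_{t-2H:t-2}\tran\succeq\wMin^2 I$ and using $\obsOp(M)\Sigma\obsOp(M)\tran=\EE[\ww]\obs_{t-1}(M;\ww)\obs_{t-1}(M;\ww)\tran$ from \cref{eq:bounded-mem-rep} gives $\wMin^2\norm{V_{\tauIt}^{-1/2}\obsOp(M)}_F^2\le\EE[\ww]\norm{V_{\tauIt}^{-1/2}\obs_{t-1}(M;\ww)}^2$, so by Jensen
\[
  \sum_t\norm{V_{\tauIt}^{-1/2}\obsOp(M_t)}_F
  \;\le\;
  \wMin^{-1}\sqrt{\,T\sum_t\EE[\ww]\norm{V_{\tauIt}^{-1/2}\obs_{t-1}(M_t;\ww)}^2\,}.
\]
It then remains to show that $\sum_t\EE[\ww]\norm{V_{\tauIt}^{-1/2}\obs_{t-1}(M_t;\ww)}^2$ is $O((\dx+\du)H\log(T/\delta))$ with high probability, via three moves: (i) a concentration step replacing the fresh-noise expectation by the realized features, where the $2H$-window dependence of $\obs_{t-1}$ is handled by splitting the sum into $2H$ interleaved conditionally-i.i.d.\ subsequences (or a Freedman-type martingale bound), and the $\ww$-versus-$\wwhat$ discrepancy is absorbed using \cref{lemma:technicalParameters}(6), $V_{\tauIt}^{-1}\preceq\pRegTheta^{-1}I$, Cauchy--Schwarz and $\sum_s\norm{w_s-\hat w_s}^2\le\wErr^2$ (which, with $\pRegTheta=2\maxNoise^2\RM^2H^2$, contributes only a lower-order term); (ii) the observation that past the first $2H$ rounds of each sub-epoch $\obs_{t-1}(M_t;\wwhat)$ is exactly the feature $\obs_{t-1}$ entering $V_t$, combined with the determinant-doubling rule $V_t\preceq 2V_{\tauIt}$, hence $V_{\tauIt}^{-1}\preceq 2V_t^{-1}$; (iii) the harmonic-sum bound of \cref{lemma:harmonicBound} (re-indexed, using $\norm{\obs_t}^2\le\pRegTheta$), which gives $\sum_t\obs_{t-1}\tran V_t^{-1}\obs_{t-1}=O((\dx+\du)H\log T)$. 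Substituting the prescribed value of $\pOptimism$ and collecting constants then yields the claimed bound on $\ocoR{2}$.

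The main obstacle is the concentration in move (i): $V_{\tauIt}$, $\model_{\tauIt}$ and the policy $M_t$ are all built from the estimated disturbances $\wwhat$, which in turn depend on the realized states and actions, so $\obs_{t-1}$ is not a clean function of independent noise and one faces precisely the cyclic dependencies flagged before \cref{alg:lqr}. The resolution --- and the reason for the epoch/sub-epoch schedule --- is \cref{lemma:epochLengths}: there are only $O((\dx+\du)H\log^2 T)$ sub-epochs, so $V_{\tauIt}$, $\model_{\tauIt}$ and $M_t$ are piecewise constant and frozen strictly before each sub-epoch starts; conditioned on that $\sigma$-field the noise driving the sub-epoch is fresh, which legitimizes both the covariance identity and the concentration. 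A secondary, purely mechanical burden is carrying the three norm objects ($\infty$ versus Frobenius versus Euclidean) on matrices of size $\dx(2H-1)\times(\dx(H-1)+\du H)$ so that every dimension, $\kappa$ and $\gamma$ factor lines up with the claimed polynomial; I expect this bookkeeping, rather than any conceptual difficulty, to dominate the length of the actual proof.
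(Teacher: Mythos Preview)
Your plan is essentially the paper's proof: the good event combines \cref{lemma:disturbanceEstimation}, \cref{lemma:lqrParameterEst}, and a block-concentration bound (the paper's \cref{lemma:blockConcentration}) for $\norm{V_{\tauI}^{-1/2}\obs_{t-1}(M_{\tauIJ};\ww)}^2$; then the optimism inequality (your second display, the paper's \cref{lemma:lqrOptimism}) handles $\ocoR{4}$ and reduces $\ocoR{2}$ exactly as you describe, with the same three moves (concentration, $\ww\to\wwhat$ via \cref{lemma:technicalParameters}(6), harmonic sum).

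One technical slip: the determinant-doubling condition $\det(V_t)\le 2\det(V_{\tauIt})$ does \emph{not} give the Loewner bound $V_t\preceq 2V_{\tauIt}$ (and hence not $V_{\tauIt}^{-1}\preceq 2V_t^{-1}$) that you invoke in move~(ii). What holds instead is the standard elliptical-potential inequality (the paper's \cref{lemma:vTau-to-vt}): since $V_{t-1}\succeq V_{\tauIt}$ and $\det(V_{t-1})/\det(V_{\tauIt})\le 2$, one has $\obs\tran V_{\tauIt}^{-1}\obs\le 2\,\obs\tran V_{t-1}^{-1}\obs$ for every vector $\obs$. This is exactly the conclusion you need, so the fix is local and the rest of your outline goes through unchanged.
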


\begin{lemma}[Excess risk]
\label{lemma:lqrR3}
    With probability at least $1 - \delta / 3$ we have that
    \begin{align*}
    \ocoR{3}
    \le
    104 \pOptimism \kappa \gamma^{-1} \RM \Bbound \sqrt{T H \du (\dx^2 \kappa^2 + \du \Bbound^2) \log^3 \frac{18 T^5}{\delta}}
    +
    16 \costVar \sqrt{
    T
    H^3 \dx \du \log^3 \frac{18 T^5}{\delta}
    }
    .
\end{align*}
\end{lemma}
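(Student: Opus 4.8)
The plan is to handle $\ocoR{3}$ in direct analogy with the excess-risk term in the proof of \cref{thm:scoRegret}, while absorbing three new complications: the epoch/subepoch schedule, the fact that the surrogate costs depend on a window of $2H$ disturbances, and the fact that the played DAP policies use the estimated disturbances $\wwhat$ in place of the true $\ww$. Fix an epoch $i$ and a subepoch $j\ge 3$; throughout this subepoch the model estimate $\model_{\tauI}$ and the matrix $V_{\tauI}$ are frozen and determined by time $\tauI-1$ (recall $\tauIJ[j-1]\ge\tauI+2H$), by \cref{ln:opt-cost-min} the policy $M_{\tauIJ}$ minimizes $\sum_{s=\tauIJ[j-1]}^{\tauIJ-1}\hat f_s(M)$ over $M\in\mathcal{M}$ where $\hat f_s$ is the optimistic surrogate evaluated at the estimated noise (\cref{eq:ft-short-defs}), and $\bar F_t(M)$ takes the same value $F(M;\model_{\tauI})-\pOptimism\maxNoise\norm{V_{\tauI}^{-1/2}\obsOp(M)}_\infty$ for every $t$ in epoch $i$. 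Writing $L$ for the length of subepoch $j-1$ and ${\textstyle\sum_s}$ for $\sum_{s=\tauIJ[j-1]}^{\tauIJ-1}$, for $t$ in subepoch $j$ (where $M_t=M_{\tauIJ}$) I would decompose
\begin{align*}
    \bar{F}_t(M_{\tauIJ}) - \bar{F}_t(M_\star)
    &=
    \Bigl( \bar{F}_t(M_{\tauIJ}) - \tfrac1L {\textstyle\sum_s} \hat{f}_s(M_{\tauIJ}) \Bigr)
    +
    \tfrac1L {\textstyle\sum_s} \bigl( \hat{f}_s(M_{\tauIJ}) - \hat{f}_s(M_\star) \bigr)
    \\
    &\quad{}+
    \Bigl( \tfrac1L {\textstyle\sum_s} \hat{f}_s(M_\star) - \bar{F}_t(M_\star) \Bigr) ,
\end{align*}
where $M_\star\in\mathcal{M}$ since $\pi\in\Pi_{\mathrm{DAP}}$, so the middle term is $\le 0$ by optimality of $M_{\tauIJ}$. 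Summing over the post-mixing part of subepoch $j$ (of length at most $2L$ for $j\ge 3$) then gives $\ocoR{3}\le 4\sum_{i=1}^{\nEpochs}\sum_{j=3}^{\nSubEpochs}L\,\Lambda_{i,j}$ with $\Lambda_{i,j}:=\sup_{M\in\mathcal{M}}\abs*{\tfrac1L\sum_s\hat f_s(M)-\bar F_t(M)}$ (well-defined since $\bar F_t$ is constant over epoch $i$).

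Next I would split $\Lambda_{i,j}$ into a disturbance-estimation part and a statistical part. For the former, part (3) of \cref{lemma:fbarProperties} gives, uniformly in $M$, $\abs{\hat f_s(M)-\bar f_s(M)}\le\pLipW(\model_{\tauI})\norm{w_{s-2H:s-1}-\hat{w}_{s-2H:s-1}}_F$ with $\bar f_s(M)=\bar f_s(M;\model_{\tauI},V_{\tauI},\ww,\zt[s])$; summing over $s$, applying Cauchy--Schwarz, and combining \cref{lemma:disturbanceEstimation} with the bound $\pLipW(\model_{\tauI})\le\pOptimism/(\maxNoise\sqrt{\dx(\dx+\du)H^3})$ from \cref{lemma:fbarProperties} (valid under the parameter estimate of \cref{lemma:lqrParameterEst}) controls this part, summed over all subepochs, by a quantity of the claimed $\pOptimism\sqrt{T}$ order. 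For the statistical part I need $\tfrac1L\sum_s\bar f_s(M;\model_{\tauI},V_{\tauI},\ww,\zt[s])$ to concentrate around its expectation $\bar F_t(M)$ uniformly over $M\in\mathcal{M}$; here it is crucial that the disturbance windows $w_{s-2H:s-1}$ arising in subepoch $j-1$ are independent of the frozen pair $(\model_{\tauI},V_{\tauI})$.

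The statistical bound is the analogue of \cref{lemma:ocoUniformConvergence}, now over both the loss randomness $\zt[s]$ and the disturbances $w_t$. Since $\bar f_s(M;\model_{\tauI},V_{\tauI},\ww,\zt[s])$ depends on $\ww$ only through $w_{s-2H:s-1}$, the summands are $2H$-dependent, so I would partition subepoch $j-1$ into $2H$ interleaved subsequences of i.i.d.\ terms and apply, on each, a Hoeffding bound together with an $\epsilon$-net over the Frobenius ball $\mathcal{M}$ (of dimension $\du H\dx$), using the uniform range bound $\maxF(\model_{\tauI})\le 2\costVar+3\pOptimism/(H\sqrt{\dx(\dx+\du)})$ of part (2) of \cref{lemma:fbarProperties} and the Lipschitzness of $\bar f_s$ in $M$ (immediate from the linearity of $x_s,u_s$ and $\obsOp$ in $M$). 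This yields $\Lambda_{i,j}\lesssim\brk*{\costVar+\pOptimism/H}\sqrt{\du\dx H^2\log(T/\delta)/L}$ plus the disturbance term, hence $L\,\Lambda_{i,j}\lesssim\brk*{\costVar+\pOptimism/H}\sqrt{\du\dx H^2\log(T/\delta)}\,\sqrt{L}$, and summing over the $\nSubEpochs\le 2\log T$ subepochs of each epoch and the $\nEpochs\le 2(\dx+\du)H\log T$ epochs (\cref{lemma:epochLengths}) by Cauchy--Schwarz turns $\sum_{i,j}\sqrt{L}$ into $\sqrt{\nEpochs\nSubEpochs T}$ — exactly as $\sum_{t\ge2}(t-1)^{-1/2}\le 2\sqrt T$ is used in \cref{thm:scoRegret}. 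A union bound over all subepochs (splitting $\delta/3$ among them) produces the $\log^3(18T^5/\delta)$ factors, and collecting the $\costVar$- and $\pOptimism$-dependent contributions separately gives the claimed bound.

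The main obstacle is this uniform-convergence step. One must at once handle the $2H$-dependence of the surrogate costs without losing more than a factor polynomial in $H$, carry out the covering-number argument over $\mathcal{M}$ while keeping only the stated polynomial dependence on the problem parameters — which forces conditioning on the frozen $(\model_{\tauI},V_{\tauI})$ and the use of the \emph{sharp} range bound $\maxF(\model_{\tauI})\lesssim\costVar+\pOptimism/H$ rather than a crude one, since the optimism bonus itself has range of order $\pOptimism/H$ — and ensure the disturbance-estimation error enters only through $\sqrt{\sum_s\norm{w_s-\hat{w}_s}^2}\le\wErr$ (via \cref{lemma:disturbanceEstimation}) rather than accumulating linearly in $T$. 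The remaining bookkeeping — the geometric telescoping over subepochs and the union bounds over the poly-logarithmically many epochs and subepochs — is routine given \cref{lemma:epochLengths,lemma:fbarProperties,lemma:lqrParameterEst}.
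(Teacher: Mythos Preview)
Your proposal is correct and follows essentially the same approach as the paper's proof in \cref{sec:excessRiskCostProof}: you use the optimality of $M_{\tauIJ}$ for $\sum_s\hat f_s$, pass between $\hat f_s$ and $\bar f_s$ via the $\pLipW(\model_{\tauI})$-Lipschitzness in the disturbances (part (3) of \cref{lemma:fbarProperties}) together with \cref{lemma:disturbanceEstimation}, pass between $\bar f_s$ and $\bar F_t$ via a block uniform-convergence lemma (the paper's \cref{lemma:blockUC}) applied conditionally on the frozen $(\model_{\tauI},V_{\tauI})$ with range $\maxF(\model_{\tauI})$, and then telescope over subepochs using the doubling schedule and \cref{lemma:epochLengths}. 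The only cosmetic difference is that you package the two ``relate $\bar F_t$ to $\hat f_s$'' steps into a single $\Lambda_{i,j}=\sup_{M\in\mathcal M}\abs{\cdot}$ and then split it, whereas the paper bounds $\sum_s(\bar f_s(M_{\tauIJ})-\bar f_s(M_\star))$ first and only afterwards invokes uniform convergence; the content is identical.
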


\subsection{Proof of \cref{lemma:lqrR15}}
\label{sec:truncationCostProof}
There are several contributing sub-terms to $\ocoR{1}$ and $\ocoR{5}$. There are the truncation error due the bounded memory representation, and the error due to the disturbance estimation, both of which are standard in recent literature. Additionally, there is a concentration of measure argument.
To that end, we need the following uniform convergence result for sums of random functions that are independent at $2H$ increments (see proof in \cref{sec:technicalProofs}).
\begin{lemma}[Block uniform convergence]
\label{lemma:blockUC}
Let $f_t : \RR[d] \to \RR$ be a sequence of identically distributed functions such that $f_t$ and $f_{t+2H}$ are interdependently distributed. Let $F(M) = \EE f_t(M)$, $R > 0$ and suppose that $T \ge {64 R^2}$ where $C > 1$ is such that
\begin{align*}
    \abs{f_t(M) - f_{t+2H}(M)}
    \le
    C
    \;\;
    ,
    \forall t \ge 1, \norm{M} \le R
    .
\end{align*}
The with probability at least $1-\delta$
\begin{align*}
    \abs*{\sum_{t=1}^{T} (f_t(M) - F(M))}
    \le
    C \sqrt{2 T H d \log \frac{3 T^2}{\delta}}
    .
\end{align*}
\end{lemma}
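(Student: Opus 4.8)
The plan is to exploit the $2H$-block independence structure. Partition $\{1,\dots,T\}$ into the $2H$ residue classes $B_r=\{t\le T: t\equiv r \pmod{2H}\}$, $r\in\{1,\dots,2H\}$, so that within each class the variables $\{f_t : t\in B_r\}$ are i.i.d.\ (in the application this holds because $f_t$ is a function of the noise window $w_{t-2H:t-1}$ and the fresh cost randomness $\zeta_t$, and consecutive indices of a class are $2H$ apart, making these windows disjoint). Write $\sum_{t=1}^T(f_t(M)-F(M))=\sum_{r=1}^{2H}\sum_{t\in B_r}(f_t(M)-F(M))$; it then suffices to control each inner sum and recombine, using $\sum_{r=1}^{2H}\sqrt{|B_r|}\le\sqrt{2H\sum_r|B_r|}=\sqrt{2HT}$ by Cauchy--Schwarz to produce the $\sqrt{2HT}$ factor.

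Next I would fix $M$ with $\|M\|\le R$ and obtain a pointwise bound. The crucial observation is that the hypothesis $|f_t(M)-f_{t+2H}(M)|\le C$, combined with $f_t(M),f_{t+2H}(M)$ being i.i.d., forces the \emph{range} of $f_t(M)$ to be at most $C$: with $a=\essinf f_t(M)$ and $b=\esssup f_t(M)$, independence gives $\Pr(f_t(M)>b-\eta,\ f_{t+2H}(M)<a+\eta)>0$ for every $\eta>0$, so $b-a-2\eta\le C$, and letting $\eta\to0$ yields $b-a\le C$; in particular $|f_t(M)-F(M)|\le C$ almost surely. Hoeffding's inequality applied to each i.i.d.\ block then gives $\Pr\bigl(|\sum_{t\in B_r}(f_t(M)-F(M))|>s\bigr)\le 2\exp(-2s^2/(|B_r|C^2))$; choosing $s=s_r\propto C\sqrt{|B_r|\log(1/\delta')}$, a union bound over the $2H$ blocks together with the Cauchy--Schwarz step yields a deviation of order $C\sqrt{HT\log(1/\delta')}$ for this fixed $M$ with failure probability $O(H\delta')$.

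To make this uniform over the ball $\{\|M\|\le R\}$, I would take an $\varepsilon$-net $\mathcal N$ of the ball (with $|\mathcal N|\le(1+2R/\varepsilon)^d$), apply the pointwise bound at each net point with $\delta'$ rescaled by $1/(2H|\mathcal N|)$, and extend to arbitrary $M$ using continuity of $M\mapsto f_t(M)$ and $M\mapsto F(M)$ — which in the application is Lipschitz, since $u_t(\cdot;\ww)$ and $x_t(\cdot;\model,\ww)$ are linear in $M$ and $c_t$ is $1$-Lipschitz. Taking $\varepsilon$ polynomially small in $T$ (e.g.\ $\varepsilon\approx R/T^2$) makes the discretization error negligible under the assumption $T\ge 64R^2$, while $\log|\mathcal N|\lesssim d\log T$ is absorbed into the $\log(3T^2/\delta)$ factor; this is precisely why the Lipschitz constant does not appear in the final bound — it enters only inside a logarithm.

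The main obstacle is this last step. The lemma is stated purely through the block-increment bound $C$, with no explicit regularity of $M\mapsto f_t(M)$, so one must carefully argue that whatever modulus of continuity is actually available (a linear-in-$M$ composition, in the application) contributes only at the logarithmic level and is swallowed by the $\log(3T^2/\delta)$ term — and, in tandem, verify that the residue-class variables are genuinely \emph{mutually} independent, not merely pairwise independent at lag $2H$, so that Hoeffding applies within each block. Everything else — the essential-supremum range argument, Hoeffding, and the bookkeeping that turns $2H$ blocks of size $\approx T/2H$ into the $\sqrt{2HT}$ factor — is routine.
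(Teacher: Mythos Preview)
Your proposal is correct and follows essentially the same approach as the paper: decompose $\{1,\dots,T\}$ into $2H$ residue classes of i.i.d.\ variables, apply a base uniform convergence result ($\varepsilon$-net plus Hoeffding) within each block, union bound over blocks, and recombine via Cauchy--Schwarz to obtain the $\sqrt{2HT}$ factor. Your range-via-independence argument and the remark that the Lipschitz constant in $M$ enters only logarithmically are both points the paper glosses over by invoking its base uniform convergence lemma (\cref{lemma:ocoUniformConvergence}) as a black box, but the skeleton is identical.
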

We start by defining a so-called good event that will be assumed to hold throughout. Suppose that
\begin{align*}
    \sqrt{\sum_{t=1}^{T} \norm{w_t - \hat{w}_t}^2}
    \le
    \wErr 
    = 
    10 \maxNoise \kappa \RM \Bbound \gamma^{-1} \sqrt{H (\dx + \du) (\dx^2 \kappa^2 + \du \Bbound^2) \log \frac{6T}{\delta}}
    ,
\end{align*}
and
\begin{align*}
    \abs*{
    \sum_{t=t_1}^{t_2-1}
    f_t(M) - F(M)
    }
    \le
    (2\costVar
    +
    5 \kappa \gamma^{-1} \Bbound \RM \maxNoise \sqrt{H})
    H
    \sqrt{2(t_2 - t_1) \dx \du \log \frac{18 T^4}{\delta}}
    ,
\end{align*}
for all $M \in \mathcal{M}$, and $1 \le t_1 < t_2 \le T+1$. Notice that by \cref{lemma:fbarProperties}, we have that \cref{lemma:blockUC} holds with 
$
C 
=
2\costVar
+
5 \kappa \gamma^{-1} \Bbound \RM \maxNoise \sqrt{H}
.
$
We thus take a union bound on \cref{lemma:disturbanceEstimation,lemma:blockUC} to get that the above hold with probability at least $1-\delta/3$.

We start with the simpler $\ocoR{5}$. By \cref{lemma:technicalParameters} we have that for
$
\RxuMax
=
\kappa \Bbound \maxNoise \RM \sqrt{5H} / \gamma
$
\begin{align*}
    \max_{M \in \mathcal{M}, \norm{w} \le \maxNoise, t \le T} 
    \max\brk[c]{
    \norm{(x_t, u_t)}
    ,
    \norm{(x_t^{\pi_M}, u_t^{\pi_M})}
    ,
    \norm{(x_t(M; \ww), u_t(M; \ww))}
    }
    \le
    \RxuMax
    .
\end{align*}
Notice that $\utpi = u_t(M_\star; \ww)$. We can thus use the Lipschitz assumption to get that for all $t \ge 1$
\begin{align*}
    f_t(M_\star) - c_t(\xtpi,\utpi)
    &
    =
    c_t(x_t(M_\star; \ww), u_t(M_\star; \ww))
    -
    c_t(\xtpi,\utpi)
    \\
    &
    \le
    \norm{{x}_t(M_\star; \ww) - \xtpi}
    \\
    \tag{\cref{eq:lqrUnrolling}}
    &
    =
    \norm{\Astar^H x_{t-H}^\pi}
    \\
    \tag{strong stability}
    &
    \le
    \RxuMax \kappa (1-\gamma)^H
    .
\end{align*}
Summing over all sub-epochs and using that $(1-\gamma)^H \le e^{-\gamma H}$ we conclude that
\begin{align*}
    \ocoR{5}
    &
    =
    \sum_{i=1}^{\nEpochs}
    \sum_{j=3}^{\nSubEpochs}
    \sum_{t=\tauIJ+2H}^{\tauIJ[j+1]-1}
    \brk[s]{
    (F(M_\star) - f_t(M_\star)) 
    +
    (f_t(M_\star) - c_t(\xtpi,\utpi)
    }
    \\
    &
    \le
    \kappa^2 \gamma^{-1}\Bbound \maxNoise \RM \sqrt{5H} e^{-\gamma H} T
    +
    (\costVar
    +
    5 \kappa \gamma^{-1} \Bbound \RM \maxNoise \sqrt{H})
    H
    \sqrt{2 T \sum_{i=1}^{\nEpochs} (\nSubEpochs) \dx \du \log \frac{18 T^4}{\delta}}
    \\
    &
    \le
    \kappa^2 \gamma^{-1}\Bbound \maxNoise \RM \sqrt{5H}
    +
    (\costVar
    +
    5 \kappa \gamma^{-1} \Bbound \RM \maxNoise \sqrt{H})
    \sqrt{8 T H^3 \dx \du (\dx + \du) \log^3 \frac{18 T^4}{\delta}}
    \\
    &
    \le
    (2\costVar
    +
    6 \kappa^2 \gamma^{-1} \Bbound \RM \maxNoise \sqrt{H})
    \sqrt{8 T H^3 \dx \du (\dx + \du) \log^3 \frac{18 T^4}{\delta}}
    .
\end{align*}
Moving to $\ocoR{1}$, notice that for $\tauIJ+2H \le t <\tauIJ[j+1]$ where $i \ge 1, j \ge 3$ we have that
$
u_t = u_t(M_{\tauIJ}; \wwhat)
.
$
We thus have that
\begin{align*}
    \norm{x_t - {x}_t(M_{\tauIJ}; \ww)}
    &
    =
    \norm*{
    \Astar^H x_{t-H}
    +
    \sum_{h=1}^{H} \Astar^{h-1}\Bstar \brk*{
    u_{t-h}(M_{\tauIJ}; \wwhat)
    - 
    u_{t-h}(M_{\tauIJ}; \ww)
    }
    }
    \\
    &
    \le
    \norm{\Astar^H} \norm{x_{t-H}}
    +
    \sum_{h=1}^{H} \norm{\Astar^{h-1}}\norm{\Bstar} \norm{
    u_{t-h}(M_{\tauIJ}; \wwhat)
    - 
    u_{t-h}(M_{\tauIJ}; \ww)
    }
    \\
    &
    \le
    \RxuMax \kappa e^{-\gamma H}
    +
    \kappa \Bbound \sum_{h=1}^{H}  (1-\gamma)^{h-1} \norm{
    u_{t-h}(M_{\tauIJ}; \wwhat)
    - 
    u_{t-h}(M_{\tauIJ}; \ww)
    }
    .
\end{align*}
Denoting $\brk[s]{x}_+ = \max\brk[c]{0, x}$ we thus conclude that
\begin{align*}
    &
    \sum_{i=1}^{\nEpochs}
    \sum_{j=3}^{\nSubEpochs}
    \sum_{t=\tauIJ+2H}^{\tauIJ[j+1]-1}
    c_t(\xt,\ut) - f_t(M_{\tauIJ})
    \\
    &
    =
    \sum_{i=1}^{\nEpochs}
    \sum_{j=3}^{\nSubEpochs}
    \sum_{t=\tauIJ+2H}^{\tauIJ[j+1]-1}
    c_t(\xt,\ut) - c_t(x_t(M_{\tauIJ}; \ww), u_t(M_{\tauIJ}; \ww))
    \\
    \tag{Lipschitz}
    &
    \le
    \sum_{i=1}^{\nEpochs}
    \sum_{j=3}^{\nSubEpochs}
    \sum_{t=\tauIJ+2H}^{\tauIJ[j+1]-1}
    (
    \norm{x_t - x_t(M_{\tauIJ}; \ww)}
    +
    \norm{u_t - u_t(M_{\tauIJ}; \ww)}
    )
    \\
    &
    \le
    \kappa \RxuMax e^{-\gamma H} T 
    +
    \kappa \Bbound 
    \sum_{i=1}^{\nEpochs}
    \sum_{j=3}^{\nSubEpochs}
    \sum_{t=\tauIJ+2H}^{\tauIJ[j+1]-1}
    \sum_{h=0}^{H}
    (1-\gamma)^{\brk[s]{h-1}_+} \norm{
    u_{t-h}(M_{\tauIJ}; \wwhat) - u_{t-h}(M_{\tauIJ}; \ww)}
    \\
    &
    \le
    \kappa \RxuMax
    +
    \kappa \Bbound \RM
    \sum_{i=1}^{\nEpochs}
    \sum_{j=3}^{\nSubEpochs}
    \sum_{t=\tauIJ+2H}^{\tauIJ[j+1]-1}
    \sum_{h=0}^{H}
    (1-\gamma)^{\brk[s]{h-1}_+} \norm{
    w_{t-h-H:t-h-1} - \hat{w}_{t-h-H:t-h-1}}
    \\
    \tag{Jensen}
    &
    \le
    \kappa \RxuMax
    +
    \kappa \Bbound \RM
    \sum_{h=0}^{H}
    (1-\gamma)^{\brk[s]{h-1}_+} 
    \sqrt{T \sum_{t=H}^{T} \norm{
    w_{t-h-H:t-h-1} - \hat{w}_{t-h-H:t-h-1}}^2}
    \\
    &
    \le
    \kappa \RxuMax
    +
    2\kappa \gamma^{-1}\Bbound \RM
    \wErr
    \sqrt{T H}
    \\
    &
    \le
    21 \kappa^2 \gamma^{-2}\Bbound^2 \RM^2 \maxNoise H \sqrt{T (\dx + \du) (\dx^2 \kappa^2 + \du \Bbound^2) \log \frac{6T}{\delta}}
    ,
\end{align*}
where the third inequality used that $u_t(M;\ww)$ is $\RM$-Lipschitz in $\ww$ (\cref{lemma:technicalParameters}), the fifth is since each summand appears at most $H$ times in the sum, and the fifth plugged in the vlaues of $\RxuMax$ and $\wErr$.
Next, repeating the steps in the bound of $\ocoR{5}$, we also have that
\begin{align*}
    \sum_{i=1}^{\nEpochs}
    \sum_{j=3}^{\nSubEpochs}
    \sum_{t=\tauIJ+2H}^{\tauIJ[j+1]-1}
    &
    f_t(M_{\tauIJ}) - F(M_{\tauIJ})
    \\
    &
    \le
    (2\costVar
    +
    5 \kappa \gamma^{-1} \Bbound \RM \maxNoise \sqrt{H})
    H
    \sqrt{2 T \sum_{i=1}^{\nEpochs} (\nSubEpochs) \dx \du \log \frac{18 T^4}{\delta}}
    \\
    &
    \le
    (2\costVar
    +
    5 \kappa \gamma^{-1} \Bbound \RM \maxNoise \sqrt{H})
    \sqrt{8 T H^3 \dx \du (\dx + \du) \log^3 \frac{18 T^4}{\delta}}
    .
\end{align*}
Putting both bounds together, we conclude that
\begin{align*}
    \ocoR{1}
    &
    =
    \sum_{i=1}^{\nEpochs}
    \sum_{j=3}^{\nSubEpochs}
    \sum_{t=\tauIJ+2H}^{\tauIJ[j+1]-1}
    \brk[s]{
    (c_t(\xt, \ut) - f_t (M_{\tauIJ}))
    +
    (f_t(M_{\tauIJ}) - F(M_{\tauIJ}))
    }
    \\
    &
    \le
    21 \kappa^2 \gamma^{-2}\Bbound^2 \RM^2 \maxNoise H \sqrt{T (\dx + \du) (\dx^2 \kappa^2 + \du \Bbound^2) \log \frac{6T}{\delta}}
    \\
    &
    +
    (2\costVar
    +
    5 \kappa \gamma^{-1} \Bbound \RM \maxNoise \sqrt{H})
    \sqrt{8 T H^3 \dx \du (\dx + \du) \log^3 \frac{18 T^4}{\delta}}
    \\
    &
    \le
    (2\costVar
    +
    7 \kappa^2 \gamma^{-2} \Bbound^2 \RM^2 \maxNoise \sqrt{H})
    \sqrt{8 T H^3 (\dx + \du) (\dx^2 \kappa^2 + \du^2 \Bbound^2) \log^3 \frac{18 T^4}{\delta}}
    .
    &&\blacksquare
\end{align*}

\subsection{Proof of \cref{lemma:lqrR24}}
\label{sec:optimismCostProof}
We first need the following lemma that deals with the concentration of sums of variables that are independent when they are $2H$ apart in time (proof in \cref{sec:technicalProofs}).

\begin{lemma}[Block Bernstein]
\label{lemma:blockConcentration}
Let $X_t$ be a sequence of random variables adapted to a filtration $\mathcal{F}_t$. If $0 \le X_t \le 1$ then with probability at least $1 - \delta$ simultaneously for all $1 \le t \le T$
\begin{align*}
    \sum_{s=1}^{t} \EE\brk[s]{X_t \mid \mathcal{F}_{t-2H}}
    \le
    2 \sum_{s=1}^{t} (X_s)
    +
    8 H \log \frac{2 T^2}{\delta}
    .
\end{align*}
\end{lemma}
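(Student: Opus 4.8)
The plan is to reduce \cref{lemma:blockConcentration} to the ordinary one-step Bernstein-type maximal inequality for $[0,1]$-valued adapted sequences, which I recall here: if $Y_1,Y_2,\dots$ is adapted to a filtration $\mathcal{G}_0\subseteq\mathcal{G}_1\subseteq\cdots$ with $Y_k\in[0,1]$, then for any $\delta'\in(0,1)$, with probability at least $1-\delta'$, simultaneously for all $k\ge 1$, $\sum_{j=1}^{k}\EE\brk[s]{Y_j \mid \mathcal{G}_{j-1}} \le 2\sum_{j=1}^{k}Y_j + 2\log(1/\delta')$. This is classical and self-contained: since $2^{-y}\le 1-y/2\le e^{-y/2}$ for $y\in[0,1]$, the process $M_k=\prod_{j\le k}2^{-Y_j}\exp\brk*{\tfrac12\EE\brk[s]{Y_j\mid\mathcal{G}_{j-1}}}$ is a nonnegative supermartingale with $M_0=1$, so Ville's inequality gives $\PP{\sup_k M_k\ge 1/\delta'}\le\delta'$, and on the complementary event rearranging $M_k<1/\delta'$ (which holds for all $k$) yields the display, using $2\ln 2\le 2$.

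The core step is to convert the $2H$-step conditioning into one-step conditioning by de-interleaving time. For each residue $r\in\{1,\dots,2H\}$ let $t^r_1<t^r_2<\cdots$ enumerate the indices $s\in\{1,\dots,T\}$ with $s\equiv r\pmod{2H}$, so that $t^r_k-t^r_{k-1}=2H$ by construction. Put $Y^r_k:=X_{t^r_k}$ and $\mathcal{G}^r_k:=\mathcal{F}_{t^r_k}$, with the convention $\mathcal{G}^r_0:=\mathcal{F}_{t^r_1-2H}$ (the trivial $\sigma$-algebra when $t^r_1-2H\le 0$). Then $(\mathcal{G}^r_k)_{k\ge 0}$ is a filtration, $Y^r_k\in[0,1]$ is $\mathcal{G}^r_k$-measurable, and crucially $\mathcal{G}^r_{k-1}=\mathcal{F}_{t^r_k-2H}$, so $\EE\brk[s]{Y^r_k\mid\mathcal{G}^r_{k-1}}=\EE\brk[s]{X_{t^r_k}\mid\mathcal{F}_{t^r_k-2H}}$. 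Applying the one-step inequality to each of the $2H$ subsequences with confidence $\delta/(2H)$ and taking a union bound, we get an event of probability at least $1-\delta$ on which, for every $r\in\{1,\dots,2H\}$ and every $k$, $\sum_{j\le k}\EE\brk[s]{X_{t^r_j}\mid\mathcal{F}_{t^r_j-2H}}\le 2\sum_{j\le k}X_{t^r_j}+2\log(2H/\delta)$.

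Finally I reassemble the blocks. Fix $t\le T$ and let $k_r:=\max\{k:\ t^r_k\le t\}$ (with $k_r=0$ if no such $k$); then $\{1,\dots,t\}$ is the disjoint union of $\{t^r_1,\dots,t^r_{k_r}\}$ over $r\in\{1,\dots,2H\}$. Summing the last display over all $r$ at $k=k_r$ gives $\sum_{s=1}^{t}\EE\brk[s]{X_s\mid\mathcal{F}_{s-2H}}\le 2\sum_{s=1}^{t}X_s+2H\cdot 2\log(2H/\delta)$, and since $H\le T$ the additive term is at most $4H\log(2T^2/\delta)\le 8H\log(2T^2/\delta)$, which is the claimed bound; it holds simultaneously for all $t\le T$ because the good event does not depend on $t$. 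The only delicate point I anticipate is the measurability bookkeeping of the de-interleaving — verifying that subsampling at gap exactly $2H$ makes the subsampled one-step filtration coincide with $(\mathcal{F}_{s-2H})_s$, and that the uniform-in-$k$ conclusion of the one-step inequality is precisely what allows the uniform-in-$t$ statement after re-summation; everything else is a union bound and index arithmetic.
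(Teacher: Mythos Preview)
Your proof is correct and follows the same de-interleaving strategy as the paper: split $\{1,\dots,T\}$ into $2H$ arithmetic progressions of gap $2H$, apply a one-step Bernstein-type tail bound to each subsequence with confidence $\delta/(2H)$, and recombine. The only minor difference is in how the simultaneity over $t$ is obtained: you invoke an anytime (maximal) version of the one-step inequality via a supermartingale/Ville argument, so the uniform-in-$t$ conclusion comes for free after summing over residues, whereas the paper cites a fixed-horizon version (\cref{lemma:multiplicative-concentration}, with additive term $4\log(2/\delta)$) and then takes an extra union bound over $t\in\{1,\dots,T\}$ with $\delta\leftarrow\delta/T$ at the end --- this is precisely where the $T^2$ inside the log in the stated bound comes from.
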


Next, an optimistic cost function should satisfy two properties. On the one hand, it is a global lower bound on the true cost function. On the other, it has a small error on the realized prediction sequence. Both of these properties are established in the following lemma (see proof in \cref{sec:lqrProofs}).
\begin{lemma}[Optimism]
\label{lemma:lqrOptimism}
    Let 
    $
    \Delta_t
    =
    \model_\star - \model_t
    $
    and suppose that the optimism parameter $\pOptimism$ satisfies
    $
    \sqrt{2 \dx (\dx + \du) H^2
    \tr{\Delta_{\tauIt}\tran V_{\tauIt} \Delta_{\tauIt}}}
    \le
    \pOptimism
    .
    $
    Then we have that for all $M \in \mathcal{M}$
    \begin{align*}
        \bar{F}_t(M)
        \le
        F(M)
        \le
        \bar{F}_t(M)
        +
        2\pOptimism \maxNoise \norm{V_{\tauIt}^{-1/2} \obsOp(M)}_F
    \end{align*}
    If additionally $t \ge \tauIt + 2H$ then
    \begin{align*}
    2 \pOptimism \maxNoise \norm{V_{\tauIt}^{-1/2} \obsOp(M)}_F
    \le
    2 \pOptimism \maxNoise \wMin^{-1}
    \sqrt{\EE[\ww]\norm{V_{\tauIt}^{-1/2} \obs_{t-1}(M; \ww)}^2}
\end{align*}
\end{lemma}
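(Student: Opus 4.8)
The plan is to follow the proof of \cref{lemma:ocoOptimismBound} almost verbatim, the one new ingredient being that the ``action'' in the control setting is the steady-state feature $\obs_{t-1}(M;\ww)$ rather than a fixed vector, so we must average over the disturbances and invoke the covariance assumptions to control it. I would first compare the true surrogate cost $F(M)=F(M;\model_\star)$ with its model-estimated version $F(M;\model_{\tauIt})$. Since $u_t(M;\ww)$ does not depend on the model and $x_t(M;\model,\ww)=\model\obs_{t-1}(M;\ww)+w_{t-1}$, the $1$-Lipschitzness of the cost gives, for every \emph{fixed} matrix $\model$, $\abs{F(M;\model)-F(M;\model_\star)}\le\EE[\ww]\norm{(\model-\model_\star)\obs_{t-1}(M;\ww)}$. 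Applying Jensen's inequality and the identity $\obs_{t-1}(M;\ww)=\obsOp(M)w_{t-2H:t-2}$ from \cref{eq:bounded-mem-rep}, the right-hand side is at most $\sqrt{\tr{(\model-\model_\star)\obsOp(M)\Sigma\obsOp(M)\tran(\model-\model_\star)\tran}}$ with $\Sigma=\EE w_{t-2H:t-2}w_{t-2H:t-2}\tran$. The bounded-noise and lower-bounded-covariance assumptions make $\Sigma$ block diagonal with $\wMin^2 I\preceq\Sigma\preceq\maxNoise^2 I$, so this bound, being deterministic in $\model$, continues to hold at $\model=\model_{\tauIt}$, yielding $\abs{F(M;\model_{\tauIt})-F(M)}\le\maxNoise\norm{\Delta_{\tauIt}\obsOp(M)}_F$ with $\Delta_{\tauIt}=\model_\star-\model_{\tauIt}$ (no independence between $\model_{\tauIt}$ and the window is needed here).

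Next I would split $\Delta_{\tauIt}\obsOp(M)=(\Delta_{\tauIt}V_{\tauIt}^{1/2})(V_{\tauIt}^{-1/2}\obsOp(M))$, bound the first factor's operator norm by its Frobenius norm $\sqrt{\tr{\Delta_{\tauIt}\tran V_{\tauIt}\Delta_{\tauIt}}}$, and bound the second factor's Frobenius norm by $\sqrt{m}\,\norm{V_{\tauIt}^{-1/2}\obsOp(M)}_\infty$, where $m\le 2\dx(\dx+\du)H^2$ is the number of entries of $\obsOp(M)$. The hypothesis $\sqrt{2\dx(\dx+\du)H^2\,\tr{\Delta_{\tauIt}\tran V_{\tauIt}\Delta_{\tauIt}}}\le\pOptimism$ (which, under the parameter choice of \cref{thm:lqrRegretFull}, is precisely what the second bound in \cref{lemma:lqrParameterEst} supplies) then collapses everything to $\abs{F(M;\model_{\tauIt})-F(M)}\le\pOptimism\maxNoise\norm{V_{\tauIt}^{-1/2}\obsOp(M)}_\infty$. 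The lower sandwich inequality $\bar{F}_t(M)\le F(M)$ is then immediate from the definition of $\bar{F}_t$, and the upper one follows by adding $2\pOptimism\maxNoise\norm{V_{\tauIt}^{-1/2}\obsOp(M)}_\infty$ back and relaxing $\norm{\cdot}_\infty\le\norm{\cdot}_F$.

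For the refinement when $t\ge\tauIt+2H$, I would condition on all randomness generated before the start of epoch $i(t)$, which makes $V_{\tauIt}$ deterministic; the $2H$-step gap guarantees that the noise window $w_{t-2H:t-2}$ entering $\obs_{t-1}(M;\ww)=\obsOp(M)w_{t-2H:t-2}$ is disjoint from---hence independent of---the disturbances and costs that built $V_{\tauIt}$. Hence
\begin{align*}
\EE[\ww]\norm{V_{\tauIt}^{-1/2}\obs_{t-1}(M;\ww)}^2
&=
\tr{V_{\tauIt}^{-1}\obsOp(M)\Sigma\obsOp(M)\tran}
\\
&=
\norm{V_{\tauIt}^{-1/2}\obsOp(M)\Sigma^{1/2}}_F^2
\ge
\wMin^{2}\norm{V_{\tauIt}^{-1/2}\obsOp(M)}_F^2
,
\end{align*}
using $\Sigma\succeq\wMin^2 I$; rearranging and multiplying by $2\pOptimism\maxNoise$ gives the second claim.

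I expect the main obstacle to be the norm bookkeeping in the first step: the naive bound $\norm{\obsOp(M)w_{t-2H:t-2}}\le\norm{\obsOp(M)}\,\norm{w_{t-2H:t-2}}\le\sqrt{2H}\,\maxNoise\,\norm{\obsOp(M)}$ loses an extra $\sqrt{H}$ factor that the optimism parameter cannot absorb, so it is essential to pass through the second moment and use the covariance bound $\Sigma\preceq\maxNoise^2 I$ instead of the per-coordinate noise bound. A secondary point requiring care is the conditioning in the last step---arranging it so that the $2H$-gap genuinely decouples the window from $V_{\tauIt}$---and observing that this is exactly where the assumption $\wMin>0$ (i.e., $\Sigma\succeq\wMin^2 I$) is indispensable.
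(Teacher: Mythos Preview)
Your proposal is correct and follows essentially the same route as the paper's proof: Lipschitz reduction to $\EE_\ww\norm{\Delta_{\tauIt}\obs_{t-1}(M;\ww)}$, Jensen to pass to the second moment, the identity $\obs_{t-1}(M;\ww)=\obsOp(M)w_{t-2H:t-2}$ together with $\Sigma\preceq\maxNoise^2 I$ to arrive at $\maxNoise\norm{\Delta_{\tauIt}V_{\tauIt}^{1/2}}\norm{V_{\tauIt}^{-1/2}\obsOp(M)}_F$, and finally the $\ell_2$--$\ell_\infty$ comparison on the $m\le 2\dx(\dx+\du)H^2$ entries of $V_{\tauIt}^{-1/2}\obsOp(M)$. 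The only cosmetic difference is that the paper applies Cauchy--Schwarz to split off $\norm{\Delta V^{1/2}}$ \emph{before} taking the expectation, whereas you first compute the trace with $\Sigma$ and then factor; the resulting bound is identical. Your treatment of the second claim (conditioning so that $V_{\tauIt}$ is fixed and the noise window is fresh, then invoking $\Sigma\succeq\wMin^2 I$) is exactly what the paper does.
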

We begin by defining the good event, which will be assumed to hold throughout. Let $\mathcal{F}_t = \sigma(w_1, \ldots w_{t-1}, \zt[1], \ldots, \zt[t-1])$ be the filtration generated by all randomness up to (not including) time $t$. Suppose that for all $i \ge 1$ and $j \ge 3$
\begin{align}
\label{eq:confidence-concentration}
    \sum_{t = \tauIJ+2H}^{\tauIJ[j+1]-1}
    \EE\brk[s]{\norm{V_{\tauI}^{-1/2} \obs_{t-1}(M_{\tauIJ}; \ww)}^2 \mid \mathcal{F}_{t-2H}}
    \le
    2\sum_{t = \tauIJ+2H}^{\tauIJ[j+1]-1}
    \norm{V_{\tauI}^{-1/2} \obs_{t-1}(M_{\tauIJ}; \ww)}^2
    +
    8 H \log \frac{12T^3}{\delta}
    ,
\end{align}
and that
\begin{align*}
    \sqrt{\sum_{t=1}^{T} \norm{w_t - \hat{w}_t}^2}
    &
    \le
    \wErr 
    = 
    10 \maxNoise \kappa \RM \Bbound \gamma^{-1} \sqrt{H (\dx + \du) (\dx^2 \kappa^2 + \du \Bbound^2) \log \frac{12T}{\delta}}
    ,
    \\
    \sqrt{\tr{\Delta_t\tran V_t \Delta_t}}
    &
    \le
    21 \maxNoise \RM \Bbound \kappa^2 H 
    \sqrt{\gamma^{-3} (\dx + \du) (\dx^2 \kappa^2 + \du \Bbound^2) \log \frac{12T}{\delta}}
    \qquad
    ,
    \forall t \le T
    .
\end{align*}
By \cref{lemma:technicalParameters}, we have that
\begin{align*}
    \norm{V_{\tauI}^{-1/2} \obs_{t-1}(M_{\tauIJ}; \ww)}^2
    \le
    \pRegTheta^{-1} \norm{\obs_{t-1}(M_{\tauIJ}; \ww)}^2
    \le
    (\sqrt{2} \maxNoise \RM H)^{-1} (\sqrt{2} \maxNoise \RM H)
    =
    1
    .
\end{align*}
We thus use \cref{lemma:blockConcentration} with $\delta / 6T$ and a union bound over the sub-epochs, and \cref{lemma:lqrParameterEst,lemma:disturbanceEstimation} with $\delta / 12$ to conclude that the above events hold with probability at least $1 - \delta / 3$.
Now, notice that
\begin{align*}
    &
    \sqrt{2 \dx (\dx + \du) H^2
    \tr{\Delta_{\tauIt}\tran V_{\tauIt} \Delta_{\tauIt}}}
    \\
    &
    \le
    30 \maxNoise \RM \Bbound \kappa^2 (\dx + \du) H^2 
    \sqrt{\dx \gamma^{-3} (\dx^2 \kappa^2 + \du \Bbound^2) \log \frac{12T}{\delta}}
    =
    \pOptimism
    ,
\end{align*}
and thus we \cref{lemma:lqrOptimism} holds. From its left hand side, we get that $\ocoR{4} \le 0$, and from its right hand side, we also get that
\begin{align*}
    \ocoR{2}
    &
    \le
    2 \pOptimism \maxNoise \wMin^{-1}
    \sum_{i=1}^{\nEpochs}
    \sum_{j=3}^{\nSubEpochs}
    \sum_{t=\tauIJ+2H}^{\tauIJ[j+1]-1}
    \sqrt{\EE[\ww]\norm{V_{\tauI}^{-1/2} \obs_{t-1}(M_{\tauIJ}; \ww)}^2}
    \\
    \tag{$V_{\tauI},M_{\tauIJ}$ $\mathcal{F}_{t-2H}$ measurable}
    &
    \le
    2 \pOptimism \maxNoise \wMin^{-1}
    \sum_{i=1}^{\nEpochs}
    \sum_{j=3}^{\nSubEpochs}
    \sum_{t=\tauIJ+2H}^{\tauIJ[j+1]-1}
    \sqrt{\EE \brk[s]{\norm{V_{\tauI}^{-1/2} \obs_{t-1}(M_{\tauIJ}; \ww)}^2 \mid \mathcal{F}_{t-2H}}}
    \\
    \tag{Jensen}
    &
    \le
    2 \pOptimism \maxNoise \wMin^{-1}
    \sqrt{T \sum_{i=1}^{\nEpochs}
    \sum_{j=3}^{\nSubEpochs}
    \sum_{t=\tauIJ+2H}^{\tauIJ[j+1]-1}
    \EE \brk[s]{\norm{V_{\tauI}^{-1/2} \obs_{t-1}(M_{\tauIJ}; \ww)}^2 \mid \mathcal{F}_{t-2H}}}
    \\
    \tag{\cref{eq:confidence-concentration}}
    &
    \le
    2 \pOptimism \maxNoise \wMin^{-1}
    \sqrt{T \brk[s]*{ \sum_{i=1}^{\nEpochs}
    \sum_{j=3}^{\nSubEpochs}
    8 H \log \frac{12T^3}{\delta}
    +
    2
    \sum_{t=\tauIJ+2H}^{\tauIJ[j+1]-1}
    \norm{V_{\tauI}^{-1/2} \obs_{t-1}(M_{\tauIJ}; \ww)}^2
    }}
    \\
    \tag{\cref{lemma:epochLengths}}
    &
    \le
    2 \pOptimism \maxNoise \wMin^{-1}
    \sqrt{T \brk[s]*{
    32 H^2 (\dx + \du) \log^3 \frac{12T^3}{\delta}
    +
    2
    \sum_{i=1}^{\nEpochs}
    \sum_{j=3}^{\nSubEpochs}
    \sum_{t=\tauIJ+2H}^{\tauIJ[j+1]-1}
    \norm{V_{\tauI}^{-1/2} \obs_{t-1}(M_{\tauIJ}; \ww)}^2
    }}
    .
\end{align*}
We bound the remaining term in two steps. First, we use the fact that $\obs_t(M; \ww)$ is $\RM \sqrt{H}-$Lipschitz with respect to $w_{t-2H:t-1}$ (see \cref{lemma:technicalParameters}) to get that
\begin{align*}
    \sum_{i=1}^{\nEpochs}
    \sum_{j=3}^{\nSubEpochs}
    \sum_{t=\tauIJ+2H}^{\tauIJ[j+1]-1}
    &
    \norm{V_{\tauI}^{-1/2} \brk[s]{\obs_{t-1}(M_{\tauIJ}; \ww) - \obs_{t-1}(M_{\tauIJ}; \wwhat)}}^2
    \\
    &
    \le
    \pRegTheta^{-1}
    \sum_{i=1}^{\nEpochs}
    \sum_{j=3}^{\nSubEpochs}
    \sum_{t=\tauIJ+2H}^{\tauIJ[j+1]-1}
    \norm{\obs_{t-1}(M_{\tauIJ}; \ww) - \obs_{t-1}(M_{\tauIJ}; \wwhat)}^2
    \\
    &
    \le
    \frac{1}{2 H \maxNoise^2}
    \sum_{i=1}^{\nEpochs}
    \sum_{j=3}^{\nSubEpochs}
    \sum_{t=\tauIJ+2H}^{\tauIJ[j+1]-1}
    \norm{w_{t-2H:t-2} - \hat{w}_{t-2H:t-2}}^2
    \\
    &
    \le
    \frac{1}{\maxNoise^2}
    \sum_{t=1}^{T}
    \norm{w_t - \hat{w}_t}^2
    \\
    &
    \le
    100 \kappa^2 \gamma^{-2} \RM^2 \Bbound^2  H (\dx + \du) (\dx^2 \kappa^2 + \du \Bbound^2) \log \frac{12T}{\delta}
\end{align*}
Next, for any $t \in \brk[s]{\tauIJ + 2H, \tauIJ[j+1]-1}$ notice that 
$
\obs_t = \obs_t(M_{\tauIJ}; \wwhat)
,
$
and that \cref{alg:lqr} ensures that $\det(V_{t-1}) \le 2 \det(V_{\tauI})$.
We thus have that
\begin{align*}
    \sum_{i=1}^{\nEpochs}
    \sum_{j=3}^{\nSubEpochs}
    \sum_{t=\tauIJ+2H}^{\tauIJ[j+1]-1}
    \norm{V_{\tauI}^{-1/2} \obs_{t-1}}^2
    &
    =
    \sum_{i=1}^{\nEpochs}
    \sum_{j=3}^{\nSubEpochs}
    \sum_{t=\tauIJ+2H}^{\tauIJ[j+1]-1}
    \obs_{t-1}\tran V_{\tauI}^{-1} \obs_{t-1}
    \\
    \tag{\cref{lemma:vTau-to-vt}}
    &
    \le
    2
    \sum_{t=2}^{T}
    \obs_{t-1}\tran V_{t-1}^{-1} \obs_{t-1}
    \\
    \tag{\cref{lemma:harmonicBound}}
    &
    \le
    10 (\dx + \du) H \log T
    .
\end{align*}
We conclude that
\begin{align*}
    \sum_{i=1}^{\nEpochs}
    \sum_{j=3}^{\nSubEpochs}
    \sum_{t=\tauIJ+2H}^{\tauIJ[j+1]-1}
    \norm{V_{\tauI}^{-1/2} \obs_{t-1}(M_{\tauIJ}; \ww)}^2
    \le
    220 \kappa^2 \gamma^{-2} \RM^2 \Bbound^2  H (\dx + \du) (\dx^2 \kappa^2 + \du \Bbound^2) \log \frac{12T}{\delta}
    ,
\end{align*}
and plugging this into the bound in $\ocoR{2}$ we get that
\begin{align*}
    &
    \ocoR{2}
    \\
    &
    \le
    2 \pOptimism \maxNoise \wMin^{-1}
    \sqrt{T \brk[s]*{
    32 H^2 (\dx + \du) \log^3 \frac{12T^3}{\delta}
    +
    440 \frac{\kappa^2}{\gamma^{2}} \RM^2 \Bbound^2  H (\dx + \du) (\dx^2 \kappa^2 + \du \Bbound^2) \log \frac{12T}{\delta}
    }}
    \\
    &
    \le
    20 \pOptimism \kappa \gamma^{-1} \wMin^{-1} \maxNoise \RM \Bbound H
    \sqrt{
    T (\dx + \du) (\dx^2 \kappa^2 + \du \Bbound^2) \log^3 \frac{12T}{\delta}
    }
    ,
\end{align*}
where the second inequality also used that $H , \log T \ge 2$.
\hfill$\blacksquare$

\subsection{Proof of \cref{lemma:lqrR3}}
\label{sec:excessRiskCostProof}
Recall that $\ocoR{3}$ is a sum over the excess risk of the optimistic cost function, i.e.,
$
\bar{F}_t(M_{\tauIJ}) - \bar{F}_t(M_\star)
.
$
However, \cref{alg:lqr} does not have access to this cost and thus optimizes its empirical version.
Concretely, define the per time step optimistic cost
\begin{align*}
    \bar{f}_t(M; \model, V, \ww, \zt[])
    &
    =
    c_t(x_t(M; \model, \ww), u_t(M; \ww), \zt[])
    -
    \pOptimism \maxNoise \norm{V^{-1/2} \obsOp(M)}_\infty
    \\
    &
    =
    f_t(M; \model, \ww, \zt[])
    -
    \pOptimism \maxNoise \norm{V^{-1/2} \obsOp(M)}_\infty
    ,
\end{align*}
and its instances
\begin{align*}
    \hat{f}_t(M)
    &
    =
    \bar{f}(M; \model_{\tauIt}, V_{\tauIt}, \wwhat, \zt)
    =
    f_t(M; \model_{\tauIt}, \wwhat, \zt)
    -
    \pOptimism \maxNoise \norm{V_{\tauIt}^{-1/2} \obsOp(M)}_\infty
    \\
    \bar{f}_t(M)
    &
    =
    \bar{f}(M; \model_{\tauIt}, V_{\tauIt}, \ww, \zt)
    =
    f_t(M; \model_{\tauIt}, \ww, \zt)
    -
    \pOptimism \maxNoise \norm{V_{\tauIt}^{-1/2} \obsOp(M)}_\infty
    ,
\end{align*}
where recall that $i(t) = \max\brk[c]{i \;:\; \tauI \le t}$ is the index of the episode to which $t$ belongs.
We start by defining a good event. We assume that the following events hold throughout the proof:
\begin{align*}
    \sqrt{\sum_{t=1}^{T} \norm{w_t - \hat{w}_t}^2}
    &
    \le
    \wErr 
    = 
    10 \maxNoise \kappa \RM \Bbound \gamma^{-1} \sqrt{H (\dx + \du) (\dx^2 \kappa^2 + \du \Bbound^2) \log \frac{12T}{\delta}}
    ,
    \\
    \norm{\brk{\model_{t} \; I}}_F
    &
    \le
    17 \Bbound \kappa^2 
    \sqrt{\gamma^{-3} (\dx + \du) (\dx^2 \kappa^2 + \du \Bbound^2) \log \frac{12T}{\delta}}
    \qquad
    ,
    \forall t \le T
    ,
\end{align*}
and for all $i \ge 1, j \ge 3, M \in \mathcal{M}$
\begin{align*}
    \abs*{
    \sum_{t =\tauIJ[j-1]}^{\tauIJ-1}
    \bar{f}_t(M; \model_{\tauI}, V_{\tauI}, \ww)
    -
    \bar{F}(M; \model_{\tauI}, V_{\tauI})
    }
    \le
    \maxF^{\max} H \sqrt{2 (t_2-t_1) \dx \du \log \frac{18 T^5}{\delta}}
    ,
\end{align*}
where $
\maxF^{\max}
=
\max_{1 \le i \le \nEpochs} \maxF(\model_{\tauI})
,
$
and
$\maxF(\model) \ge 1$ is defined in \cref{lemma:fbarProperties} and bounds the summands.
We show that the above event holds with probability at least $1-\delta/3$. The first two parts each hold with probability $1-\delta/12$ (see \cref{lemma:disturbanceEstimation,lemma:lqrParameterEst} with $\delta/12$). As for the last part, notice that for $t \ge \tauI + 2H$
\begin{align*}
    \EE \brk[s]{\bar{f}_t(M; \model_{\tauI}, V_{\tauI}, \ww)
    \mid
    \model_{\tauI}, V_{\tauI}
    }
    =
    \bar{F}(M; \model_{\tauI}, V_{\tauI})
    ,
\end{align*}
and that conditioned on $\model_{\tauI}, V_{\tauI}$ we have that
$\bar{f}_t(M; \model_{\tauI}, V_{\tauI}, \ww)$ are i.i.d at $2H$ increments.
We thus use \cref{lemma:blockUC} with $\delta / 6 T^3$ and a union bound over epoch, sub-epochs, and interval lengths to get that the last term holds with probability at least $1-\delta/6$. Taking another union bound, all three parts holds simultaneously with probability at least $1-\delta/3$.

Then the optimistic cost minimization in \cref{alg:lqr} can be written as
\begin{align*}
    M_{\tauIJ}
    \in
    \argmin_{M \in \mathcal{M}} 
    \sum_{t=\tauIJ[j-1]}^{\tauIJ-1} \hat{f}_t(M)
    ,
\end{align*}
and thus we have that
\begin{align}
\label{eq:optimisticMin}
    \sum_{t=\tauIJ[j-1]}^{\tauIJ-1} \brk*{
    \hat{f}_t(M_{\tauIJ}) - \hat{f}_t(M)
    }
    \le 
    0
    ,
    \;
    \;
    \forall M \in \mathcal{M}
    .
\end{align}

It remains to relate $\hat{f}_t$ and $\bar{F}_t$.
We start by using \cref{lemma:fbarProperties} to get that for all $M \in \mathcal{M}$
\begin{align*}
    \abs{\bar{f}_t(M) - \hat{f}_t(M)}
    =
    \abs{f_t(M; \model_{\tauIt}, \ww) - f_t(M; \model_{\tauIt}, \wwhat)}
    \le
    \pLipW^{\max} \norm{w_{t-2H:t-1} - \hat{w}_{t-2H:t-1}}
    ,
\end{align*}
where $\pLipW^{\max} = \max_{i \ge 1} (\model_{\tauIt})$, and $\pLipW(\model)$ is defined in \cref{lemma:fbarProperties}.
We thus have
\begin{align*}
    \sum_{t=\tauIJ[j-1]}^{\tauIJ-1}\brk[s]*{
    \bar{f}_t(M_{\tauIJ}) - \bar{f}_t(M_\star)
    }
    &
    \le
    \sum_{t=\tauIJ[j-1]}^{\tauIJ-1}\brk[s]*{
    \hat{f}_t(M_{\tauIJ}) - \hat{f}_t(M_\star)
    +
    2 \pLipW^{\max} \norm{\hat{w}_{t-2H:t-1} - w_{t-2H:t-1}}
    }
    \\
    \tag{\cref{eq:optimisticMin}}
    &
    \le
    2 \pLipW^{\max}
    \sum_{t=\tauIJ[j-1]}^{\tauIJ-1} \norm{\hat{w}_{t-2H:t-1} - w_{t-2H:t-1}}
    \\
    \tag{Jensen}
    &
    \le
    2 \pLipW^{\max} \sqrt{
    (\tauIJ - \tauIJ[j-1]) \sum_{t=\tauIJ[j-1]}^{\tauIJ-1} \norm{\hat{w}_{t-2H:t-1} - w_{t-2H:t-1}}^2
    }
    \\
    &
    \le
    \pLipW^{\max} \sqrt{
    8 (\tauIJ - \tauIJ[j-1]) H \sum_{t=1}^{T} \norm{w_t - \hat{w}_t}^2
    }
    \\
    &
    \le
    \pLipW^{\max} \wErr \sqrt{
    8 (\tauIJ - \tauIJ[j-1]) H
    }
    .
\end{align*}
Now, for $i \ge 1$ and $\tauI+2H \le t \le \tauI[i+1]-1$ we have
\begin{align*}
    &
    (\tauIJ - \tauIJ[j-1])
    (
    \bar{F}_t(M_{\tauIJ}) - \bar{F}_t(M_\star)
    )
    =
    (\tauIJ - \tauIJ[j-1])
    (
    \bar{F}(M_{\tauIJ}; \model_{\tauI}, V_{\tauI})
    -
    \bar{F}(M_\star; \model_{\tauI}, V_{\tauI})
    )
    \\
    &
    \le
    \sum_{t=\tauIJ[j-1]}^{\tauIJ-1}(\bar{f}_t(M_{\tauIJ}) - \bar{f}_t(M_\star))
    +
    \maxF^{\max} H \sqrt{8(\tauIJ - \tauIJ[j-1]) \dx \du \log \frac{18 T^5}{\delta}}
    \\
    &
    \le
    \pLipW^{\max} \wErr \sqrt{
    8 (\tauIJ - \tauIJ[j-1]) H
    }
    +
    \maxF^{\max} H \sqrt{8(\tauIJ - \tauIJ[j-1]) \dx \du \log \frac{18 T^5}{\delta}}
    \\
    &
    =
    \brk[s]*{
    \pLipW^{\max} \wErr 
    +
    \maxF^{\max} \sqrt{H \dx \du \log \frac{18 T^5}{\delta}}
    }
    \sqrt{
    8 (\tauIJ - \tauIJ[j-1]) H
    }
    .
\end{align*}
Since $\norm{(\model_t \; I)}$ is bounded, \cref{lemma:fbarProperties} gives us that
\begin{align*}
    \maxF^{\max}
    \le
    2\costVar
    +
    3 \pOptimism / (H \sqrt{\dx(\dx+\du)})
    ,
    \quad
    \text{and}
    \;\;
    \pLipW^{\max}
    \le
    \pOptimism / (\maxNoise \sqrt{\dx (\dx + \du) H^3})
    .
\end{align*}
Finally, notice that each sub-epoch is at most twice as long as its predecessor. We conclude that
\begin{align*}
    \ocoR{3}
    &
    =
    \sum_{i=1}^{\nEpochs}
    \sum_{j=3}^{\nSubEpochs}
    \sum_{t=\tauIJ+2H}^{\tauIJ[j+1]-1}
    \bar{F}_t(M_{\tauIJ}) - \bar{F}_t(M_\star)
    \\
    &
    \le
    \sum_{i=1}^{\nEpochs}
    \sum_{j=3}^{\nSubEpochs}
    \sum_{t=\tauIJ+2H}^{\tauIJ[j+1]-1}
    \brk[s]*{
    \pLipW^{\max} \wErr 
    +
    \maxF^{\max} \sqrt{H \dx \du \log \frac{18 T^5}{\delta}}
    }
    \sqrt{\frac{8 H}{\tauIJ - \tauIJ[j-1]}
    }
    \\
    &
    \le
    4
    \sum_{i=1}^{\nEpochs}
    \sum_{j=3}^{\nSubEpochs}
    \brk[s]*{
    \pLipW^{\max} \wErr 
    +
    \maxF^{\max} \sqrt{H \dx \du \log \frac{18 T^5}{\delta}}
    }
    \sqrt{H (\tauIJ[j+1] - \tauIJ)}
    \\
    \tag{Jensen}
    &
    \le
    8
    \brk[s]*{
    \pLipW^{\max} \wErr 
    +
    \maxF^{\max} \sqrt{H \dx \du \log \frac{18 T^5}{\delta}}
    }
    H \log(T)
    \sqrt{T (\dx + \du)}
    \\
    &
    \le
    104 \pOptimism \kappa \gamma^{-1} \RM \Bbound \sqrt{T H \du (\dx^2 \kappa^2 + \du \Bbound^2) \log^3 \frac{18 T^5}{\delta}}
    +
    16 \costVar \sqrt{
    T
    H^3 \dx \du \log^3 \frac{18 T^5}{\delta}
    }
    \\
    &
    =
    \brk[s]*{
    104 \pOptimism \kappa \gamma^{-1} \RM \Bbound \sqrt{\dx^2 \kappa^2 + \du \Bbound^2}
    +
    16 \costVar H \sqrt{\dx}
    }
    \sqrt{
    T
    H \du \log^3 \frac{18 T^5}{\delta}
    }
    ,
\end{align*}
where the last inequality plugged in the values of $\wErr, \pLipW^{\max}, \maxF^{\max}.$
\hfill$\blacksquare$

\section{Proofs of Side Lemmas}
\label{sec:lqrProofs}

\begin{proof}[of \cref{lemma:epochLengths}]
    The algorithm ensures that 
    \begin{align*}
    \det(V_T)
    \ge
    \det(V_{\tauI[\nEpochs]}) 
    \ge
    2 \det(V_{\tauI[\nEpochs-1]})
    \ldots
    \ge
    2^{\nEpochs-1} \det{V_1}
    ,
    \end{align*}
    and changing sides, and taking the logarithm we conclude that
    \begin{align*}
    \nEpochs
    &
    \le
    1
    +
    \log \brk*{\det(V_{T}) / \det(V)}
    \\
    &
    =
    1
    +
    \log \det(V^{-1/2} V_{T+1} V^{-1/2})
    \\
    \tag{$\det(A) \le \norm{A}^d$}
    &
    \le
    1
    +
    (\dx+\du)H \log \norm{V^{-1/2} V_{T} V^{-1/2}}
    \\
    \tag{triangle inequality}
    &
    \le
    1
    +
    (\dx+\du)H \log \brk*{1 + \frac{1}{\pRegTheta}\sum_{t=1}^{T-1} \norm{\obs_t}^2}
    \\
    &
    \le
    1
    +
    (\dx+\du)H \log T
    \\
    \tag{$T \ge 3$}
    &
    \le
    2(\dx+\du)H \log T
    ,
\end{align*}
where the second to last inequality holds since $\norm{\obs_t}^2 \le \pRegTheta$ by \cref{lemma:technicalParameters}.
Next, for $\nSubEpochs$ we have
\begin{align*}
    T
    \ge
    \tauIJ[\nSubEpochs] - \tauI
    \ge
    2 \tauIJ[\nSubEpochs-1] - \tauI
    \ldots
    \ge
    2^{\nSubEpochs-2} (\tauIJ[2] - \tauI) \indEvent{\nSubEpochs \ge 2}
    =
    2^{\nSubEpochs-2} \indEvent{\nSubEpochs \ge 2}
    .
\end{align*}
We conclude that either $\nSubEpochs = 1 \le \log T$ or
\begin{align*}
    \nSubEpochs
    \le
    2 + \log_2 T
    \le
    2 \log T
    ,
\end{align*}
where the last inequality holds for $T > 20$.
\end{proof}

\begin{proof}
[of \cref{lemma:lqrOptimism}]
Recall that
$
F(M ;\model)
=
\EE[\zt,\ww] c_t(x_t(M; {\model}, \ww), u_t(M; \ww))
$
and let $\Delta = \model_\star - \model$. Suppose that
$
\norm{\Delta V^{1/2}} 
\le
\pOptimism \brk{2 \dx (\dx + \du) H^2}^{-1/2}
,
$
then, using the Lipschitz property, we have
\begin{align*}
    \abs{
    F(M) - F(M; \model)
    }
    &
    \le
    \EE[\zt, \ww]
    \abs*{
    c_t(x_t(M; \model_\star, \ww), u_t(M; \ww))
    -
    c_t(x_t(M; \model, \ww), u_t(M; \ww))
    }
    \\
    &
    \le
    \EE[\zt, \ww]
    \norm{
    x_t(M; \model, \ww) 
    -
    x_t(M; \model_\star, \ww)}
    \\
    &
    =
    \EE[\ww] \norm{\Delta \obs_{t-1}(M; \ww)}
    \\
    \tag{Cauchy-Schwarz}
    &
    \le
    \norm{\Delta V^{1/2}}
    \EE[\ww] \norm{V^{-1/2} \obs_{t-1}(M; \ww)}
    \\
    \tag{Jensen}
    &
    \le
    \norm{\Delta V^{1/2}}
    \sqrt{\EE[\ww] \obs_{t-1}(M; \ww)\tran V^{-1} \obs_{t-1}(M; \ww)}
    \\
    \tag{\cref{eq:bounded-mem-rep}}
    &
    =
    \norm{\Delta V^{1/2}}
    \sqrt{\tr{\obsOp(M)\tran V^{-1} \obsOp(M) \EE[\ww]
    \brk[s]{w_{t-2H:t-2} w_{t-2H:t-2}\tran}
    }}
    \\
    \tag{$\tr{AB} \le \tr{A} \norm{B}$}
    &
    \le
    \maxNoise \norm{\Delta V^{1/2}}
    \sqrt{\tr{\obsOp(M)\tran V^{-1} \obsOp(M)}}
    \\
    &
    =
    \maxNoise \norm{\Delta V^{1/2}}
    \norm{V^{-1/2} \obsOp(M)}_F
    \\
    \tag{$\norm{x}_2 \le \sqrt{d} \norm{x}_\infty$}
    &
    \le
    \maxNoise \sqrt{2 \dx (\dx + \du) H^2} \norm{\Delta V^{1/2}}
    \norm{V^{-1/2} \obsOp(M)}_\infty
    \\
    &
    \le
    \pOptimism \maxNoise
    \norm{V^{-1/2} \obsOp(M)}_\infty
    .
\end{align*}
Now, since $\model_{\tauIt}$ is assumed to satisfy the above condition, we get that
\begin{align*}
    F(M)
    \ge
    F(M; \model_{\tauIt})
    -
    \pOptimism \maxNoise \norm{V_{\tauIt}^{-1/2} \obsOp(M)}_\infty
    =
    \bar{F}_t(M)
    ,
\end{align*}
and on the other hand
\begin{align*}
    F(M)
    &
    \le
    F(M; \model_{\tauIt})
    +
    \pOptimism \maxNoise \norm{V_{\tauIt}^{-1/2} \obsOp(M)}_\infty
    \\
    &
    =
    \bar{F}_t(M)
    +
    2 \pOptimism \maxNoise \norm{V_{\tauIt}^{-1/2} \obsOp(M)}_\infty
    \\
    &
    \le
    \bar{F}_t(M)
    +
    2 \pOptimism \maxNoise \norm{V_{\tauIt}^{-1/2} \obsOp(M)}_F
    .
\end{align*}
Now, if $t \ge \tauIt + 2H$ then $V_{\tauIt}$ is independent of $w_{t-2H:t-2}$. Next, let 
$
\EE[\ww] \brk[s]{w_{t-2H:t-2} w_{t-2H:t-2}\tran}
$
$
=
\Sigma
$
and notice that the minimum covariance assumption implies that $\norm{\Sigma^{-1/2}} \le \wMin^{-1}$.
We thus have
\begin{align*}
    2 \pOptimism \maxNoise \norm{V_{\tauIt}^{-1/2} \obsOp(M)}_F
    &
    =
    2 \pOptimism \maxNoise \sqrt{\tr{\obsOp(M)\tran V_{\tauIt}^{-1} \obsOp(M)}}
    \\
    &
    \le
    2 \pOptimism \maxNoise \wMin^{-1}
    \sqrt{\tr{\obsOp(M)\tran V_{\tauIt}^{-1} \obsOp(M)\EE[\ww] \brk[s]{w_{t-2H:t-2} w_{t-2H:t-2}\tran}}}
    \\
    \tag{\cref{eq:bounded-mem-rep}}
    &
    =
    2 \pOptimism \maxNoise \wMin^{-1}
    \sqrt{\EE[\ww]\norm{V_{\tauIt}^{-1/2} \obs_{t-1}(M; \ww)}^2}
    .
\end{align*}
\end{proof}

\begin{lemma}
\label{lemma:regret-final-bound}
We have that with probability at least $1-\delta$
\begin{align*}
    \mathrm{regret}_T(\pi)
    &
    \le
    2860  
    \kappa^3 \gamma^{-11/2} \wMin^{-1} \maxNoise^2 \RM^2 \Bbound^2 
    (\dx^2 \kappa^2 + \du \Bbound^2)
    \sqrt{
    T \dx (\dx + \du)^3 
    }
    \log^5 \frac{18T^5}{\delta}
    \\
    &
    +
    12\costVar
    \sqrt{T \gamma^{-3} (\dx + \du) (\dx^2 \kappa^2 + \du^2 \Bbound^2)}
    \log^3 \frac{18 T^5}{\delta}
\end{align*}
\end{lemma}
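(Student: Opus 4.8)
The statement is a purely algebraic consolidation of the three term bounds proved in \cref{lemma:lqrR15,lemma:lqrR24,lemma:lqrR3} together with the regret decomposition established in the proof of \cref{thm:lqrRegretFull}; no new probabilistic or analytic argument is needed. The plan proceeds in three bookkeeping steps.

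\textbf{Step 1 (assemble the pieces).} Take a union bound over the events of \cref{lemma:lqrR15,lemma:lqrR24,lemma:lqrR3}, each of probability at least $1-\delta/3$, so that all three hold simultaneously with probability at least $1-\delta$. On this event, start from the two-level decomposition derived in the proof of \cref{thm:lqrRegretFull},
\begin{align*}
    \mathrm{regret}_T(\pi)
    \le
    16 \RxuMax (\dx+\du) H^2 \log^2 T
    + \ocoR{1} + \ocoR{2} + \ocoR{3} + \ocoR{4} + \ocoR{5}
    ,
\end{align*}
use $\ocoR{4} \le 0$ from \cref{lemma:lqrR24}, and substitute the bounds on $\ocoR{1}+\ocoR{5}$, on $\ocoR{2}$, and on $\ocoR{3}$ from \cref{lemma:lqrR15,lemma:lqrR24,lemma:lqrR3} respectively.

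\textbf{Step 2 (substitute the parameters and isolate the dominant term).} Plug in $H=\gamma^{-1}\log T$, $\RxuMax=\kappa\Bbound\maxNoise\RM\sqrt{5H}/\gamma$, and the stated value of $\pOptimism$. The switching term $16\RxuMax(\dx+\du)H^2\log^2 T$ is polylogarithmic in $T$; bounding $\log^{9/2}T\le C\sqrt T$ for a universal constant $C$ makes it $O(\sqrt T)$, and it is dominated in every parameter by the $\ocoR{2}$ bound. Substituting $\pOptimism$ (which itself carries the factor $H^2\sqrt{\dx\gamma^{-3}(\dx^2\kappa^2+\du\Bbound^2)\log(12T/\delta)}$) and $H^3=\gamma^{-3}\log^3 T$ into the $\ocoR{2}$ bound of \cref{lemma:lqrR24} yields, after collecting exponents, an expression of the form $c_2\,\kappa^3\gamma^{-11/2}\wMin^{-1}\maxNoise^2\RM^2\Bbound^2(\dx^2\kappa^2+\du\Bbound^2)\sqrt{T\dx(\dx+\du)^3}\,\log^5(\cdot)$; this is the dominant term and the source of the exponents $\gamma^{-11/2}$, $\kappa^3$, $\maxNoise^2$, $\log^5$ in the first displayed line. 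The $\costVar$-free parts of $\ocoR{1}+\ocoR{5}$ and of $\ocoR{3}$ carry strictly smaller exponents in $\gamma^{-1}$, $\kappa$, $\maxNoise$ (for instance $\gamma^{-4}\maxNoise$ for $\ocoR{1}+\ocoR{5}$ after plugging in $H$), so, using $\RM,\Bbound,\kappa\ge 1$ and $\gamma,\wMin\le$ their natural bounds, they are absorbed into the first displayed term with only a bump to the numerical constant.

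\textbf{Step 3 (the $\costVar$ terms and final tidying).} The $\costVar$-dependent contributions — $2\costVar\sqrt{32TH^3(\dx+\du)(\dx^2\kappa^2+\du^2\Bbound^2)\log^3(18T^4/\delta)}$ from \cref{lemma:lqrR15} and $16\costVar\sqrt{TH^3\dx\du\log^3(18T^5/\delta)}$ from \cref{lemma:lqrR3} — combine, using $H^3=\gamma^{-3}\log^3 T$, into the second displayed line $12\costVar\sqrt{T\gamma^{-3}(\dx+\du)(\dx^2\kappa^2+\du^2\Bbound^2)}\,\log^3(18T^5/\delta)$. Since $T\ge 64\RM^2\ge 64$, all of $\log(12T/\delta),\log(18T^4/\delta),\log(18T^5/\delta),\log T$ are at least $1$ and at most $\log(18T^5/\delta)$, so their powers may be merged into $\log^5(18T^5/\delta)$ and $\log^3(18T^5/\delta)$; one then sums the explicit numerical constants of the four contributing bounds and checks that they total at most $2860$ and $12$, which gives the claim. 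The main obstacle is the bookkeeping itself: correctly tracking the powers of $\gamma^{-1}$, $\kappa$, $\maxNoise$, $H$, and $\log T$ when the definitions of $H$ and $\pOptimism$ are folded into $\ocoR{2}$ (this is precisely what produces $\gamma^{-11/2}$ and $\log^5$), and then verifying that every remaining term is genuinely of lower order so that a single pair of constants suffices.
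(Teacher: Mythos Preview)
Your proposal is correct and follows essentially the same route as the paper: take a union bound over \cref{lemma:lqrR15,lemma:lqrR24,lemma:lqrR3}, plug in $H=\gamma^{-1}\log T$ and the value of $\pOptimism$, group the $\costVar$-free pieces into the first displayed line and the $\costVar$-dependent pieces into the second, and verify the numerical constants. The paper's own proof organizes the arithmetic slightly differently (it first combines $\ocoR{2}+\ocoR{3}$ to reach $2820$, then $\ocoR{1}+\ocoR{5}$ to add the remaining $40$ and the $\costVar$ term), and in fact is less explicit than you are about absorbing the switching term $16\RxuMax(\dx+\du)H^2\log^2 T$; but both arguments are the same straightforward bookkeeping.
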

\begin{proof}
Suppose that the events of \cref{lemma:lqrR15,lemma:lqrR24,lemma:lqrR3} hold. By a union bound, this holds with probability at least $1-\delta$.
Now, we simplify each of the terms before deriving the final bound.
Recall from \cref{thm:lqrRegretFull} that
\begin{align*}
    \pOptimism
    &
    =
    30 \maxNoise \RM \Bbound \kappa^2 (\dx + \du) H^2 
    \sqrt{\dx \gamma^{-3} (\dx^2 \kappa^2 + \du \Bbound^2) \log \frac{12T}{\delta}}
    \\
    &
    \le
    30 \maxNoise \RM \Bbound \kappa^2 (\dx + \du)
    \sqrt{\dx \gamma^{-7} (\dx^2 \kappa^2 + \du \Bbound^2) \log^5 \frac{12T}{\delta}}
\end{align*}
Also using the fact that $H \ge 2$ we get
\begin{align*}
    \ocoR{2} + \ocoR{3}
    &
    \le
    20 \pOptimism \kappa \gamma^{-1} \wMin^{-1} \maxNoise \RM \Bbound H
    \sqrt{
    T (\dx + \du) (\dx^2 \kappa^2 + \du \Bbound^2) \log^3 \frac{12T}{\delta}
    }
    \\
    &
    +
    104 \pOptimism \kappa \gamma^{-1} \RM \Bbound \sqrt{T H \du (\dx^2 \kappa^2 + \du \Bbound^2) \log^3 \frac{18 T^5}{\delta}}
    +
    16 \costVar \sqrt{
    T
    H^3 \dx \du \log^3 \frac{18 T^5}{\delta}
    }
    \\
    &
    \le
    94 \pOptimism \kappa \gamma^{-1} \wMin^{-1} \maxNoise \RM \Bbound H
    \sqrt{
    T (\dx + \du) (\dx^2 \kappa^2 + \du \Bbound^2) \log^3 \frac{18T^5}{\delta}
    }
    \\
    &
    +
    16 \costVar \sqrt{
    T
    H^3 \dx \du \log^3 \frac{18 T^5}{\delta}
    }
    \\
    &
    \le
    2820  
    \kappa^3 \gamma^{-11/2} \wMin^{-1} \maxNoise^2 \RM^2 \Bbound^2 
    (\dx^2 \kappa^2 + \du \Bbound^2)
    \sqrt{
    T \dx (\dx + \du)^3 
    }
    \log^5 \frac{18T^5}{\delta}
    \\
    &
    +
    16 \costVar \sqrt{
    T
    \gamma^{-3}
    \dx \du
    }
    \log^3 \frac{18 T^5}{\delta}
    .
\end{align*}
Next, we have
\begin{align*}
\ocoR{1} + \ocoR{5}
&
\le
(2\costVar
+
7 \kappa^2 \gamma^{-2} \Bbound^2 \RM^2 \maxNoise \sqrt{H})
\sqrt{32 T H^3 (\dx + \du) (\dx^2 \kappa^2 + \du^2 \Bbound^2) \log^3 \frac{18 T^4}{\delta}}
\\
&
\le
40 \kappa^2 \gamma^{-4} \Bbound^2 \RM^2 \maxNoise 
\sqrt{T (\dx + \du) (\dx^2 \kappa^2 + \du^2 \Bbound^2) \log^7 \frac{18 T^5}{\delta}}
\\
&
+
12\costVar
\sqrt{T \gamma^{-3} (\dx + \du) (\dx^2 \kappa^2 + \du^2 \Bbound^2)}
\log^3 \frac{18 T^5}{\delta}
.
\end{align*}
Combining both bounds, we conclude that
\begin{align*}
    \mathrm{regret}_T(\pi)
    &
    \le
    2860  
    \kappa^3 \gamma^{-11/2} \wMin^{-1} \maxNoise^2 \RM^2 \Bbound^2 
    (\dx^2 \kappa^2 + \du \Bbound^2)
    \sqrt{
    T \dx (\dx + \du)^3 
    }
    \log^5 \frac{18T^5}{\delta}
    \\
    &
    +
    12\costVar
    \sqrt{T \gamma^{-3} (\dx + \du) (\dx^2 \kappa^2 + \du^2 \Bbound^2)}
    \log^3 \frac{18 T^5}{\delta}
    .
    \qedhere
\end{align*}
\end{proof}

\section{Technical Lemmas and Proofs}
\label{sec:technicalProofs}

\subsection{Algebraic Lemmas}
The following is a statement of Lemma 27 of \cite{cohen2019learning}.
\begin{lemma}
\label{lemma:vTau-to-vt}
Let $V_1 \succeq V_2 \succeq 0$ be matrices in $\RR[d \times d]$, then we have 
\begin{align*}
    \obs\tran V_1 \obs 
    \le
    (\obs\tran V_2 \obs) \det(V_1) / \det(V_2)
    \qquad
    ,
    \forall \obs \in \RR[d]
    .
\end{align*}
\end{lemma}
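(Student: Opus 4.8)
The plan is to reduce the inequality to a one-matrix eigenvalue statement by a simultaneous congruence. Assume $V_2 \succ 0$ (if $V_2$ is singular then $\det V_2 = 0$ and, under the natural convention, the right-hand side is $+\infty$, so there is nothing to prove). Put $W = V_2^{-1/2} V_1 V_2^{-1/2}$ and $z = V_2^{1/2}\obs$. Then $\obs\tran V_1 \obs = z\tran W z$, $\obs\tran V_2 \obs = \norm{z}^2$, and $\det W = \det(V_1)/\det(V_2)$ by multiplicativity of the determinant; moreover $V_1 \succeq V_2$ is equivalent to $W \succeq I$, i.e.\ every eigenvalue of $W$ is at least $1$. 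The claim thus becomes: for every symmetric $W \succeq I$ and every vector $z$, $z\tran W z \le (\det W)\,\norm{z}^2$.

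To prove this, I would first use the Rayleigh quotient bound $z\tran W z \le \lambda_{\max}(W)\,\norm{z}^2$, which leaves the scalar inequality $\lambda_{\max}(W) \le \det W$. Writing $\det W = \prod_{i=1}^d \lambda_i(W)$ and factoring out one copy of the largest eigenvalue, the remaining $d-1$ factors are each $\ge 1$ because $W \succeq I$, so $\det W \ge \lambda_{\max}(W)$. Chaining the two bounds gives $z\tran W z \le (\det W)\,\norm{z}^2$, and undoing the substitution $z = V_2^{1/2}\obs$ recovers the stated inequality for all $\obs$.

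I do not expect a genuine obstacle here — the statement is purely linear-algebraic. The only points needing a little care are the invertibility of $V_2$ (handled above), and making explicit that the step $\lambda_{\max}(W) \le \det W$ truly uses $W \succeq I$: it is precisely the hypothesis $V_1 \succeq V_2$, transported through the congruence by $V_2^{-1/2}$, that forces all eigenvalues of $W$ to lie above $1$ and hence makes their product dominate any single one.
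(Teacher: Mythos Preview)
Your argument is correct. The congruence $W = V_2^{-1/2} V_1 V_2^{-1/2}$ reduces the claim to $z\tran W z \le (\det W)\norm{z}^2$ for $W \succeq I$, and the chain $z\tran W z \le \lambda_{\max}(W)\norm{z}^2 \le (\det W)\norm{z}^2$ is valid precisely because all eigenvalues of $W$ are at least $1$.

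As for comparison: the paper does not prove this lemma at all --- it is quoted verbatim as Lemma~27 of \cite{cohen2019learning} and used as a black box. Your proof is therefore a self-contained addition rather than an alternative route. One minor remark: your handling of the singular case $\det V_2 = 0$ is slightly loose, since if $\obs$ lies in the kernel of $V_2$ the right-hand side is the indeterminate form $0 \cdot \infty$ rather than $+\infty$. This does not matter here, as every application in the paper has $V_2 \succeq \lambda I \succ 0$, but strictly speaking the lemma should be stated (or understood) for $V_2 \succ 0$.
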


Next, the following is a standard bound on a harmonic sum.
\begin{lemma*}[restatement of \cref{lemma:harmonicBound}]
    Let $\at \in \RR[\din]$ be a sequence such that $\norm{\at}^2 \le \lambda$, and define $V_t = \lambda I + \sum_{s=1}^{t-1} \at[s] \at[s]\tran$. Then
    $
        \sum_{t=1}^{T} \at\tran V_t^{-1} \at
        \le
        5 \din \log T
        .
    $
\end{lemma*}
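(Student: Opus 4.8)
The plan is to run the standard ``elliptical potential'' argument based on the matrix determinant lemma, exactly as in the cited references. The two elementary ingredients I would invoke are: (i) since $V_t \succeq \lambda I$ and $\norm{\at}^2 \le \lambda$, each term satisfies $\at\tran V_t^{-1} \at \le \norm{\at}^2 / \lambda \le 1$; and (ii) the scalar inequality $x \le 2\log(1+x)$, valid for $x \in [0,1]$ (the difference vanishes at $0$ and has nonnegative derivative $2/(1+x)-1$ there). Combining (i) and (ii) termwise gives $\sum_{t=1}^{T} \at\tran V_t^{-1}\at \le 2\sum_{t=1}^{T} \log\brk{1 + \at\tran V_t^{-1}\at}$.

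Next I would telescope the right-hand side. By the matrix determinant lemma, $\det(V_{t+1}) = \det(V_t + \at\at\tran) = \det(V_t)\brk{1 + \at\tran V_t^{-1}\at}$, so $\log\brk{1 + \at\tran V_t^{-1}\at} = \log\det(V_{t+1}) - \log\det(V_t)$, and the sum collapses to $\log\brk{\det(V_{T+1})/\det(V_1)}$. Since $V_1 = \lambda I$ we have $\det(V_1) = \lambda^{\din}$, and using $\det(A) \le \norm{A}^{\din}$ together with $\norm{V_{T+1}} \le \lambda + \sum_{t=1}^{T}\norm{\at}^2 \le \lambda(T+1)$ gives $\det(V_{T+1}) \le \lambda^{\din}(T+1)^{\din}$. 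Hence $\sum_{t=1}^{T} \at\tran V_t^{-1}\at \le 2\din\log(T+1)$.

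Finally I would conclude with the numeric comparison $2\log(T+1) \le 5\log T$, i.e. $(T+1)^2 \le T^5$, which holds for all $T \ge 2$; this yields the claimed $\sum_{t=1}^{T} \at\tran V_t^{-1}\at \le 5\din\log T$.

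There is no real obstacle here: the argument is entirely routine. The only two points that merit a moment's care are verifying the boundedness $\at\tran V_t^{-1}\at \le 1$ (which is what licenses applying the logarithmic inequality) and checking the final elementary comparison between $2\log(T+1)$ and $5\log T$ for the relevant range of $T$; both are immediate.
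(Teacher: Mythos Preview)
Your proposal is correct and follows essentially the same elliptical-potential argument as the paper: bound each term by $1$, convert to a log via $x \le 2\log(1+x)$, telescope using the matrix determinant lemma, and finish with $\det(V_{T+1}) \le \lambda^{\din}(T+1)^{\din}$. The paper cites Lemma~26 of \cite{cohen2019learning} for the key inequality and ends with a constant $4$ rather than your $2$ (requiring $T\ge4$ instead of $T\ge2$ for the last step), but the structure is identical.
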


\begin{proof}%
Notice that $\at\tran V_t^{-1} \at \le \norm{\at}^2 / \lambda^2 \le 1$, and so by Lemma 26 in \cite{cohen2019learning} we get that $\at V_t^{-1} \at \le \log \brk*{\det(V_{t+1}) / \det(V_t)}$. We conclude that
\begin{align*}
    \sum_{t=1}^{T} \at\tran V^{-1} \at
    &\le
    2 \sum_{t=1}^{T} \at\tran V_t^{-1} \at
    \\
    &
    \le
    4 \sum_{t=1}^{T} \log \brk*{\det(V_{t+1}) / \det(V_t)}
    \\
    \tag{telescoping sum}
    &
    =
    4 \log \brk*{\det(V_{T+1}) / \det(V)}
    \\
    &
    =
    4 \log \det(V^{-1/2} V_{T+1} V^{-1/2})
    \\
    \tag{$\det(A) \le \norm{A}^d$}
    &
    \le
    4 \din \log \norm{V^{-1/2} V_{T+1} V^{-1/2}}
    \\
    \tag{triangle inequality}
    &
    \le
    4 \din \log \brk*{1 + \frac{1}{\lambda^2}\sum_{s=1}^{T} \norm{\at[s]}^2}
    \\
    &
    \le
    4 \din \log (T+1)
    \\
    \tag{$T \ge 4$}
    &
    \le
    5 \din \log T
    .
\end{align*}
\end{proof}

\subsection{Concentration of Measure}

First, we give the following Bernstein type tail bound \cite[see e.g.,][Lemma D.4]{rosenberg2020near}.
\begin{lemma}
\label{lemma:multiplicative-concentration}
Let $\brk[c]{X_t}_{t \ge 1}$
be a sequence of random variables with expectation adapted to a filtration
$\mathcal{F}_t$.
Suppose that $0 \le X_t \le 1$ almost surely. Then with probability at least $1-\delta$
\begin{align*}
    \sum_{t=1}^{T} \EE \brk[s]{X_t \mid \mathcal{F}_{t-1}}
    \le
    2 \sum_{t=1}^{T} X_t
    +
    4 \log \frac{2}{\delta}
\end{align*}
\end{lemma}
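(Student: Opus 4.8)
The plan is to use the standard exponential--supermartingale (Chernoff) method, adapted to the filtered setting, using only the boundedness $X_t\in[0,1]$ and no variance information. Since the target inequality is a \emph{lower}-tail statement on $\sum_t X_t$ relative to its predictable compensator $\sum_t \EE\brk[s]{X_t \mid \mathcal{F}_{t-1}}$, the natural object to control is the conditional moment generating function of $-X_t$ rather than of $X_t$.

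Concretely, I would first fix $\lambda=\ln 2>0$ and use convexity of $x\mapsto e^{-\lambda x}$ on $[0,1]$ to get the chord bound $e^{-\lambda x}\le 1-(1-e^{-\lambda})x$ for all $x\in[0,1]$. Taking conditional expectation, using $\EE\brk[s]{X_t\mid\mathcal{F}_{t-1}}\ge 0$ and $1-y\le e^{-y}$, this gives
\[
    \EE\brk[s]{e^{-\lambda X_t}\mid \mathcal{F}_{t-1}}
    \le
    \exp\brk*{-(1-e^{-\lambda})\,\EE\brk[s]{X_t\mid\mathcal{F}_{t-1}}}.
\]
I would then define
\[
    M_t=\exp\brk*{-\lambda\sum_{s=1}^{t}X_s+(1-e^{-\lambda})\sum_{s=1}^{t}\EE\brk[s]{X_s\mid\mathcal{F}_{s-1}}},
    \qquad M_0=1,
\]
and check, via the displayed bound, that $\EE\brk[s]{M_t\mid\mathcal{F}_{t-1}}\le M_{t-1}$, so $(M_t)$ is a nonnegative supermartingale with $\EE M_0=1$. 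Applying Ville's maximal inequality (which additionally yields the ``simultaneously for all $t\le T$'' form used in \cref{lemma:blockConcentration}; plain Markov on $M_T$ suffices for a fixed horizon) gives $\PP{M_T\ge 1/\delta}\le\delta$. On the complementary event, taking logarithms and rearranging produces
\[
    \sum_{s=1}^{T}\EE\brk[s]{X_s\mid\mathcal{F}_{s-1}}
    \le
    \frac{\lambda}{1-e^{-\lambda}}\sum_{s=1}^{T}X_s
    +\frac{1}{1-e^{-\lambda}}\log\frac{1}{\delta}.
\]
Substituting $\lambda=\ln 2$ yields $1-e^{-\lambda}=\tfrac12$ and $\lambda/(1-e^{-\lambda})=2\ln 2\le 2$, so the right-hand side is at most $2\sum_s X_s+2\log(1/\delta)\le 2\sum_s X_s+4\log(2/\delta)$, which is the claim.

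The only real subtlety — there is no genuine obstacle here — is the bookkeeping in the adapted setting: the $X_t$ need not be independent, the compensator $\EE\brk[s]{X_t\mid\mathcal{F}_{t-1}}$ is itself a random, $\mathcal{F}_{t-1}$-measurable quantity, and one must be careful that it is exactly this quantity that appears in $M_t$ so that the supermartingale property is exact. The choice $\lambda=\ln 2$ is the natural sweet spot, as it simultaneously pins the multiplicative constant to $\le 2$ and the additive term to a clean $2\log(1/\delta)$, which is then loosened to the stated $4\log(2/\delta)$; any $\lambda\in(0,\ln2]$ would also work with slightly different constants.
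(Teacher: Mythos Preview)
Your proof is correct. The paper does not actually prove this lemma but merely cites it as a known Bernstein-type tail bound \cite[Lemma D.4]{rosenberg2020near}; your exponential-supermartingale argument with the chord bound on $e^{-\lambda x}$ and the choice $\lambda=\ln 2$ is precisely the standard derivation of such multiplicative Chernoff inequalities in the adapted setting, and in fact your intermediate bound $2\sum_t X_t + 2\log(1/\delta)$ is slightly sharper than the stated constants.
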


\begin{lemma*}[restatement of \cref{lemma:blockConcentration}]
Let $X_t$ be a sequence of random variables adapted to a filtration $\mathcal{F}_t$. If $0 \le X_t \le 1$ then with probability at least $1 - \delta$ simultaneously for all $1 \le t \le T$
\begin{align*}
    \sum_{s=1}^{t} \EE\brk[s]{X_t \mid \mathcal{F}_{t-2H}}
    \le
    2 \sum_{s=1}^{t} (X_s)
    +
    8 H \log \frac{2 T^2}{\delta}
    .
\end{align*}
\end{lemma*}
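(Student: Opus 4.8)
The plan is to reduce the statement to the single-step multiplicative Bernstein bound of \cref{lemma:multiplicative-concentration} by splitting the horizon $\{1,\dots,T\}$ into $2H$ arithmetic progressions of common difference $2H$, indexed by residue modulo $2H$. First I would fix a residue $r\in\{1,\dots,2H\}$ and enumerate the indices $s\le T$ with $s\equiv r\pmod{2H}$ as $s_1<s_2<\dots<s_{m_r}$, so that $s_k=s_{k-1}+2H$ for every $k\ge2$. Setting $Z^{(r)}_k:=X_{s_k}$ and $\mathcal{H}^{(r)}_k:=\mathcal{F}_{s_k}$, the sequence $(\mathcal{H}^{(r)}_k)_k$ is a filtration (since the $s_k$ increase), each $Z^{(r)}_k$ is $\mathcal{H}^{(r)}_k$-measurable and lies in $[0,1]$, and, crucially, $\mathcal{H}^{(r)}_{k-1}=\mathcal{F}_{s_{k-1}}=\mathcal{F}_{s_k-2H}$; hence
\[
    \EE\brk[s]{Z^{(r)}_k \mid \mathcal{H}^{(r)}_{k-1}} = \EE\brk[s]{X_{s_k} \mid \mathcal{F}_{s_k-2H}}.
\]
In other words, restricting to one progression collapses the $2H$-step-lagged conditioning into ordinary one-step-predictable conditioning against a bona fide sub-filtration, which is exactly the form \cref{lemma:multiplicative-concentration} requires.

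Next I would apply \cref{lemma:multiplicative-concentration} to the process $(Z^{(r)}_k)_k$ for every residue $r$ and every horizon $\ell\le m_r$, each time with confidence parameter $\delta/T$. Since the number of such pairs $(r,\ell)$ equals $\sum_{r=1}^{2H}m_r=T$, a union bound gives that, with probability at least $1-\delta$, simultaneously for all $r$ and all $\ell\le m_r$,
\[
    \sum_{k=1}^{\ell}\EE\brk[s]{X_{s_k}\mid \mathcal{F}_{s_k-2H}}
    \le
    2\sum_{k=1}^{\ell}X_{s_k} + 4\log\frac{2T}{\delta}.
\]
Finally, fixing any $1\le t\le T$ and grouping $\sum_{s=1}^{t}\EE[X_s\mid\mathcal{F}_{s-2H}]$ by residue class — within residue $r$ the indices $s\le t$ form a prefix $s_1,\dots,s_{\ell_r}$ of that progression, so the previous display applies with $\ell=\ell_r$ — I would obtain
\[
    \sum_{s=1}^{t}\EE\brk[s]{X_s\mid\mathcal{F}_{s-2H}}
    \le
    \sum_{r=1}^{2H}\brk*{2\sum_{k=1}^{\ell_r}X_{s_k} + 4\log\frac{2T}{\delta}}
    =
    2\sum_{s=1}^{t}X_s + 8H\log\frac{2T}{\delta}
    \le
    2\sum_{s=1}^{t}X_s + 8H\log\frac{2T^2}{\delta},
\]
which is the claimed bound.

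The only genuinely delicate point is the filtration bookkeeping in the first step: one must verify that passing to a residue class really does turn the $2H$-lagged conditional expectations into a one-step-predictable process relative to an honest filtration, so that \cref{lemma:multiplicative-concentration} applies verbatim. This is precisely what motivates the decomposition into $2H$ progressions, and once it is in place the remainder is a routine union bound and regrouping, so I do not anticipate any further obstacle.
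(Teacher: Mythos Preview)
Your proposal is correct and follows essentially the same approach as the paper: split $\{1,\dots,T\}$ into $2H$ arithmetic progressions so that the $2H$-step lag becomes a one-step lag along each progression, apply \cref{lemma:multiplicative-concentration} per progression, and union bound. The only cosmetic difference is that you union bound directly over all $(r,\ell)$ prefix pairs (of which there are exactly $T$), whereas the paper first union bounds over the $2H$ residues with $\delta/2H$ and then over horizons $t\le T$ with $\delta/T$; your organization is slightly cleaner and in fact yields the marginally better additive term $8H\log(2T/\delta)$ before you relax it to match the stated $8H\log(2T^2/\delta)$.
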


\begin{proof}
For $h = 1, \ldots, 2H$, and $k \ge 0$ define the time indices
\begin{align*}
    t_{k}^{(h)}
    =
    h + 2H k
    =
    t_{k-1}^{(h)} + 2H
    ,
\end{align*}
and the filtration 
$
    \bar{\mathcal{F}}_k^{(h)} 
    =
    \mathcal{F}_{t_{k}^{(h)}}
    .
$
Denoting $X_k^{(h)} = X_{t_k^{(h)}}$ we have that $X_k^{(h)}$ is $\bar{\mathcal{F}}_k^{(h)}$ measurable, and thus $X_k^{(h)}$ satisfies \cref{lemma:multiplicative-concentration}, which we invoke with $\delta / 2H$ for all  $h = 1, \ldots, 2H$. Taking a union bound, we get that with probability at least $1 - \delta$ for all $h = 1, \ldots, 2H$
\begin{align*}
    \sum_{k=1}^{K(h)} 
    \EE\brk[s]{X_k^{(h)} \mid \bar{\mathcal{F}}_{k-1}^{(h)}}
    \le
    2 \sum_{k=1}^{K(h)} (X_k^{(h)})
    +
    4 \log \frac{4H}{\delta}
    \le
    2 \sum_{k=1}^{K(h)} (X_k^{(h)})
    +
    4 \log \frac{2T}{\delta}
    \tag{$2H \le T$}
    ,
\end{align*}
and thus
\begin{align*}
    \sum_{t=1}^{T} \EE\brk[s]{X_t \mid \mathcal{F}_{t-2H}}
    &
    =
    \sum_{h=1}^{2H}
    \sum_{k=1}^{K(h)}
    \EE\brk[s]{X_k^{(h)} \mid \bar{\mathcal{F}}_{k-1}^{(h)}}
    \\
    &
    \le
    \sum_{h=1}^{2H}
    \brk[s]*{
    2 \sum_{k=1}^{K(h)} \brk{X_k^{(h)}}
    +
    4 \log \frac{2T}{\delta}
    }
    \\
    &
    \le
    2 \sum_{t=1}^{T} (X_t)
    +
    8 H \log \frac{2T}{\delta}
    .
\end{align*}
Replacing $T$ with $t$ and $\delta$ with $\delta / T$, and taking a union bound over all $1 \le t \le T$ concludes the proof.
\end{proof}

\begin{lemma}[restatement of \cref{lemma:ocoUniformConvergence}]
\label{lemma:baseUC}
    Let $R > 0$ and suppose that 
    $
    T
    \ge
    64 R^2
    .
    $
    Then for any $\delta \in (0, 1)$ we have that with probability at least $1 - \delta$
    \begin{align*}
        \abs*{
        \sum_{t = 1}^{T}
        \lt(\qq)
        -
        \LL(\qq)
        }
        \le
        \ocoCostVar \sqrt{T \dout \log\brk*{\frac{3}{\delta}\max\brk[c]{1, \ocoCostVar^{-1} T}}}
        ,
        \quad
        \forall \qq \in \RR[\dout]
        \;
        \text{s.t.}
        \;
        \norm{\qq} \le R.
    \end{align*}
    If additionally $\ocoCostVar \ge 1$ then the $\log$ term may be bounded by $\log (3T / \delta)$.
\end{lemma}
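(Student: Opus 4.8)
The plan is a standard covering‑number argument. First I would fix a single $\qq$ with $\norm{\qq}\le R$ and control $\sum_{t=1}^T(\lt(\qq)-\LL(\qq))$ pointwise: the summands $\lt[](\qq;\ocozz[t])-\LL(\qq)$ are i.i.d.\ and mean zero, and the bounded‑cost assumption $\abs{\lt[](\qq;\ocozz)-\lt[](\qq;\ocozz')}\le\ocoCostVar$ confines $\lt[](\qq;\ocozz[t])$ to an interval of width at most $\ocoCostVar$, so these centered summands are $(\ocoCostVar/2)$‑sub‑Gaussian. Hoeffding's inequality then yields $\abs{\sum_{t=1}^T(\lt(\qq)-\LL(\qq))}\le\ocoCostVar\sqrt{(T/2)\log(2/\delta')}$ with probability at least $1-\delta'$.

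Next I would discretize the ball. I would fix a parameter $\epsilon\in(0,R]$, take a minimal $\epsilon$‑net $\mathcal N$ of $\brk[c]{\qq:\norm{\qq}\le R}$ with $\abs{\mathcal N}\le(3R/\epsilon)^{\dout}$, apply the pointwise bound with $\delta'=\delta/\abs{\mathcal N}$, and union bound so that it holds at every net point simultaneously with probability at least $1-\delta$. For arbitrary $\qq$ in the ball I would pass to the nearest net point $\qq'$, $\norm{\qq-\qq'}\le\epsilon$, and transfer the estimate using that each $\lt[](\cdot;\ocozz)$ is $1$‑Lipschitz --- hence $\lt$ is, and $\LL$ is too, being an average of $1$‑Lipschitz functions --- which picks up an additive error of $2T\epsilon$ (one $T\epsilon$ from $\lt$, one from $\LL$).

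Finally I would choose $\epsilon$ to be a small polynomial factor times $\ocoCostVar/T$, so that the discretization error $2T\epsilon$ is dominated by the main term, which is of scale $\ocoCostVar\sqrt T$, while $\log(3R/\epsilon)$ stays $O(\log(T/\ocoCostVar))$; here the hypothesis $T\ge64R^2$ (equivalently $R\le\sqrt T/8$) is precisely what removes the $R$‑dependence from $\log(3R/\epsilon)$ and hence from the final bound, matching the $R$‑free statement. Collecting the $\log(2/\delta)$, $\dout\log(3R/\epsilon)$, and $2T\epsilon$ contributions, and using $\dout\ge1$ to write $\log(2\abs{\mathcal N}/\delta)\le\dout\log(6R/(\epsilon\delta))$, everything folds into $\ocoCostVar\sqrt{T\dout\log(\tfrac3\delta\max\{1,\ocoCostVar^{-1}T\})}$; the $\ocoCostVar\ge1$ refinement is then immediate since $\max\{1,\ocoCostVar^{-1}T\}\le T$. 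The work here is bookkeeping rather than ideas: the one fussy point is getting the leading constant to come out exactly $1$ rather than merely $O(1)$ --- this pins down how small $\epsilon$ must be and how the $\sqrt{1/2}$ from Hoeffding and the logarithmic terms are combined --- and one also has to dispose of the degenerate case $\epsilon>R$, where the ``net'' is the single point $0$ and the claim is trivial.
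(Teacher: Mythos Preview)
Your proposal is correct and follows essentially the same approach as the paper: Hoeffding at each net point, union bound over a $(3R/\epsilon)^{\dout}$-sized $\epsilon$-cover, Lipschitz transfer costing $2T\epsilon$, and a choice of $\epsilon$ that balances the terms. The paper takes $\epsilon=\min\{R,\tfrac18\ocoCostVar T^{-1/2}\}$ (so $2T\epsilon\le\tfrac14\ocoCostVar\sqrt T$ and $R/\epsilon\le\max\{1,T/\ocoCostVar\}$ via $8R\le\sqrt T$), which is the scale you need rather than the $\ocoCostVar/T$ you wrote --- but your own reasoning (``$2T\epsilon$ dominated by $\ocoCostVar\sqrt T$'') points to $\epsilon\sim\ocoCostVar/\sqrt T$, so this is just a slip; the $\min$ with $R$ is exactly how the paper handles the degenerate case you flagged.
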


\begin{proof}%
    Let 
    $
    \epsilon
    =
    \min\brk[c]{
    R
    ,
    \frac18 \ocoCostVar T^{-1/2}
    }
    ,
    $
    and $\mathcal{Q}_{\epsilon}$ be an $\epsilon-$cover of $B_{\dout}(R)$, the Euclidean norm ball in $\RR[\dout]$. It is well known that
    $
        \abs{\mathcal{Q}_\epsilon}
        \le
        (3 R / \epsilon)^{\dout}
    $
    \citep[see, e.g.,][Lemma 5.13]{van2014probability}.
    We can thus apply Hoeffding's inequality together with a union bound over $\qq \in \mathcal{Q}_\epsilon$ to get that with probability at least $1 - \delta$
    \begin{align*}
        \abs*{
        \sum_{t = 1}^{T}
        \lt(\qq) - \LL(\qq)
        }
        \le
        \ocoCostVar \sqrt{\frac12 T \dout \log \frac{3 R}{\epsilon \delta}}
        ,
        \quad
        \forall \qq \in \mathcal{Q}_\epsilon
        .
    \end{align*}
    Finally, assuming the above event holds, let $\qq \in B_{\dout}(R)$ be arbitrary and $\qq' \in \mathcal{Q}_\epsilon$ be its nearest point in the cover. Then, using the Lipschitz property of $\LL$ we conclude that
    \begin{align*}
        \abs*{
        \sum_{t = 1}^{T}
        \lt(\qq)
        -
        \LL(\qq)
        }
        &
        \le
        \abs*{
        \sum_{t = 1}^{T}
        \lt(\qq)
        -
        \lt(\qq')
        }
        +
        \abs*{
        \sum_{t = 1}^{T}
        \lt(\qq')
        -
        \LL(q')
        }
        +
        \abs*{
        \sum_{t = 1}^{T}
        \LL(\qq')
        -
        \LL(\qq)
        }
        \\
        &
        \le
        \ocoCostVar \sqrt{\frac12 T \dout \log \frac{3 R}{\epsilon \delta}}
        +
        2 T \epsilon
        \\
        &
        \le
        \ocoCostVar \sqrt{T \dout \log \frac{3 R}{\epsilon \delta}}
        \\
        &
        \le
        \ocoCostVar \sqrt{T \dout \log \brk*{\frac{3}{\delta}\max\brk[c]{1, \ocoCostVar^{-1} T}}}
        ,
    \end{align*}
    where the last two transitions used our choice of $\epsilon, T$ together with
    \begin{align*}
        \frac{R}{\epsilon} 
        =
        \max\brk[c]*{1, \frac{8 R \sqrt{T}}{\ocoCostVar}}
        \le
        \max\brk[c]*{1, \frac{T}{\ocoCostVar}}
        . 
        \quad
        &&&
        \qedhere
    \end{align*}
\end{proof}

\begin{lemma*}[restatement of \cref{lemma:blockUC}]
Let $f_t : \RR[d] \to \RR$ be a sequence of identically distributed functions such that $f_t$ and $f_{t+2H}$ are interdependently distributed. Let $F(M) = \EE f_t(M)$, $R > 0$ and suppose that $T \ge {64 R^2}$ where $C > 1$ is such that
\begin{align*}
    \abs{f_t(M) - f_{t+2H}(M)}
    \le
    C
    \;\;
    ,
    \forall \norm{M} \le R
    .
\end{align*}
The with probability at least $1-\delta$
\begin{align*}
    \abs*{\sum_{t=1}^{T} (f_t(M) - F(M))}
    \le
    C \sqrt{2 T H d \log \frac{3 T^2}{\delta}}
\end{align*}
\end{lemma*}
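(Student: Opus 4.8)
The plan is to split the horizon $[T]$ into $2H$ interleaved blocks on which the summands are i.i.d., prove a uniform-over-$M$ concentration bound on each block by a covering argument, and then recombine the blocks via Cauchy--Schwarz. This layers the block decomposition used in the proof of \cref{lemma:blockConcentration} onto the $\epsilon$-net argument used in the proof of \cref{lemma:baseUC}.

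Concretely, for $h = 1, \ldots, 2H$ let $\mathcal{T}_h = \brk[c]{t \le T : t \equiv h \pmod{2H}}$ and $n_h = \abs{\mathcal{T}_h}$, so that $\sum_{h=1}^{2H} n_h = T$ and, since consecutive indices in $\mathcal{T}_h$ differ by exactly $2H$, the functions $\brk[c]{f_t : t \in \mathcal{T}_h}$ are i.i.d.\ (in the control application this holds because $f_t$ depends only on the cost and noise randomness over the window $[t-2H,t-1]$ and at time $t$, which is disjoint across $\mathcal{T}_h$). Fix a scale $\epsilon$ of order $C/\sqrt{T}$ (divided by the Lipschitz modulus of $f_t$, which in the application is polynomially bounded via \cref{lemma:technicalParameters,lemma:fbarProperties}), and let $\mathcal{M}_\epsilon$ be an $\epsilon$-net of $\brk[c]{M : \norm{M} \le R}$ with $\abs{\mathcal{M}_\epsilon} \le (3R/\epsilon)^d$ \citep[Lemma 5.13]{van2014probability}; the assumptions $T \ge 64R^2$ and $C > 1$ keep $R/\epsilon$ at most polynomial in $T$, so $\log\abs{\mathcal{M}_\epsilon} \le d\log(3T)$. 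For each block $h$ and net point $M' \in \mathcal{M}_\epsilon$, the quantity $\sum_{t \in \mathcal{T}_h}\brk{f_t(M') - F(M')}$ is a sum of $n_h$ independent zero-mean terms whose pairwise differences are bounded by $C$, so Hoeffding's inequality together with a union bound over the $2H\abs{\mathcal{M}_\epsilon}$ pairs $(h,M')$ (with total failure probability $\delta$) gives that with probability at least $1-\delta$, simultaneously over all $h$ and all $M' \in \mathcal{M}_\epsilon$,
\begin{align*}
    \abs*{\sum_{t \in \mathcal{T}_h}\brk{f_t(M') - F(M')}}
    \le
    C \sqrt{\tfrac12 n_h \log\tfrac{4H\abs{\mathcal{M}_\epsilon}}{\delta}}
    .
\end{align*}
Passing from net points to an arbitrary $M$ with $\norm{M} \le R$ adds an $\Oof{C\sqrt{T}}$ discretization term by Lipschitzness of $f_t$ and $F$ and the choice of $\epsilon$ (exactly as in the proof of \cref{lemma:baseUC}), which is of lower order.

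It remains to recombine the blocks. Using $\sum_{h=1}^{2H}\sqrt{n_h} \le \sqrt{2H\sum_{h}n_h} = \sqrt{2HT}$ by Cauchy--Schwarz,
\begin{align*}
    \abs*{\sum_{t=1}^{T}\brk{f_t(M) - F(M)}}
    \le
    \Oof{C\sqrt{T}}
    +
    \sum_{h=1}^{2H} C\sqrt{\tfrac12 n_h \log\tfrac{4H\abs{\mathcal{M}_\epsilon}}{\delta}}
    \le
    \Oof{C\sqrt{T}}
    +
    C\sqrt{HT\log\tfrac{4H\abs{\mathcal{M}_\epsilon}}{\delta}}
    ,
\end{align*}
and since $2H \le T$, while $C > 1$ and $R/\epsilon \le T$ imply $\log\tfrac{4H\abs{\mathcal{M}_\epsilon}}{\delta} \le 2d\log\tfrac{3T^2}{\delta}$, the right-hand side is at most $C\sqrt{2THd\log\tfrac{3T^2}{\delta}}$ after absorbing the lower-order term, as claimed. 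The main obstacle is the union-bound bookkeeping: one must choose $\epsilon$ so that the Lipschitz modulus of $f_t$ cancels in the discretization error, so that $\abs{\mathcal{M}_\epsilon}$ stays $T^{O(d)}$, and so that the resulting logarithmic factor collapses to exactly $d\log(3T^2/\delta)$; the one conceptual point, as opposed to bookkeeping, is justifying that the within-block subsequences are genuinely independent rather than merely pairwise independent at lag $2H$, which is what makes the block decomposition legitimate.
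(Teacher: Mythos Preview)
Your proposal is correct and follows essentially the same route as the paper: split $[T]$ into $2H$ residue classes on which the summands are i.i.d., apply uniform concentration within each class, and recombine via Cauchy--Schwarz $\sum_h \sqrt{n_h} \le \sqrt{2HT}$. The only difference is packaging: the paper invokes \cref{lemma:baseUC} as a black box on each class (with failure probability $\delta/2H \ge \delta/T$, which is where the $T^2$ inside the log comes from), whereas you unfold the covering argument inline. You also correctly flag the two loose points in the statement---that the covering step needs a Lipschitz modulus for $f_t$ in $M$ (present in the application via \cref{lemma:fbarProperties} but not in the abstract hypothesis), and that the pairwise lag-$2H$ independence stated is weaker than the full block independence actually used---both of which the paper's proof also relies on implicitly.
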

\begin{proof}
For $h = 1, \ldots, 2H$, and $k \ge 0$ define the time indices
\begin{align*}
    t_{k}^{(h)}
    =
    h + 2H k
    =
    t_{k-1}^{(h)} + 2H
    .
\end{align*}
Denoting $f_k^{(h)} = f_{t_k^{(h)}}$ we have that $f_k^{(h)}$ are i.i.d.
We can thus invoke \cref{lemma:ocoUniformConvergence} with a union bound over all $h = 1, \ldots, 2H \le T$
to get that with probability at least $1 - \delta$
\begin{align*}
    \sum_{k=1}^{K(h)} 
    \brk*{f_k^{(h)}(M) - F(M)}
    \le
    C \sqrt{K(h) d \log \frac{3 T^2}{C \delta}}
    ,
    \;\;
    \forall M \text{ s.t. } \norm{M} \le R \text{ and } h = 1, \ldots, 2H
    .
\end{align*}
where we denoted $K(h) = \floor{(T-h)/2H}$. Now, notice that \begin{align*}
\brk[c]{t_k : k = 1, \ldots, K(h), h = 1, \ldots, 2H} 
=
\brk[c]{1, \ldots, T}
.
\end{align*}
We conclude that
\begin{align*}
    \sum_{k=1}^{K(h)} 
    \brk*{f_k^{(h)}(M) - F(M)}
    \le
    C \sqrt{K(h) d \log \frac{3 T^2}{C \delta}}
    ,
    \;\;
    \forall M \text{ s.t. } \norm{M} \le R \text{ and } h = 1, \ldots, 2H
    .
\end{align*}
\begin{align*}
    \abs*{
    \sum_{t=1}^{T} \brk*{f_t(M) - F(M)}
    }
    &
    =
    \abs*{
    \sum_{h=1}^{2H}
    \sum_{k=1}^{K(h)}
    \brk*{f_k^{(h)}(M) - F(M)}
    }
    \\
    &
    \le
    \sum_{h=1}^{2H}
    C \sqrt{K(h) d \log \frac{3 T^2}{C \delta}}
    \\
    &
    \le
    C \sqrt{2H \sum_{h=1}^{2H}K(h) d \log \frac{3 T^2}{C \delta}}
    \\
    &
    =
    C \sqrt{2 T H d \log \frac{3 T^2}{C \delta}}
    ,
\end{align*}
for all $M$ such that $\norm{M} \le R$.
\end{proof}

\subsection{Least Squares Estimation}

Our algorithms use regularized least squares methods in order to estimate the system parameters.
An analysis of this method for a general, possibly-correlated sample, was introduced in the context of linear bandit optimization~\citep{abbasi2011improved}, and was first used in the context of LQRs by~\citet{abbasi2011regret}.

Let $\model_\star \in \RR[d \times m]$, $\seqDef{y_{t+1}}{t=1}{\infty} \in \RR[d]$, $\seqDef{z_t}{t=1}{\infty} \in \RR[m]$, $\seqDef{w_t}{t=1}{\infty} \in \RR[d]$, such that
$y_{t+1} = \model_\star z_t + w_t$, and $\seqDef{w_t}{t=1}{\infty}$ are i.i.d.\ and satisfy $\norm{w_t}_\infty \le \maxNoise$. 
Moreover, there exists a filtration $\brk[c]{\mathcal{F}_t}_{t \ge 1}$ such that $y_t, z_t$ are $\mathcal{F}_{t-1}$ measurable, and $w_t$ is $\mathcal{F}_t$ measurable and satisfies
$
\EE \brk[s]{w_t \; | \; \mathcal{F}_{t-1}} = 0
.
$
Denote by
\begin{align}
\label{eq:olsDef}
    \hat{\model}_t
    \in
    \argmin_{\model \in \RR[d \times m]} \left\{ \sum_{s=1}^{t-1} \norm{y_{t+1} - \model z_t}^2 + \lambda \norm{\model}_F^2 \right\}
    ,
\end{align}
the regularized least squares estimate of $\model_\star$ with regularization parameter $\lambda$.
The following lemma is due to \citealp{abbasi2011regret}.

\begin{lemma} 
\label{lemma:LSE}
Let 
$
V_t 
=
\lambda I + \sum_{s=1}^{t-1}z_t z_t\tran
$
and
$
\Delta_t
=
\model_\star - \hat{\model}_t
,
$
and suppose that $\norm{z_t}^2 \le \lambda$, $T \ge d$.
With probability at least $1 - \delta$, we have for all $1 \le t \le T$
\begin{align*}
    \norm{\Delta_t}_{V_t}^2
    \le
    \tr{\Delta_t\tran V_t \Delta_t}
    \le
    8 \maxNoise^2 d^2 \log \brk*{\frac{ T}{\delta}}
    +
    2\lambda \norm{\model_\star}_F^2
    .
\end{align*}
\end{lemma}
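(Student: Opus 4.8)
The plan is to recover this as a routine consequence of the self-normalized martingale tail bound of \citet{abbasi2011improved}; this is in fact precisely how \citet{abbasi2011regret} establish it, so one legitimate option is simply to cite that reference, but here is the argument behind it. First I would invoke the normal equations for the ridge estimator in \cref{eq:olsDef}: optimality gives $\hat{\model}_t V_t = \sum_{s=1}^{t-1} y_{s+1} z_s\tran$, and substituting $y_{s+1} = \model_\star z_s + w_s$ together with $V_t = \lambda I + \sum_{s=1}^{t-1} z_s z_s\tran$ yields, with $S_t := \sum_{s=1}^{t-1} w_s z_s\tran$,
\begin{align*}
    \Delta_t V_t^{1/2}
    =
    \lambda \model_\star V_t^{-1/2}
    -
    S_t V_t^{-1/2}
    ,
\end{align*}
so that, by the Frobenius-norm triangle inequality and $(a+b)^2 \le 2a^2+2b^2$,
\begin{align*}
    \tr{\Delta_t\tran V_t \Delta_t}
    =
    \norm{\Delta_t V_t^{1/2}}_F^2
    \le
    2\lambda^2 \norm{\model_\star V_t^{-1/2}}_F^2
    +
    2\norm{S_t V_t^{-1/2}}_F^2
    .
\end{align*}

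The first (regularization bias) term is immediate: since $V_t \succeq \lambda I$ we have $\norm{\model_\star V_t^{-1/2}}_F^2 \le \lambda^{-1}\norm{\model_\star}_F^2$, contributing at most $2\lambda\norm{\model_\star}_F^2$, which is exactly the second summand of the claim. For the stochastic term I would decompose $S_t$ row-wise, $\norm{S_t V_t^{-1/2}}_F^2 = \sum_{i=1}^{d}(S_t^{(i)})\tran V_t^{-1} S_t^{(i)}$ with $S_t^{(i)} = \sum_{s=1}^{t-1} w_{s,i}\, z_s$ (writing $w_{s,i}$ for the $i$-th coordinate of $w_s$). For each fixed $i$, the scalars $\brk[c]{w_{s,i}}_s$ form a martingale-difference sequence with respect to $\brk[c]{\mathcal{F}_s}$ (conditional mean zero by assumption, and bounded by $\maxNoise$ hence conditionally $\maxNoise$-sub-Gaussian by Hoeffding's lemma), while $z_s$ is $\mathcal{F}_{s-1}$-measurable; thus the self-normalized bound of \citet{abbasi2011improved} (via the method of mixtures) applies and gives, with probability at least $1-\delta/d$ and simultaneously for all $t \ge 1$, $(S_t^{(i)})\tran V_t^{-1} S_t^{(i)} \le 2\maxNoise^2 \log\!\brk*{d\,\det(V_t)^{1/2}\det(\lambda I)^{-1/2}/\delta}$. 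A union bound over $i \in [d]$, together with $\det(V_t)/\det(\lambda I) \le (1+t/m)^m$ (which follows from $\det(V_t) \le (\tr{V_t}/m)^m$ and $\norm{z_s}^2 \le \lambda$) and absorbing the lower-order $\log(d/\delta)$ contributions using $T \ge d$, collapses this to a term of the form $\Oof{\maxNoise^2 d^2 \log(T/\delta)}$, matching the first summand of the statement. Adding the two contributions on the single union-bound event finishes the proof.

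The only genuinely non-routine point is the stochastic term: one must check that the hypotheses of the self-normalized martingale concentration are met (predictability of $z_s$, and conditional sub-Gaussianity of the noise increments obtained from boundedness), pick the union-bound split over coordinates, and then push the determinant/trace bookkeeping through so that the confidence radius comes out in the clean polylogarithmic $d^2$ form; the regularization term is essentially free and the rest is elementary linear algebra. As noted, since the statement is quoted verbatim from \citet{abbasi2011regret}, citing that reference in place of the above is equally acceptable.
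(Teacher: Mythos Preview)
Your proposal is correct and, in fact, does more than the paper: the paper does not prove this lemma at all but simply attributes it to \citet{abbasi2011regret} (``The following lemma is due to \citealp{abbasi2011regret}''). Your option of citing that reference is exactly what the paper does, and the argument you sketch behind it is the standard one.
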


\paragraph{Disturbance estimation.}

\begin{proof}[of \cref{lemma:disturbanceEstimation}]
    First, since projection is a contraction operator we have that
    \begin{align*}
        \norm{w_t - \hat{w}_t}
        &
        =
        \norm*{
        \Pi_{B_2(\maxNoise)}\brk[s]{w_t}
        -
        \Pi_{B_2(\maxNoise)}\brk[s]{x_{t+1}- (A_t \; B_t) z_t}
        }
        \\
        &
        \le
        \norm*{
        {w_t}
        -
        {x_{t+1} + (A_t \; B_t) z_t}
        }
        \\
        &
        =
        \norm*{
        \brk[s]{(A_t \; B_t) - (\Astar \; \Bstar)} z_t
        }
        \\
        &
        \le
        \norm*{
        {(A_t \; B_t) - (\Astar \; \Bstar)}}_{V_t} \norm{V_t^{-1/2} z_t}
        ,
    \end{align*}
    where strictly for the purpose of this proof we denote $V_t = \pRegW I + \sum_{s=1}^{t-1} z_t z_t\tran$. In all other places we use $V_t$ as it is defined in \cref{alg:lqr}. Now, by \cref{lemma:technicalParameters} we have that
    \begin{align*}
        \norm{(x_t \; u_t)}^2
        =
        \norm{x_t}^2 + \norm{u_t}^2
        \le
        4 \kappa^2 \Bbound^2 \maxNoise^2 \RM^2 H \gamma^{-2}
        +
        \maxNoise^2 \RM^2 H
        \le
        5 \kappa^2 \Bbound^2 \maxNoise^2 \RM^2 H \gamma^{-2}
        =
        \pRegW
    \end{align*}
    We can thus invoke \cref{lemma:harmonicBound} to get that
    \begin{align*}
        \sum_{t=1}^{T} \norm{V_t^{-1/2} z_t}^2 \le 5 (\dx + \du) \log T
        .
    \end{align*}
    Finally, we have that
    \begin{align*}
        \norm{(\Astar \; \Bstar)}_F^2
        &
        =
        \tr{(\Astar \; \Bstar)\tran (\Astar \; \Bstar)}
        \\
        &
        =
        \tr{\Astar\tran \Astar} + \tr{\Bstar\tran \Bstar}
        \le
        \dx \norm{\Astar}^2 + \du \norm{\Bstar}^2
        \le
        \dx \kappa^2 + \du \Bbound^2
        .
    \end{align*}
    We can thus apply \cref{lemma:LSE} to get that with probability at least $1 - \delta$
    \begin{align*}
        \sum_{t=1}^{T}
        \norm{w_t - \hat{w}_t}^2
        &
        \le
        \sum_{t=1}^{T}
        \norm*{
        {(A_t \; B_t) - (\Astar \; \Bstar)}}_{V_t}^2 \norm{V_t^{-1/2} z_t}^2
        \\
        &
        \le
        \brk*{2\pRegW(\dx \kappa^2 + \du \Bbound^2) 
        +
        8 \maxNoise^2 \dx^2\log \frac{T}{\delta}}
        \sum_{t=1}^{T} \norm{V_t^{-1/2} z_t}^2
        \\
        &
        \le
        5(\dx + \du)\brk*{2\pRegW(\dx \kappa^2 + \du \Bbound^2)
        +
        8 \maxNoise^2 \dx^2\log \frac{T}{\delta}} \log T
        \\
        &
        \le
        5(\dx + \du) \maxNoise^2 H \brk*{10 \kappa^2 \RM^2 \Bbound^2 \gamma^{-2} (\dx \kappa^2 + \du \Bbound^2)
        +
        8 \dx^2} \log \frac{T}{\delta}
        \\
        &
        \le
        100 \maxNoise^2 H { \kappa^2 \RM^2 \Bbound^2 \gamma^{-2} (\dx + \du) (\dx^2 \kappa^2 + \du \Bbound^2)} \log \frac{T}{\delta}
        ,
    \end{align*}
    where the second to last inequality also used the fact that $H \ge \log T$.
\end{proof}

\paragraph{Model estimation.}

\begin{proof}[of \cref{lemma:lqrParameterEst}]
Recall that by recursively unrolling the transition model we get that
\begin{align*}
    x_{t+1}
    =
    \model_\star \obs_{t} + {w}_{t} + e_{t}
    ,
\end{align*}
where
$
    e_{t}
    =
    \Astar^{H} x_{t+1-H}
    +
    \sum_{h=1}^{H} \Astar^{h-1} ({w}_{t+1-h} - \hat{w}_{t+1-h})
    .
$
Moreover, letting $O_t, X_t$ be a matrices whose rows are $\obs_1, \ldots, \obs_{t-1}$, and $x_2, \ldots, x_t$ correspondingly. It is well known that the solution to the regularized least squares problem in \cref{alg:lqr} can be written as
$
\model_t
=
V_t^{-1} O_t\tran X_t
,
$
where, as in \cref{alg:lqr}, $V_t = \pRegTheta I + O_t\tran O_t.$
Now, define $\tilde{x}_{t+1} = \model_\star \obs_t + w_t$ and let $\tilde{\model}_t$ be it least squares solution, i.e.,
\begin{align*}
    \tilde{\model}_t
    =
    \argmin_{\model \in \RR[\dx \times H \du + (H-1)\dx]} \left\{ \sum_{s=1}^{t-1} \norm{\tilde{x}_{t+1} - \model \obs_t}^2 + \pRegTheta \norm{\model}_F^2 \right\}
    =
    V_t^{-1} O_t\tran \tilde{X}_t
    ,
\end{align*}
where and $\tilde{X}_t$ is a matrix whose rows are $\tilde{x}_2, \ldots, \tilde{x}_t$.
Notice that this fits the setting of \cref{lemma:LSE} and thus with probability at least $1 - \delta$
\begin{align*}
    \tr{(\tilde{\model}_t - \model_\star)\tran V_t (\tilde{\model}_t - \model_\star)}
    \le
    8 \maxNoise^2 \dx^2 \log \brk*{\frac{T}{\delta}}
    +
    2\pRegTheta \norm{\model_\star}_F^2
    ,
    \;\;
    \forall 1 \le t \le T
    .
\end{align*}
Next, denote $E_t$ the matrix whose rows are $e_1, \ldots, e_{t-1}$ and notice that $X_t = \tilde{X}_t + E_t$. We thus have that
\begin{align*}
    \tr{(\model_t - \tilde{\model}_t)\tran V_t (\model_t - \tilde{\model}_t)}
    &
    =
    \tr{E_t\tran O_t V_t^{-1} V_t V_t^{-1} O_t\tran E_t}
    \\
    &
    =
    \tr{E_t\tran O_t V_t^{-1} O_t\tran E_t}
    \le
    \tr{E_t\tran E_t}
    =
    \sum_{s=1}^{t-1} \norm{e_s}^2
    ,
\end{align*}
where the inequality holds since $V_t = O_t\tran O_t + \pRegTheta I$ and thus $\norm{O_t V_t^{-1} O_t} \le 1$.
Finally, combining this with the known result for the error bound on $\tilde{\model}_t$ concludes the proof.
Combining with the previous inequality we get that with probability at least $1 - \delta$
\begin{align*}
    \tr{(\model_t - \model_\star)\tran V_t (\model_t - \model_\star)}
    &
    \le
    2 \tr{(\tilde{\model}_t - \model_\star)\tran V_t (\tilde{\model}_t - \model_\star)}
    +
    2 \tr{(\model_t - \tilde{\model}_t)\tran V_t (\model_t - \tilde{\model}_t)}
    \\
    &
    \le
    16 \maxNoise^2 \dx^2 \log \brk*{\frac{T}{\delta}}
    +
    4\pRegTheta \norm{\model_\star}_F^2
    +
    2\sum_{s=1}^{t-1} \norm{e_s}^2
    ,
\end{align*}
for all $1 \le t \le T$. This concludes the first part of the proof.

For the second part, recall that we also assume that
$
\pRegTheta
=
2 \maxNoise^2 \RM^2 H^2
,
$
and $\sum_{t=1}^{T} \norm{\hat{w}_t - w_t}^2 \le \wErr^2$ where 
\begin{align*}
    \wErr
    =
    10 \maxNoise \kappa \RM \Bbound \gamma^{-1} \sqrt{H (\dx + \du) (\dx^2 \kappa^2 + \du \Bbound^2) \log \frac{T}{\delta}}
    .
\end{align*}
Since by \cref{lemma:technicalParameters}
$
\norm{x_t}
\le
2 \kappa \Bbound \maxNoise \RM \sqrt{H} / \gamma
,
$
and since
$
    H
    \ge
    \gamma^{-1} \log T
    ,
$
we have that
\begin{align*}
    \tag{triangle inequality}
    \norm{e_{t-1}}
    &
    \le
    \norm{\Astar^H x_{t-H}}
    +
    \sum_{h=1}^{H} \norm{\Astar^{h-1} (w_{t-h} - \hat{w}_{t-h})}
    \\
    \tag{sub-multiplicativity}
    &
    \le
    \norm{\Astar^H} \norm{x_{t-H}}
    +
    \sum_{h=1}^{H} \norm{\Astar^{h-1}} \norm{w_{t-h} - \hat{w}_{t-h}}
    \\
    \tag{Cauchy-Schwarz}
    &
    \le
    2 \kappa^2 \gamma^{-1} \Bbound \maxNoise \RM \sqrt{H} e^{-\gamma H}
    +
    \sqrt{
    \sum_{h=1}^{H} \norm{\Astar^{h-1}}^{2}
    }
    \sqrt{
    \sum_{h=1}^{H} \norm{w_{t-h} - \hat{w}_{t-h}}^2
    }
    \\
    \tag{strong stability}
    &
    \le
    2 \kappa^2 \gamma^{-1} \Bbound \maxNoise \RM \sqrt{H} T^{-1}
    +
    \sqrt{\kappa^2 \gamma^{-1}
    \sum_{h=1}^{H} \norm{w_{t-h} - \hat{w}_{t-h}}^2
    }
    .
\end{align*}
Taking the square and summing over $t$, we get that
\begin{align*}
\tag{$(x+y)^2 \le 2(x^2 + y^2)$}
    \sum_{t=1}^{T} \norm{e_t}^2
    &
    \le
    4 \kappa^4 \maxNoise^2 \RM^2 \Bbound^2 H \gamma^{-2} T^{-1}
    +
    2 \kappa^2 \gamma^{-1} H \sum_{t=1}^{T} \norm{w_{t} - \hat{w}_{t}}^2
    \\
    &
    \le
    4 \kappa^4 \maxNoise^2 \RM^2 \Bbound^2 H \gamma^{-2}
    +
    2 \kappa^2 \gamma^{-1} H \wErr^2
    \\
    &
    \le
    204 \maxNoise^2 \kappa^4 \RM^2 \Bbound^2 \gamma^{-3} H^2 (\dx + \du) (\dx^2 \kappa^2 + \du \Bbound^2) \log \frac{T}{\delta}
    .
\end{align*}
Now,
by \cref{lemma:technicalParameters} we also have that 
$
\norm{(\model_\star \; I)}_F \le \Bbound \kappa \sqrt{2 \dx / \gamma}
$
and thus plugging into the first part of the proof we get that
\begin{align*}
    \tr{\Delta_t\tran V_t \Delta_t}
    &
    \le
    16 \maxNoise^2 \dx^2 \log \brk*{\frac{T}{\delta}}
    +
    4\pRegTheta \norm{\model_\star}_F^2
    +
    2\sum_{s=1}^{t-1} \norm{e_s}^2
    \\
    &
    \le
    16 \maxNoise^2 \dx^2 \log \brk*{\frac{T}{\delta}}
    +
    16 \maxNoise^2 \kappa^2 \gamma^{-1} \RM^2 \Bbound^2 H^2 \dx
    \\
    &
    +
    408 \maxNoise^2 \kappa^4 \RM^2 \Bbound^2 \gamma^{-3} H^2 (\dx + \du) (\dx^2 \kappa^2 + \du \Bbound^2) \log \frac{T}{\delta}
    \\
    &
    \le
    441 \maxNoise^2 \kappa^4 \RM^2 \Bbound^2 \gamma^{-3} H^2 (\dx + \du) (\dx^2 \kappa^2 + \du \Bbound^2) \log \frac{T}{\delta}
    ,
\end{align*}
and taking the square root concludes the second part of the proof.
Finally, we can use the above to conclude that
\begin{align*}
    \norm{\brk{\model_t \; I}}_F
    &
    \le
    \norm{\brk{\model_\star \; I}}_F
    +
    \norm{\model_t - \model_\star}_F
    \\
    &
    \le
    \norm{\brk{\model_\star \; I}}_F
    +
    \frac{1}{\sqrt{\pRegTheta}}\norm{\model_t - \model_\star}_{V_t}
    \\
    &
    \le
    \Bbound \kappa \sqrt{2 \dx / \gamma}
    +
    15 \Bbound \kappa^2 
    \sqrt{\gamma^{-3} (\dx + \du) (\dx^2 \kappa^2 + \du \Bbound^2) \log \frac{T}{\delta}}
    \\
    &
    \le
    17 \Bbound \kappa^2 
    \sqrt{\gamma^{-3} (\dx + \du) (\dx^2 \kappa^2 + \du \Bbound^2) \log \frac{T}{\delta}}
    ,
\end{align*}
as desired.

\end{proof}

\subsection{Surrogate functions}

\begin{proof}[of \cref{lemma:technicalParameters}]
First, notice that
\begin{align*}
    \norm{\brk*{\model_\star \; I}}_F^2
    &
    =
    \sum_{h=1}^{H} \tr{\Astar^{h-1}\Bstar\Bstar\tran {\Astar^{h-1}}\tran}
    +
    \tr{\Astar^{h-1}{\Astar^{h-1}}\tran}
    \\
    &
    \le
    2 \dx \Bbound^2 \sum_{h=1}^{H} \norm{\Astar^{h-1}}^2
    \\
    \tag{strong stability}
    &
    \le
    2 \dx \Bbound^2 \kappa^2 \sum_{h=1}^{H} (1-\gamma)^{2(h-1)}
    \\
    &
    \le
    2 \dx \Bbound^2 \kappa^2 \gamma^{-1}.
\end{align*}
Next, we have that
\begin{align*}
    \norm{u_t(M; \ww)}
    \le
    \sum_{h=1}^{H} \norm{M^{[h]}} \norm{w_{t-h}}
    \le
    \maxNoise \sum_{h=1}^{H} \norm{M^{[h]}}
    \le
    \maxNoise \RM \sqrt{H}
    ,
\end{align*}
where the last transition is due to Cauchy-Schwarz.
Next, we have that
\begin{align*}
    \norm{(\obs_t(M; \ww) \; w_{t})}
    =
    \sqrt{
    \sum_{h=1}^{H} \brk{\norm{u_{t+1-h}(M; \ww)}^2
    +
    \norm{w_{t+1-h}}^2}
    }
    \le
    \sqrt{2} \maxNoise \RM H
    .
\end{align*}
Next, we have that
\begin{align*}
    \norm{x_t(M; \model_\star, \ww)}
    &
    =
    \norm*{
    \sum_{h=1}^{H} \Astar^{h-1}\brk*{
    \Bstar u_{t-h}(M; \ww)
    +
    w_{t - h}}
    }
    \\
    &
    \le
    2 \kappa \Bbound \maxNoise \RM \sqrt{H}\sum_{h=1}^{H} (1-\gamma)^{h-1}
    \\
    &
    \le
    2 \kappa \Bbound \maxNoise \RM \sqrt{H} / \gamma,
\end{align*}
and $\norm{x_t},\norm{x_t^{\pi_M}}$ are bounded exactly the same but with $H = t-1$.
Next, we have that
\begin{align*}
    \norm{u_t(M; \ww) - u_t(M; \ww')}
    =
    \norm*{\sum_{h=1}^{H} M^{[h]} (w_{t-h} - w'_{t-h})}
    \le
    \RM \norm{w_{t-H:t-1} - w'_{t-H:t-1}}
    ,
\end{align*}
Finally, we have that
\begin{align*}
    &
    \sqrt{
    \norm{\obs_t(M; \ww) - \obs_t(M; \ww')}^2
    +
    \norm{w_t - w_t'}^2
    }
    \\
    &
    =
    \sqrt{
    \sum_{h=1}^{H} \brk*{
    \norm{u_{t+1-h}(M; \ww) - u_{t+1-h}(M; \ww')}^2
    +
    \norm{w_{t+1-h} - w_{t+1-h}}^2
    }
    }
    \\
    &
    \le
    \RM
    \sqrt{
    \sum_{h=1}^{H}
    \norm{w_{t+1-h-H:t+1-h} - w'_{t+1-h-H:t+1-h}}^2
    }
    \\
    &
    \le
    \RM \sqrt{H} \norm{w_{t+1-2H:t} - w_{t+1-2H:t}'}
    .
    \qedhere
\end{align*}
\end{proof}

\begin{proof}[of \cref{lemma:fbarProperties}]
First, recalling the definition of $x_t(M; \model, \ww)$ in \cref{eq:bounded-mem-rep}, we have
\begin{align*}
    x_t(M; \model, \ww)
    =
    \model_\star \obs_{t-1}(M; \ww) + w_{t-1}
    =
    \sum_{h=1}^{H} \Astar^{h-1}\brk[s]*{
    \Bstar u_{t-h}(M; \ww) + w_{t-h}}
    .
\end{align*}
Also noticing that $u_t(M;\ww)$ is $\RM$ Lipschitz in $w_{t-H:t-1}$ (\cref{lemma:technicalParameters}), we get
\begin{align*}
    &
    \norm{
    x_t(M; \model_\star, \ww)
    -
    x_t(M; \model_\star, \ww')
    }
    =
    \norm*{
    \sum_{h=1}^{H} \Astar^{h-1}\brk[s]*{
    \Bstar (u_{t-h}(M; \ww) - u_{t-h}(M; \ww'))
    +
    (w_{t - h} - w'_{t-h})}
    }
    \\
    &
    \le
    \sum_{h=1}^{H} \kappa (1-\gamma)^{h-1}\brk[s]*{
    \Bbound \norm{u_{t-h}(M; \ww) - u_{t-h}(M; \ww')}
    +
    \norm{w_{t - h} - w'_{t-h}}}
    \\
    &
    \le
    \sum_{h=1}^{H} \kappa (1-\gamma)^{h-1}\brk[s]*{
    \Bbound \RM \norm{w_{t-(h+H):t-(h+1)} - w_{t-(h+H):t-(h+1)}'}
    +
    \norm{w_{t - h} - w'_{t-h}}}
    \\
    \tag{${x} + {y} \le \sqrt{2(x^2 + y^2)}$}
    &
    \le
    \sqrt{2} \kappa \sum_{h=1}^{H}  (1-\gamma)^{h-1}
    \Bbound \RM \norm{w_{t-(h+H):t-h} - w_{t-(h+H):t-h}'}
    \\
    &
    \le
    \sqrt{2} \kappa \gamma^{-1}
    \Bbound \RM \norm{w_{t-2H:t-1} - w_{t-2H:t-1}'}
    ,
\end{align*}
where in the third inequality notice that $\norm{w_{1:t-1}}^2 + \norm{w_t}^2 = \norm{w_{1:t}}^2$. used 

Next, also using the Lipschitz assumption on $c_t$, and that $u_t$ is $\RM-$Lipschitz with respect to $w_{t-H:t-1}$ (\cref{lemma:technicalParameters}), we get that
\begin{align*}
    \|
    f_t(M; \model_\star, \ww, \zt[]) 
    &
    -
    f_t(M; \model_\star, \ww', \zt[]')
    \|
    \le
    2\costVar
    +
    \abs{f_t(M; \model_\star, \ww, \zt[]) - f_t(M; \model_\star, \ww', \zt[])}
    \\
    &=
    2\costVar
    +
    \abs{
    c_t(x_t(M; \model_\star, \ww), u_t(M; \ww))
    -
    c_t(x_t(M; \model_\star, \ww'), u_t(M; \ww'))
    }
    \\
    &
    \le
    2\costVar
    +
    \norm{
    (
    x_t(M; \model_\star, \ww) - x_t(M; \model_\star, \ww')
    ,
    u_t(M; \ww) - u_t(M; \ww')
    )
    }
    \\
    &
    \le
    2\costVar
    +
    \sqrt{3} \kappa \gamma^{-1}
    \Bbound \RM \norm{w_{t-2H:t-1} - w_{t-2H:t-1}'}
    \\
    &
    \le
    2\costVar
    +
    5 \kappa \gamma^{-1} \Bbound \RM \maxNoise \sqrt{H}
    ,
\end{align*}
where the last transition also used
$
\norm{w_{t-2H:t-1} - w'_{t-2H:t-1}}
\le
\maxNoise \sqrt{8H}
.
$

Now, also recall
$
x_t(M; \model, \ww)
= 
\model \obs_{t-1}(M; \ww) + w_{t-1}
$,
thus by \cref{lemma:technicalParameters} we have
\begin{alignat*}{2}
    &
    \norm{x_t(M; \model, \ww)}
    &&
    \le
    \sqrt{2} \maxNoise \RM H \norm{(\model \; I)}
    \\
    &
    \norm{\brk*{x_t(M; \model, \ww) \; u_t(M; \ww)}}
    &&
    \le
    \sqrt{3} \maxNoise \RM H \norm{(\model \; I)}
    \\
    &
    \norm{x_t(M; \model, \ww) - x_t(M; \model, \ww')}
    &&
    \le
    \sqrt{2H} \RM \norm{(\model \; I)} \norm{w_{t-2H:t-1} - w'_{t-2H:t-1}}
    .
\end{alignat*}
Notice that the confidence term $\pOptimism \maxNoise \norm{V^{-1/2} \obsOp(M)}_\infty$ is independent of $\ww, \zt[]$.
We thus have
\begin{align*}
    \|
    \bar{f}_t(M; \model, V, \ww, \zt[])
    &
    -
    \bar{f}_t(M; \model, V, \ww', \zt[])
    \|
    =
    \abs{
    f_t(M; \model, \ww, \zt[])
    -
    f_t(M; \model, \ww', \zt[])
    }
    \\
    &
    =
    \abs{
    c_t(x_t(M; \model, \ww), u_t(M; \ww); \zt[])
    -
    c_t(x_t(M; \model, \ww'), u_t(M; \ww'); \zt[])
    }
    \\
    &
    \le
    \norm{
    (
    x_t(M; \model, \ww) - x_t(M; \model, \ww')
    ,
    u_t(M; \ww) - u_t(M; \ww')
    )
    }
    \\
    &
    \le
    \sqrt{3H} \RM \norm{(\model \; I)} \norm{w_{t-2H:t-1} - w'_{t-2H:t-1}}
    \\
    &
    \le
    \pLipW(\model)
    \norm{w_{t-2H:t-1} - w'_{t-2H:t-1}}
    ,
\end{align*}
and thus we also have
\begin{align*}
    \abs{
    \bar{f}_t(M; \model, V, \ww, \zt[])
    -
    \bar{f}_t(M; \model, V, \ww', \zt[]')
    }
    &
    \le
    2\costVar
    +
    \abs{
    \bar{f}_t(M; \model, V, \ww, \zt[])
    -
    \bar{f}_t(M; \model, V, \ww', \zt[])
    }
    \\
    &
    \le
    2\costVar
    +
    \sqrt{3H} \RM \norm{(\model \; I)} \norm{w_{t-2H:t-1} - w'_{t-2H:t-1}}
    \\
    &
    \le
    2\costVar
    +
    5 \RM \maxNoise H \norm{(\model \; I)}
    \le
    \maxF(\model)
    ,
\end{align*}
where the last transition used 
$
\norm{w_{t-2H:t-1} - w'_{t-2H:t-1}}
\le
\maxNoise \sqrt{8H}
.
$
Finally, if 
\begin{align*}
    \norm{\brk{\model \; I}}_F
    \le
    17 \Bbound \kappa^2 
    \sqrt{\gamma^{-3} (\dx + \du) (\dx^2 \kappa^2 + \du \Bbound^2) \log \frac{12T}{\delta}}
    ,
\end{align*}
then we have that
\begin{align*}
    \pLipW(\model)
    &
    \le
    30 \RM \Bbound \kappa^2 
    \sqrt{H \gamma^{-3} (\dx + \du) (\dx^2 \kappa^2 + \du \Bbound^2) \log \frac{12T}{\delta}}
    \le
    \pOptimism / (\maxNoise \sqrt{\dx (\dx + \du) H^3})
    \\
    \maxF(\model)
    &
    \le
    2\costVar
    +
    85 \maxNoise \RM \Bbound \kappa^2 H \sqrt{\gamma^{-3} (\dx + \du) (\dx^2 \kappa^2 + \du \Bbound) \log \frac{12 T}{\delta}}
    \\
    &
    \le
    2\costVar
    +
    3 \pOptimism / (H \sqrt{\dx(\dx+\du)})
    .
    \qedhere
\end{align*}
\end{proof}

\end{document}